\newtheorem{theorem}{Theorem}
\newtheorem{lemma}[theorem]{Lemma}
\newtheorem{definition}[theorem]{Definition}
\newtheorem*{theoremA}{Theorem A}
\begin{document}

\noindent{\Large
The algebraic  classification of   nilpotent Novikov   algebras}\footnote{The authors thank
  Renato Fehlberg J\'{u}nior 
      \& 
    Crislaine Kuster   for their active stimulation to write this paper.
The first part of this work is supported by 
FCT   UIDB/MAT/00212/2020 and UIDP/MAT/00212/2020.
The second part of this work is supported by the Russian Science Foundation under grant 22-71-10001.  
}

 \bigskip

\begin{center}

 {\bf
Kobiljon Abdurasulov\footnote{Institute of Mathematics Academy of
Sciences of Uzbekistan, Tashkent, Uzbekistan; \ abdurasulov0505@mail.ru}, 
Ivan Kaygorodov\footnote{CMA-UBI, Universidade da Beira Interior, Covilh\~{a}, Portugal;   \
Moscow Center for Fundamental and Applied Mathematics, Moscow,   Russia; \
Saint Petersburg  University, Russia; \
    kaygorodov.ivan@gmail.com}
  \&
Abror Khudoyberdiyev\footnote{Institute of Mathematics Academy of
Sciences of Uzbekistan, Tashkent, Uzbekistan; \
National University of Uzbekistan, Tashkent, Uzbekistan, \ khabror@mail.ru}

}

\end{center}

 \bigskip

\noindent{\bf Abstract}:
{\it This paper is devoted to the complete algebraic classification of complex $5$-dimensional nilpotent Novikov
algebras.
}

 \bigskip

\noindent {\bf Keywords}:
{\it Novikov algebras, nilpotent algebras, algebraic classification, central extension.}

 \bigskip
 
\noindent {\bf MSC2020}: 17A30, 17D25.

 \medskip

\section*{Introduction}

One of the classical problems in the theory of non-associative algebras is to classify (up to isomorphism) the algebras of dimension $n$ from a certain variety defined by some family of polynomial identities. It is typical to focus on small dimensions, and there are two main directions for the classification: algebraic and geometric. Varieties as Jordan, Lie, Leibniz or Zinbiel algebras have been studied from these two approaches (\!\cite{kkl20,cfk18,bfk23, kkp20,klp20,  fkkv22, kv16, jkk21, kkl22  } and 
\cite{ kv16,  kkl20,ikp20,   fkkv22}, respectively).
In the present paper, we give the algebraic   classification of
$5$-dimensional nilpotent Novikov   algebras.

The variety of Novikov algebras is defined by the following identities:
\[
\begin{array}{rcl}
(xy)z &=& (xz)y,\\
(xy)z-x(yz) &=& (yx)z-y(xz).
\end{array} \]
It contains commutative associative algebras as a subvariety.
On the other hand, the variety of Novikov algebras is the intersection of
the variety of right commutative algebras (defined by the first Novikov identity)
and
the variety of left symmetric (Pre-Lie) algebras
(defined by the second Novikov identity, 
see about it \cite{bur06} and references therein).
Also, a Novikov algebra with the commutator multiplication gives a Lie algebra,
and Novikov algebras are related to
Tortken and Novikov-Poisson algebras  \cite{xu97,zerui18}.
The class of Novikov (known as  Gelfand-Dorfman-Novikov algebras) 
appeared in papers of  Gelfand ---  Dorfman \cite{gd79} and Novikov --- Balinsky \cite{bn85}.

The systematic study of Novikov algebras from algebraic view started after the paper of Zelmanov where
all complex  finite-dimensional simple Novikov algebras were classified  \cite{ze87}.
The first nontrivial examples of  infinite-dimensional simple  Novikov algebras were constructed by Filippov in \cite{fi89}.
Also, simple Novikov algebras (under some special conditions) were described in the infinite-dimensional case and over fields of positive characteristic 
in some papers by Osborn and Xu \cite{os94, xu96, xu01}.

Many other purely algebraic properties of Novikov algebras were studied in a series by papers of Dzhumadildaev \cite{di14, dt05, dz11, dl02}.
So, Dzhumadildaev and   Löfwall described the basis of free Novikov algebras \cite{dl02};
Dzhumadildaev proved  that the Novikov operad is not Koszul \cite{dz11};
Dzhumadildaev and  Ismailov  found the $S_n$-module structure of the multilinear component of degree $n$ of the $n$-generated free Novikov algebra over a field of characteristic 0 \cite{di14}.
Makar-Limanov and Umirbaev  proved  The Freiheitssatz for Novikov algebras \cite{mlu10},
and Duĭsengalieva and  Umirbaev constructed a wild automorphism of the three-generated free Novikov algebra \cite{du17}.  
Novikov central extensions of $n$-dimensional restricted polynimonial algebras are studied by Kaygorodov,
 Lopes and P\'{a}ez-Guill\'{a}n in \cite{klp20}.
Chen, Niu  and Meng gave  
 some new realizations of two Novikov algebras \cite{chen08}. 
Lebzioui studied 
  pseudo-Euclidean Novikov algebras in \cite{l20}.

Filippov proved that each Novikov nilalgebra is nilpotent \cite{fi01}.
Dzhumadildaev and  Tulenbaev proved that 
 if each left multiplication of a Novikov algebra over $K$  (char $K=p$, $p=0$ or $p > n+1$) 
 has nil-index $n$, then $A^2$ is nilpotent with nilpotency index less than or equal to $n$ \cite{dt05}.
 Shestakov and Zhang proved analogues of Itô's and Kegel's theorems for Novikov algebras \cite{zerui20}.
 Another interesting direction in the algebraic study of Novikov algebras is the description of possible Novikov structures on a certain Lie algebra \cite{bdv08,tang12}.

The algebraic classification of $3$-dimensional Novikov algebras was given in \cite{bc01},
and for some classes of $4$-dimensional algebras, it was given in \cite{bg13};
 $4$-dimensional  and one-generated $6$-dimesional complex nilpotent Novikov algebras are described in \cite{kkk19, ckkk20}, respectively. 
The geometric classification of $3$-dimensional Novikov algebras was given in \cite{bb14} and of
$4$-dimensional nilpotent Novikov algebras in \cite{kkk19}.

Our method for classifying nilpotent  Novikov  algebras is based on the calculation of central extensions of nilpotent algebras of smaller dimensions from the same variety. 
The algebraic study of central extensions of   algebras has been an important topic for years \cite{  klp20,hac16,  ss78}.
First, Skjelbred and Sund used central extensions of Lie algebras to obtain a classification of nilpotent Lie algebras  \cite{ss78}.
Note that the Skjelbred-Sund method of central extensions is an important tool in the classification of nilpotent algebras.
Using the same method,  
 small dimensional nilpotent 
(associative, 
 terminal  \cite{kkp20}, Jordan,
  Lie, 
 anticommutative  \cite{kkl20}) algebras,
and some others have been described. Our main results related to the algebraic classification of the  variety of Novikov algebras are summarized below.

\begin{theoremA}
Up to isomorphism, there are infinitely many isomorphism classes of  
complex  non-split non-one-generated $5$-dimensional   nilpotent (non-2-step nilpotent) non-commutative  Novikov  algebras, 
described explicitly  in  section \ref{secteoA} in terms of 
$82$ one-parameter families, $27$ two-parameter families, $5$ three-parameter families and 
$104$ additional isomorphism classes.
\end{theoremA}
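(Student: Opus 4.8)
The plan is to run the Skjelbred--Sund central extension procedure \cite{ss78} inside the variety of Novikov algebras, with the known list of complex nilpotent Novikov algebras of dimension at most $4$ \cite{kkk19} serving as the set of seeds. The underlying reduction is the usual one: a complex nilpotent Novikov algebra $\mathbf{A}$ of dimension $5$ that is not split (not a direct sum of two nonzero ideals) has $\mathrm{Ann}(\mathbf{A})\neq 0$, and, writing $s=\dim\mathrm{Ann}(\mathbf{A})$, one has $\mathbf{A}\cong A_\theta$ where $A$ is a nilpotent Novikov algebra of dimension $5-s\le 4$, $\theta$ is a $2$-cocycle on $A$ with values in an $s$-dimensional space, and $\mathrm{rad}(\theta)\cap\mathrm{Ann}(A)=0$; this last condition records that $\mathbf{A}$ genuinely appears at this step and is not produced from a smaller seed. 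Here a scalar Novikov $2$-cocycle is a bilinear map $\theta\colon A\times A\to\mathbb{C}$ satisfying $\theta(xy,z)=\theta(xz,y)$ and $\theta(xy,z)-\theta(x,yz)=\theta(yx,z)-\theta(y,xz)$ for all $x,y,z\in A$, a coboundary is $\delta f$ with $\delta f(x,y)=f(xy)$ for $f\in A^\ast$, and $H^2(A)=Z^2(A)/B^2(A)$; the vector-valued case is handled componentwise. Since $A_\theta\cong A_{\theta'}$ exactly when the corresponding subspaces of $H^2(A)$ lie in one orbit of the natural $\mathrm{Aut}(A)$-action, the whole classification turns into an orbit problem over the finitely many seeds of dimension $\le 4$; and because Theorem~A discards the $2$-step nilpotent, commutative and one-generated algebras, one checks that only seeds of dimension $3$ and $4$ actually contribute.

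For each seed $A$ one carries out four steps: (i) solve the linear system cutting out $Z^2(A)$ in a fixed basis; (ii) compute $B^2(A)$, hence $H^2(A)$, recording how $\mathrm{Ann}(A)$ sits in $A$ for the radical test; (iii) determine $\mathrm{Aut}(A)$ as an algebraic group together with its linear action on $H^2(A)$; (iv) pick a transversal for the $\mathrm{Aut}(A)$-orbits of those subspaces $\langle\theta\rangle\subseteq H^2(A)$ with $\mathrm{rad}(\theta)\cap\mathrm{Ann}(A)=0$. Each orbit representative gives a multiplication table for $A_\theta$, and collecting these over all seeds produces a preliminary list of $5$-dimensional nilpotent Novikov algebras. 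One then removes the algebras excluded from Theorem~A --- the split ones, the commutative ones, the $2$-step nilpotent ones, and the one-generated ones (classified separately, cf.\ \cite{ckkk20}) --- and eliminates the remaining isomorphisms, including coincidences between algebras obtained from different seeds or from different representatives. What survives is the list recorded in section~\ref{secteoA}, organised into the $82$ one-parameter, $27$ two-parameter and $5$ three-parameter families together with the $104$ sporadic classes.

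The parametric families are precisely the orbits of subspaces of $H^2(A)$ on which $\mathrm{Aut}(A)$ acts with a continuous invariant, so that the remaining structure constants cannot all be normalised; the existence of even one genuine one-parameter family already forces infinitely many isomorphism classes. I expect the difficulty to be concentrated in two places. The first is sheer volume: there are many seeds, several with large $H^2(A)$, and the orbit enumeration has to be organised carefully (with computer-algebra support) to be simultaneously exhaustive and free of repetitions. The second, and the genuinely delicate one, is the negative part --- proving that the listed families are pairwise non-isomorphic and contain no hidden identifications at special parameter values. This is handled by stratifying the candidates according to isomorphism invariants such as $\dim\mathbf{A}^2$, $\dim\mathbf{A}^3$, $\dim\mathrm{Ann}(\mathbf{A})$, the nilpotency index, the isomorphism type of the associated graded algebra, and the ranks of left and right multiplications, and then, within each stratum, writing out the finitely many admissible base changes to decide which parameter values (if any) get identified. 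It is this last bookkeeping, rather than the cocycle computations themselves, where the real work lies.
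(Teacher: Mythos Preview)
Your proposal is correct and takes essentially the same approach as the paper: the Skjelbred--Sund central extension procedure over the known seeds of dimension $3$ and $4$, with the orbit enumeration in $H^2$ under $\mathrm{Aut}(A)$ furnishing the list and the quotient-by-annihilator argument separating extensions of distinct seeds. The paper carries this out seed by seed exactly as you describe, the only notational difference being that what you call $\mathrm{rad}(\theta)$ is written $\mathrm{Ann}(\theta)$ there.
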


 \newpage

\section{The algebraic classification of nilpotent Novikov algebras}
\subsection{Method of classification of nilpotent algebras}

Throughout this paper, we use the notations and methods well written in \cite{hac16},
which we have adapted for the Novikov case with some modifications.
Further in this section, we give some important definitions.

Let $({\bf A}, \cdot)$ be a Novikov  algebra over  $\mathbb C$
and $\mathbb V$ a vector space over ${\mathbb C}$. The $\mathbb C$-linear space ${\rm Z^{2}}\left(
\bf A,\mathbb V \right) $ is defined as the set of all  bilinear maps $\theta  \colon {\bf A} \times {\bf A} \longrightarrow {\mathbb V}$ such that

\begin{longtable}{rcl}
$\theta(xy,z)$&$=$&$\theta(xz,y),$\\
$\theta(xy,z)-\theta(x,yz)$&$=$&$ \theta(yx,z)-\theta(y,xz).$
\end{longtable}

These elements will be called {\it cocycles}. For a
linear map $f$ from $\bf A$ to  $\mathbb V$, if we define $\delta f\colon {\bf A} \times
{\bf A} \longrightarrow {\mathbb V}$ by $\delta f  (x,y ) =f(xy )$, then $\delta f\in {\rm Z^{2}}\left( {\bf A},{\mathbb V} \right) $. We define ${\rm B^{2}}\left({\bf A},{\mathbb V}\right) =\left\{ \theta =\delta f\ : f\in {\rm Hom}\left( {\bf A},{\mathbb V}\right) \right\} $.
We define the {\it second cohomology space} ${\rm H^{2}}\left( {\bf A},{\mathbb V}\right) $ as the quotient space ${\rm Z^{2}}
\left( {\bf A},{\mathbb V}\right) \big/{\rm B^{2}}\left( {\bf A},{\mathbb V}\right) $.

\

Let $\operatorname{Aut}({\bf A}) $ be the automorphism group of  ${\bf A} $ and let $\phi \in \operatorname{Aut}({\bf A})$. For $\theta \in
{\rm Z^{2}}\left( {\bf A},{\mathbb V}\right) $ define  the action of the group $\operatorname{Aut}({\bf A}) $ on ${\rm H^{2}}\left( {\bf A},{\mathbb V}\right) $ by $\phi \theta (x,y)
=\theta \left( \phi \left( x\right) ,\phi \left( y\right) \right) $.  It is easy to verify that
 ${\rm B^{2}}\left( {\bf A},{\mathbb V}\right) $ is invariant under the action of $\operatorname{Aut}({\bf A}).$  
 So, we have an induced action of  $\operatorname{Aut}({\bf A})$  on ${\rm H^{2}}\left( {\bf A},{\mathbb V}\right)$.

\

Let $\bf A$ be a Novikov  algebra of dimension $m$ over  $\mathbb C$ and ${\mathbb V}$ be a $\mathbb C$-vector
space of dimension $k$. For $\theta \in {\rm Z^{2}}\left(
{\bf A},{\mathbb V}\right) $, define on the linear space ${\bf A}_{\theta } = {\bf A}\oplus {\mathbb V}$ the
bilinear product `` $\left[ -,-\right] _{{\bf A}_{\theta }}$'' by $\left[ x+x^{\prime },y+y^{\prime }\right] _{{\bf A}_{\theta }}=
 xy +\theta(x,y) $ for all $x,y\in {\bf A},x^{\prime },y^{\prime }\in {\mathbb V}$.
The algebra ${\bf A}_{\theta }$ is called an $k$-{\it dimensional central extension} of ${\bf A}$ by ${\mathbb V}$. One can easily check that ${\bf A_{\theta}}$ is a Novikov 
algebra if and only if $\theta \in {\rm Z^2}({\bf A}, {\mathbb V})$.

Call the
set $\operatorname{Ann}(\theta)=\left\{ x\in {\bf A}:\theta \left( x, {\bf A} \right)+ \theta \left({\bf A} ,x\right) =0\right\} $
the {\it annihilator} of $\theta $. We recall that the {\it annihilator} of an  algebra ${\bf A}$ is defined as
the ideal $\operatorname{Ann}(  {\bf A} ) =\left\{ x\in {\bf A}:  x{\bf A}+ {\bf A}x =0\right\}$. Observe
 that
$\operatorname{Ann}\left( {\bf A}_{\theta }\right) =(\operatorname{Ann}(\theta) \cap\operatorname{Ann}({\bf A}))
 \oplus {\mathbb V}$.

\

The following result shows that every algebra with a non-zero annihilator is a central extension of a smaller-dimensional algebra.

\begin{lemma}
Let ${\bf A}$ be an $n$-dimensional Novikov algebra such that $\dim (\operatorname{Ann}({\bf A}))=m\neq0$. Then there exists, up to isomorphism, a unique $(n-m)$-dimensional Novikov  algebra ${\bf A}'$ and a bilinear map $\theta \in {\rm Z^2}({\bf A}, {\mathbb V})$ with $\operatorname{Ann}({\bf A})\cap\operatorname{Ann}(\theta)=0$, where $\mathbb V$ is a vector space of dimension m, such that ${\bf A} \cong {{\bf A}'}_{\theta}$ and
 ${\bf A}/\operatorname{Ann}({\bf A})\cong {\bf A}'$.
\end{lemma}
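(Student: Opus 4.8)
The plan is to construct $\mathbf{A}'$ as the quotient $\mathbf{A}/\operatorname{Ann}(\mathbf{A})$ and to manufacture the cocycle $\theta$ from a choice of linear complement. First I would fix a subspace $\mathbb{V} \subseteq \mathbf{A}$ which is a complement to $\operatorname{Ann}(\mathbf{A})$ inside the annihilator is too small — rather, I would work the other way: choose any linear complement $U$ of $\operatorname{Ann}(\mathbf{A})$ in $\mathbf{A}$, so that $\mathbf{A} = U \oplus \operatorname{Ann}(\mathbf{A})$ as vector spaces, and identify $\mathbf{A}' := \mathbf{A}/\operatorname{Ann}(\mathbf{A})$ with $U$ via the canonical projection $\pi$. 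Since $\operatorname{Ann}(\mathbf{A})$ is an ideal, $\mathbf{A}'$ inherits a Novikov algebra structure, which is exactly the restriction of the product of $\mathbf{A}$ to $U$ followed by projection back onto $U$. Write $\mathbb{V} := \operatorname{Ann}(\mathbf{A})$, a vector space of dimension $m$.

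Next I would define $\theta \colon \mathbf{A}' \times \mathbf{A}' \to \mathbb{V}$ by decomposing the product in $\mathbf{A}$: for $x,y \in U$, write $xy = (xy)_U + (xy)_{\mathbb V}$ according to the decomposition $\mathbf{A} = U \oplus \mathbb{V}$, and set $\theta(x,y) := (xy)_{\mathbb V}$, while the product on $\mathbf{A}'$ is $x \ast y := (xy)_U$. Then by construction the original product on $\mathbf{A}$ is recovered as $xy = x\ast y + \theta(x,y)$ for $x,y\in U$, and since $\mathbb{V} = \operatorname{Ann}(\mathbf{A})$ kills everything, this identification shows $\mathbf{A} \cong \mathbf{A}'_{\theta}$ as algebras. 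That $\theta \in \mathrm{Z}^2(\mathbf{A}',\mathbb{V})$ follows from the remark in the excerpt that $\mathbf{A}'_\theta$ is Novikov iff $\theta$ is a cocycle, together with the fact that $\mathbf{A}$ itself is Novikov; alternatively one checks the two cocycle identities directly by projecting the Novikov identities of $\mathbf{A}$ onto $\mathbb{V}$. The condition $\operatorname{Ann}(\mathbf{A}) \cap \operatorname{Ann}(\theta) = 0$ has to be verified: if $x \in U$ lies in $\operatorname{Ann}(\theta)$, then $\theta(x,\mathbf{A}') = \theta(\mathbf{A}',x) = 0$, meaning $xy, yx \in U$ for all $y$; combined with $x \notin \operatorname{Ann}(\mathbf{A})$ unless $x=0$ — here one must be a little careful, since a priori $x$ could multiply nontrivially into $U$. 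The correct statement to aim for is $\operatorname{Ann}(\mathbf{A}') \cap \operatorname{Ann}(\theta) = 0$ (matching the general theory), and this holds because an element of $U$ in both annihilators would annihilate all of $\mathbf{A} = U \oplus \mathbb{V}$, hence lie in $\operatorname{Ann}(\mathbf{A}) \cap U = 0$.

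For uniqueness, suppose $\mathbf{A} \cong \mathbf{B}_\vartheta$ for another $(n-m)$-dimensional Novikov algebra $\mathbf{B}$ and cocycle $\vartheta$ with $\operatorname{Ann}(\mathbf{B}) \cap \operatorname{Ann}(\vartheta) = 0$. Using $\operatorname{Ann}(\mathbf{B}_\vartheta) = (\operatorname{Ann}(\vartheta) \cap \operatorname{Ann}(\mathbf{B})) \oplus \mathbb{W}$ from the observation preceding the lemma, the hypothesis forces $\operatorname{Ann}(\mathbf{B}_\vartheta) = \mathbb{W}$, the extending space. Then the isomorphism $\mathbf{A} \cong \mathbf{B}_\vartheta$ carries $\operatorname{Ann}(\mathbf{A})$ onto $\mathbb{W}$, so it descends to an isomorphism $\mathbf{A}/\operatorname{Ann}(\mathbf{A}) \cong \mathbf{B}_\vartheta/\mathbb{W} \cong \mathbf{B}$; this gives both $\mathbf{B} \cong \mathbf{A}'$ and, after transporting $\vartheta$ along this isomorphism, the agreement of the cocycles up to the action of the automorphism group and a coboundary. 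I expect the main obstacle to be the bookkeeping in this last step: pinning down exactly in what sense $\theta$ is "unique" (it is unique only up to the $\operatorname{Aut}(\mathbf{A}')$-action and addition of coboundaries, i.e. as needed for the subsequent classification) and checking that the isomorphism of extensions is compatible with both the algebra structures and the distinguished subspaces, rather than the essentially formal construction of $\theta$ in the first half.
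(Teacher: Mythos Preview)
Your construction is exactly the paper's: pick a linear complement $U$ of $\operatorname{Ann}(\mathbf A)$, define the product on $\mathbf A'\cong U$ by projecting onto $U$, and let $\theta$ be the projection onto $\mathbb V=\operatorname{Ann}(\mathbf A)$; the paper phrases this via the map $P(x+v)=x$ and $[x,y]_{\mathbf A'}=P(xy)$, $\theta(x,y)=xy-[x,y]_{\mathbf A'}$, which is your $(xy)_U$ and $(xy)_{\mathbb V}$ verbatim. Two small remarks: first, your suspicion about the annihilator condition is well founded---the condition that actually matters for the later theory is $\operatorname{Ann}(\mathbf A')\cap\operatorname{Ann}(\theta)=0$, and your argument for it is correct (in the paper's literal reading, $\operatorname{Ann}(\theta)\subseteq U$ and $\operatorname{Ann}(\mathbf A)\cap U=0$, so the stated intersection vanishes trivially); second, the uniqueness asserted in the lemma is only that of $\mathbf A'$, which follows immediately from $\mathbf A'\cong\mathbf A/\operatorname{Ann}(\mathbf A)$, so your final paragraph establishing compatibility of the cocycles goes beyond what is required here.
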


\begin{proof}
Let ${\bf A}'$ be a linear complement of $\operatorname{Ann}({\bf A})$ in ${\bf A}$. Define a linear map $P \colon {\bf A} \longrightarrow {\bf A}'$ by $P(x+v)=x$ for $x\in {\bf A}'$ and $v\in\operatorname{Ann}({\bf A})$, and define a multiplication on ${\bf A}'$ by $[x, y]_{{\bf A}'}=P(x y)$ for $x, y \in {\bf A}'$.
For $x, y \in {\bf A}$, we have
\[P(xy)=P((x-P(x)+P(x))(y- P(y)+P(y)))=P(P(x) P(y))=[P(x), P(y)]_{{\bf A}'}. \]

Since $P$ is a homomorphism $P({\bf A})={\bf A}'$ is a Novikov algebra and
 ${\bf A}/\operatorname{Ann}({\bf A})\cong {\bf A}'$, which gives us the uniqueness. Now, define the map $\theta \colon {\bf A}' \times {\bf A}' \longrightarrow\operatorname{Ann}({\bf A})$ by $\theta(x,y)=xy- [x,y]_{{\bf A}'}$.
  Thus, ${\bf A}'_{\theta}$ is ${\bf A}$ and therefore $\theta \in {\rm Z^2}({\bf A}, {\mathbb V})$ and $\operatorname{Ann}({\bf A})\cap\operatorname{Ann}(\theta)=0$.
\end{proof}
 
\begin{definition}
Let ${\bf A}$ be an algebra and $I$ be a subspace of $\operatorname{Ann}({\bf A})$. If ${\bf A}={\bf A}_0 \oplus I$
then $I$ is called an {\it annihilator component} of ${\bf A}$.
\end{definition}
\begin{definition}
A central extension of an algebra $\bf A$ without annihilator component is called a {\it non-split central extension}.
\end{definition}

Our task is to find all central extensions of an algebra $\bf A$ by
a space ${\mathbb V}$.  In order to solve the isomorphism problem we need to study the
action of $\operatorname{Aut}({\bf A})$ on ${\rm H^{2}}\left( {\bf A},{\mathbb V}
\right) $. To do that, let us fix a basis $e_{1},\ldots ,e_{s}$ of ${\mathbb V}$, and $
\theta \in {\rm Z^{2}}\left( {\bf A},{\mathbb V}\right) $. Then $\theta $ can be uniquely
written as $\theta \left( x,y\right) =
\displaystyle \sum_{i=1}^{s} \theta _{i}\left( x,y\right) e_{i}$, where $\theta _{i}\in
{\rm Z^{2}}\left( {\bf A},\mathbb C\right) $. Moreover, $\operatorname{Ann}(\theta)=\operatorname{Ann}(\theta _{1})\cap\operatorname{Ann}(\theta _{2})\cap\cdots \cap\operatorname{Ann}(\theta _{s})$. Furthermore, $\theta \in
{\rm B^{2}}\left( {\bf A},{\mathbb V}\right) $\ if and only if all $\theta _{i}\in {\rm B^{2}}\left( {\bf A},
\mathbb C\right) $.
It is not difficult to prove (see \cite[Lemma 13]{hac16}) that given a Novikov algebra ${\bf A}_{\theta}$, if we write as
above $\theta \left( x,y\right) = \displaystyle \sum_{i=1}^{s} \theta_{i}\left( x,y\right) e_{i}\in {\rm Z^{2}}\left( {\bf A},{\mathbb V}\right) $ and 
$\operatorname{Ann}(\theta)\cap \operatorname{Ann}\left( {\bf A}\right) =0$, then ${\bf A}_{\theta }$ has an
annihilator component if and only if $\left[ \theta _{1}\right] ,\left[
\theta _{2}\right] ,\ldots ,\left[ \theta _{s}\right] $ are linearly
dependent in ${\rm H^{2}}\left( {\bf A},\mathbb C\right) $.

Let ${\mathbb V}$ be a finite-dimensional vector space over $\mathbb C$. The {\it Grassmannian} $G_{k}\left( {\mathbb V}\right) $ is the set of all $k$-dimensional
linear subspaces of $ {\mathbb V}$. Let $G_{s}\left( {\rm H^{2}}\left( {\bf A},\mathbb C\right) \right) $ be the Grassmannian of subspaces of dimension $s$ in
${\rm H^{2}}\left( {\bf A},\mathbb C\right) $. There is a natural action of $\operatorname{Aut}({\bf A})$ on $G_{s}\left( {\rm H^{2}}\left( {\bf A},\mathbb C\right) \right) $.
Let $\phi \in \operatorname{Aut}({\bf A})$. For $W=\left\langle
\left[ \theta _{1}\right] ,\left[ \theta _{2}\right] ,\dots,\left[ \theta _{s}
\right] \right\rangle \in G_{s}\left( {\rm H^{2}}\left( {\bf A},\mathbb C
\right) \right) $ define $\phi W=\left\langle \left[ \phi \theta _{1}\right]
,\left[ \phi \theta _{2}\right] ,\dots,\left[ \phi \theta _{s}\right]
\right\rangle $. We denote the orbit of $W\in G_{s}\left(
{\rm H^{2}}\left( {\bf A},\mathbb C\right) \right) $ under the action of $\operatorname{Aut}({\bf A})$ by $\operatorname{Orb}(W)$. Given
\[
W_{1}=\left\langle \left[ \theta _{1}\right] ,\left[ \theta _{2}\right] ,\dots,
\left[ \theta _{s}\right] \right\rangle ,W_{2}=\left\langle \left[ \vartheta
_{1}\right] ,\left[ \vartheta _{2}\right] ,\dots,\left[ \vartheta _{s}\right]
\right\rangle \in G_{s}\left( {\rm H^{2}}\left( {\bf A},\mathbb C\right)
\right),
\]
we easily have that if $W_{1}=W_{2}$, then $ \bigcap\limits_{i=1}^{s}\operatorname{Ann}(\theta _{i})\cap \operatorname{Ann}\left( {\bf A}\right) = \bigcap\limits_{i=1}^{s}
\operatorname{Ann}(\vartheta _{i})\cap\operatorname{Ann}( {\bf A}) $, and therefore we can introduce
the set
\[
{\bf T}_{s}({\bf A}) =\left\{ W=\left\langle \left[ \theta _{1}\right] ,
\left[ \theta _{2}\right] ,\dots,\left[ \theta _{s}\right] \right\rangle \in
G_{s}\left( {\rm H^{2}}\left( {\bf A},\mathbb C\right) \right) : \bigcap\limits_{i=1}^{s}\operatorname{Ann}(\theta _{i})\cap\operatorname{Ann}({\bf A}) =0\right\},
\]
which is stable under the action of $\operatorname{Aut}({\bf A})$.

Now, let ${\mathbb V}$ be an $s$-dimensional linear space and let us denote by
${\bf E}\left( {\bf A},{\mathbb V}\right) $ the set of all {\it non-split $s$-dimensional central extensions} of ${\bf A}$ by
${\mathbb V}$. By above, we can write
\[
{\bf E}\left( {\bf A},{\mathbb V}\right) =\left\{ {\bf A}_{\theta }:\theta \left( x,y\right) = \sum_{i=1}^{s}\theta _{i}\left( x,y\right) e_{i} \ \ \text{and} \ \ \left\langle \left[ \theta _{1}\right] ,\left[ \theta _{2}\right] ,\dots,
\left[ \theta _{s}\right] \right\rangle \in {\bf T}_{s}({\bf A}) \right\} .
\]
We also have the following result, which can be proved as in \cite[Lemma 17]{hac16}.

\begin{lemma}
 Let ${\bf A}_{\theta },{\bf A}_{\vartheta }\in {\bf E}\left( {\bf A},{\mathbb V}\right) $. Suppose that $\theta \left( x,y\right) =  \displaystyle \sum_{i=1}^{s}
\theta _{i}\left( x,y\right) e_{i}$ and $\vartheta \left( x,y\right) =
\displaystyle \sum_{i=1}^{s} \vartheta _{i}\left( x,y\right) e_{i}$.
Then the Novikov algebras ${\bf A}_{\theta }$ and ${\bf A}_{\vartheta } $ are isomorphic
if and only if
$$\operatorname{Orb}\left\langle \left[ \theta _{1}\right] ,
\left[ \theta _{2}\right] ,\dots,\left[ \theta _{s}\right] \right\rangle =
\operatorname{Orb}\left\langle \left[ \vartheta _{1}\right] ,\left[ \vartheta
_{2}\right] ,\dots,\left[ \vartheta _{s}\right] \right\rangle .$$
\end{lemma}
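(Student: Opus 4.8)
The plan is to follow the pattern of the analogous statement in the general non-associative framework, as in \cite[Lemma 17]{hac16}, and check that nothing goes wrong in the Novikov case. The key preliminary observation is that for every ${\bf A}_{\theta}\in{\bf E}({\bf A},{\mathbb V})$ one has $\operatorname{Ann}({\bf A}_{\theta})={\mathbb V}$. Indeed, by the observation recorded earlier $\operatorname{Ann}({\bf A}_{\theta})=(\operatorname{Ann}(\theta)\cap\operatorname{Ann}({\bf A}))\oplus{\mathbb V}$, and $\operatorname{Ann}(\theta)=\bigcap_{i=1}^{s}\operatorname{Ann}(\theta_{i})$, so $\operatorname{Ann}(\theta)\cap\operatorname{Ann}({\bf A})=0$ precisely because $\langle[\theta_{1}],\dots,[\theta_{s}]\rangle\in{\bf T}_{s}({\bf A})$. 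Since any algebra isomorphism carries the annihilator onto the annihilator, every isomorphism $\Phi\colon{\bf A}_{\theta}\to{\bf A}_{\vartheta}$ must satisfy $\Phi({\mathbb V})={\mathbb V}$; hence $\Phi$ descends to an isomorphism of the quotients, and under the canonical identifications ${\bf A}_{\theta}/{\mathbb V}\cong{\bf A}\cong{\bf A}_{\vartheta}/{\mathbb V}$ (the induced products are just the product of ${\bf A}$) this is an automorphism $\phi\in\operatorname{Aut}({\bf A})$. Consequently $\Phi$ has block-triangular form $\Phi(a+v)=\phi(a)+\psi(a)+\chi(v)$ for $a\in{\bf A}$, $v\in{\mathbb V}$, where $\psi\colon{\bf A}\to{\mathbb V}$ is linear and $\chi:=\Phi|_{{\mathbb V}}\in GL({\mathbb V})$.

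For the implication ``$\Rightarrow$'', I would expand $\Phi([a,b]_{{\bf A}_{\theta}})=[\Phi(a),\Phi(b)]_{{\bf A}_{\vartheta}}$ for $a,b\in{\bf A}$, using that ${\mathbb V}$ is central in both extensions. Comparing ${\bf A}$-components merely reconfirms that $\phi$ is multiplicative; comparing ${\mathbb V}$-components gives $\chi(\theta(a,b))=\vartheta(\phi(a),\phi(b))-\psi(ab)$, that is, $\chi\circ\theta=\phi\vartheta-\delta\psi$ in ${\rm Z}^{2}({\bf A},{\mathbb V})$. Writing $\chi(e_{j})=\sum_{i}c_{ij}e_{i}$ and $\psi=\sum_{i}\psi_{i}e_{i}$ and reading this identity off the basis $e_{1},\dots,e_{s}$ of ${\mathbb V}$ yields $\sum_{j}c_{ij}\theta_{j}=\phi\vartheta_{i}-\delta\psi_{i}$ in ${\rm Z}^{2}({\bf A},{\mathbb C})$ for every $i$, hence $\phi[\vartheta_{i}]=\sum_{j}c_{ij}[\theta_{j}]$ in ${\rm H}^{2}({\bf A},{\mathbb C})$ (the coboundary terms die). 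Thus $\phi\langle[\vartheta_{1}],\dots,[\vartheta_{s}]\rangle\subseteq\langle[\theta_{1}],\dots,[\theta_{s}]\rangle$, and since $(c_{ij})$ is invertible (being the matrix of $\chi$) this inclusion is an equality, so the two orbits coincide.

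For the implication ``$\Leftarrow$'', I would reverse this computation. Assuming $\phi\langle[\vartheta_{1}],\dots,[\vartheta_{s}]\rangle=\langle[\theta_{1}],\dots,[\theta_{s}]\rangle$ for some $\phi\in\operatorname{Aut}({\bf A})$, both $(\phi[\vartheta_{i}])_{i}$ and $([\theta_{j}])_{j}$ are bases of the same $s$-dimensional subspace of ${\rm H}^{2}({\bf A},{\mathbb C})$, so $\phi[\vartheta_{i}]=\sum_{j}c_{ij}[\theta_{j}]$ for an invertible matrix $(c_{ij})\in GL_{s}({\mathbb C})$; choose linear maps $\psi_{i}\colon{\bf A}\to{\mathbb C}$ with $\phi\vartheta_{i}-\sum_{j}c_{ij}\theta_{j}=\delta\psi_{i}$, and set $\chi(e_{j})=\sum_{i}c_{ij}e_{i}$, $\psi=\sum_{i}\psi_{i}e_{i}$, $\Phi(a+v)=\phi(a)+\psi(a)+\chi(v)$. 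Then $\Phi$ is a linear bijection (its matrix with respect to the decomposition ${\bf A}\oplus{\mathbb V}$ is block lower-triangular with the invertible blocks $\phi$ and $\chi$ on the diagonal), and the computation of the previous paragraph, read backwards, shows that it is an algebra homomorphism ${\bf A}_{\theta}\to{\bf A}_{\vartheta}$, so ${\bf A}_{\theta}\cong{\bf A}_{\vartheta}$.

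Everything is purely formal; the only real obstacle is bookkeeping. One must keep straight the double role of $(c_{ij})$ — simultaneously the matrix of $\chi$ on ${\mathbb V}$ and the transition matrix from $(\phi[\vartheta_{i}])_{i}$ to $([\theta_{j}])_{j}$ in ${\rm H}^{2}({\bf A},{\mathbb C})$ — and notice that the coboundary error terms $\delta\psi_{i}$ disappear in cohomology exactly because $\delta\psi$ is ${\mathbb V}$-valued and ${\mathbb V}$ lies in the annihilator of the extension. The Novikov-specific content (that ${\rm Z}^{2}$, ${\rm B}^{2}$, the $\operatorname{Aut}({\bf A})$-action on ${\rm H}^{2}$, and the annihilator decomposition $\operatorname{Ann}({\bf A}_{\theta})=(\operatorname{Ann}(\theta)\cap\operatorname{Ann}({\bf A}))\oplus{\mathbb V}$ behave as stated) has already been set up in the preceding paragraphs, so nothing new has to be verified there.
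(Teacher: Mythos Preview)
Your proposal is correct and is precisely the argument the paper defers to: the paper does not prove the lemma itself but states that it ``can be proved as in \cite[Lemma 17]{hac16}'', and you have faithfully reproduced that proof, with the Novikov-specific ingredients (the form of ${\rm Z}^2$, ${\rm B}^2$, the $\operatorname{Aut}({\bf A})$-action, and the annihilator decomposition) already supplied by the surrounding text.
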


This shows that there exists a one-to-one correspondence between the set of $\operatorname{Aut}({\bf A})$-orbits on ${\bf T}_{s}\left( {\bf A}\right) $ and the set of
isomorphism classes of ${\bf E}\left( {\bf A},{\mathbb V}\right) $. Consequently, we have a
procedure that allows us, given a Novikov algebra ${\bf A}'$ of
dimension $n-s$, to construct all non-split central extensions of ${\bf A}'$. This procedure is:

{\centerline {\textsl{Procedure}}}

\begin{enumerate}
\item For a given Novikov algebra ${\bf A}'$ of dimension $n-s $, determine ${\rm H^{2}}( {\bf A}',\mathbb {C}) $, $\operatorname{Ann}({\bf A}')$ and $\operatorname{Aut}({\bf A}')$.

\item Determine the set of $\operatorname{Aut}({\bf A}')$-orbits on $T_{s}({\bf A}') $.

\item For each orbit, construct the Novikov algebra associated with a
representative of it.
\end{enumerate}

\subsection{Notations}
Let ${\bf A}$ be a Novikov algebra with
a basis $e_{1},e_{2},\dots,e_{n}$. Then by $\Delta _{ij}$\ we denote the
bilinear form
$\Delta _{ij} \colon {\bf A}\times {\bf A}\longrightarrow \mathbb C$
with $\Delta _{ij}\left( e_{l},e_{m}\right) = \delta_{il}\delta_{jm}$.
Then the set $\left\{ \Delta_{ij}:1\leq i, j\leq n\right\} $ is a basis for the space of
the bilinear forms on ${\bf A}$. Then every $\theta \in
{\rm Z^{2}}\left( {\bf A},\mathbb C\right) $ can be uniquely written as $
\theta = \displaystyle \sum_{1\leq i,j\leq n} c_{ij}\Delta _{{i}{j}}$, where $
c_{ij}\in \mathbb C$.
Let us fix the following notations:

$$\begin{array}{lll}
{\mathcal N}^{i*}_j& \mbox{---}& j \mbox{th }i\mbox{-dimensional nilpotent  Novikov algebra with identity $xyz=0$} \\
{\mathcal N}^i_j& \mbox{---}& j \mbox{th }i\mbox{-dimensional nilpotent "pure" Novikov algebra (without identity $xyz=0$)} \\
{\mathfrak{N}}_i& \mbox{---}& i\mbox{th }4\mbox{-dimensional $2$-step nilpotent algebra} \\
{\rm{N}}_i& \mbox{---}& i\mbox{th  non-split non-one-generated }  
5\mbox{-dimensional 
 nilpotent} \\ 
&&\mbox{(non-$2$-step nilpotent) non-commutative Novikov algebra} \\

\end{array}$$

\subsection{$1$-dimensional central extensions of 
$4$-dimensional $2$-step nilpotent  Novikov algebras}

\subsubsection{The description of second cohomology space}
In the following table, we give the description of the second cohomology space of   $4$-dimensional $2$-step nilpotent Novikov algebras.

\begin{longtable}{ll llllll}
\hline

{${\mathfrak N}_{01}$} $:$ \quad  $e_1e_1 = e_2$ \\ 
\multicolumn{8}{l}{
${\rm H}^2_{com}({\mathfrak N}_{01})=
\Big\langle  
[\Delta_{12}+\Delta_{21}],[\Delta_{13}+\Delta_{31}],[\Delta_{14}+\Delta_{41}],[\Delta_{33}],[\Delta_{34}+\Delta_{43}],[\Delta_{44}]
\Big\rangle $}\\
 
${\rm H}^2({\mathfrak N}_{01})={\rm H}^2_{com}({\mathcal N}^4_{01})\oplus \langle [\Delta _{21}], [\Delta _{31}],[\Delta _{41}],[\Delta _{43}]\rangle$ \\

\hline
${\mathfrak N}_{02}$ $:$ \quad $e_1e_1 =e_3$ \quad $e_2e_2=e_4$  \\ 

\multicolumn{8}{l}{
${\rm H}^2_{com}({\mathfrak N}_{02})=
\Big\langle  
[\Delta_{12}+\Delta_{21}],[\Delta_{13}+\Delta_{31}],[\Delta_{24}+\Delta_{42}]\Big\rangle $}\\
 
${\rm H}^2({\mathfrak N}_{01})={\rm H}^2_{com}({\mathcal N}_{01})\oplus \langle [\Delta _{21}], [\Delta _{31}],[\Delta _{42}]\rangle$ \\

\hline
{${\mathfrak N}_{03}$} $:$ \quad  $e_1e_2=e_3$ \quad $e_2e_1=-e_3$  \\ 

\multicolumn{8}{l}{
${\rm H}^2({\mathfrak N}_{03})=
\Big\langle 
[\Delta_{11}],[\Delta_{14}],[\Delta_{21}],[\Delta_{22}],[\Delta_{24}], [\Delta_{41}], [\Delta_{42}], [\Delta_{44}] 
\Big\rangle $}\\

\hline
${\mathfrak N}_{04}^{\alpha}$ $:$ \quad $e_1e_1=e_3$\quad  $e_1e_2=e_3$\quad $e_2e_2=\alpha e_3$ \\  

\multicolumn{8}{l}{
${\rm H}^2({\mathfrak N}_{04}^{\alpha\neq0})=
\Big\langle [\Delta_{12}],[\Delta_{14}],[\Delta_{21}],[\Delta_{22}],[\Delta_{24}], [\Delta_{41}], [\Delta_{42}], [\Delta_{44}]
 \Big\rangle = \Phi_\alpha $}\\

\multicolumn{8}{l}{
${\rm H}^2({\mathfrak N}_{04}^{0})=
\Phi_0  \oplus \Big\langle  [\Delta_{13}], 
[\Delta_{31}+\Delta_{32}-\Delta_{23}]
 \Big\rangle $}\\

\hline
${\mathfrak N}_{05}$ $:$ \quad $e_1e_1=  e_3$ \quad $e_1e_2=e_3$ \quad  $e_2e_1=e_3$ \\ 

\multicolumn{8}{l}{
${\rm H}^2({\mathfrak N}_{05})=
\Big\langle  [\Delta_{12}],[\Delta_{14}],[\Delta_{21}],[\Delta_{22}],[\Delta_{24}], [\Delta_{41}], [\Delta_{42}], [\Delta_{44}]
\Big\rangle $}\\

\hline
${\mathfrak N}_{06}$ $:$ \quad $e_1e_2 = e_4$ \quad $e_3e_1 = e_4$   \\ 

\multicolumn{8}{l}{
${\rm H}^2({\mathfrak N}_{06})=
\Big\langle   [\Delta_{11}],[\Delta_{13}],[\Delta_{21}],[\Delta_{22}],[\Delta_{23}], [\Delta_{31}], [\Delta_{32}], [\Delta_{33}]
\Big\rangle $}\\
 
\hline
{${\mathfrak N}_{07}$} $:$ \quad $e_1e_2 = e_3$ \quad $e_2e_1 = e_4$   \quad $e_2e_2 = -e_3$ \\ 

 \multicolumn{8}{l}{
${\rm H}^2({\mathfrak N}_{07})=
\Big\langle   [\Delta_{11}],[\Delta_{22}],[\Delta_{23}-\Delta_{13}],[\Delta_{24}], [\Delta_{32}-\Delta_{13}], [\Delta_{41}-\Delta_{14}]
\Big\rangle $}\\

\hline
${\mathfrak N}_{08}^{\alpha}$ $:$ \quad $e_1e_1 = e_3$ \quad $e_1e_2 = e_4$ \quad $e_2e_1 = -\alpha e_3$ \quad $e_2e_2 = -e_4$ \\ 
 
\multicolumn{8}{l}{
${\rm H}^2({\mathfrak N}_{08}^{\alpha\neq1})=
\Big\langle
\relax 
\begin{array}{l}
[\Delta_{12}], [\Delta_{21}], [\Delta_{13}-\alpha\Delta_{23}], 
[\Delta_{14}-\Delta_{24}], \\ \relax [\Delta_{31}-\Delta_{13}],[\Delta_{42}-\Delta_{24}]
\end{array}
\Big\rangle = \Phi_{\alpha} $}\\

\multicolumn{8}{l}{
${\rm H}^2({\mathfrak N}_{08}^{1})=
\Phi_{1} \oplus\Big\langle 
 [\Delta_{32}+\Delta_{41}-\Delta_{23}-\Delta_{14}]
\Big\rangle $}\\

\hline
${\mathfrak N}_{09}^{\alpha}$ $:$ \quad $e_1e_1 = e_4$ \quad $e_1e_2 = \alpha e_4$ \quad  $e_2e_1 = -\alpha e_4$ \quad $e_2e_2 = e_4$ \quad  $e_3e_3 = e_4$\\

\multicolumn{8}{l}{
${\rm H}^2({\mathfrak N}_{09}^{\alpha})=
\Big\langle   [\Delta_{12}] [\Delta_{12}], [\Delta_{21}], [\Delta_{22}],[\Delta_{23}], [\Delta_{31}],[\Delta_{32}],[\Delta_{33}]
\Big\rangle $}\\

\hline
${\mathfrak N}_{10}$ $:$ \quad  $e_1e_2 = e_4$ \quad $e_1e_3 = e_4$ \quad $e_2e_1 = -e_4$ \quad $e_2e_2 = e_4$ \quad $e_3e_1 = e_4$\\ 

\multicolumn{8}{l}{
${\rm H}^2({\mathfrak N}_{10})=
\Big\langle   [\Delta_{12}], [\Delta_{12}], [\Delta_{21}], [\Delta_{22}],[\Delta_{23}], [\Delta_{31}],[\Delta_{32}],[\Delta_{33}]
\Big\rangle $}\\

\hline
${\mathfrak N}_{11}$ $:$ \quad  $e_1e_1 = e_4$ \quad $e_1e_2 = e_4$ \quad $e_2e_1 = -e_4$ \quad $e_3e_3 = e_4$ \quad  \\

\multicolumn{8}{l}{
${\rm H}^2({\mathfrak N}_{11})=
\Big\langle  [\Delta_{12}], [\Delta_{12}], [\Delta_{21}], [\Delta_{22}],[\Delta_{23}], [\Delta_{31}],[\Delta_{32}],[\Delta_{33}]
\Big\rangle $}\\

\hline
{${\mathfrak N}_{12}$} $:$ \quad  $e_1e_2 = e_3$ \quad $e_2e_1 = e_4$  \\ 

 \multicolumn{8}{l}{
${\rm H}^2({\mathfrak N}_{12})=
\Big\langle  [\Delta_{11}], [\Delta_{13}], [\Delta_{14}-\Delta_{41}], [\Delta_{22}],[\Delta_{23}-\Delta_{32}],[\Delta_{24}]
\Big\rangle $}\\

\hline
${\mathfrak N}_{13}$ $:$ \quad $e_1e_1 = e_4$ \quad $e_1e_2 = e_3$ \quad $e_2e_1 = -e_3$ \quad 
$e_2e_2=2e_3+e_4$\\
 
\multicolumn{8}{l}{
${\rm H}^2({\mathfrak N}_{13})=
\Big\langle
\relax \begin{array}{l}
[\Delta_{21}], [\Delta_{22}], [\Delta_{14}+\Delta_{23}], [-\Delta_{13}+2\Delta_{14}+\Delta_{24}], \\ \relax [\Delta_{31}-2\Delta_{13}-2\Delta_{32}+\Delta_{42}],
[\Delta_{41}-2\Delta_{14}-\Delta_{32}]
\end{array}
\Big\rangle $}\\

\hline
{${\mathfrak N}_{14}^{\alpha}$} $:$ \quad   $e_1e_2 = e_4$ \quad $e_2e_1 =\alpha e_4$ \quad $e_2e_2 = e_3$ \\

\multicolumn{8}{l}{
${\rm H}^2({\mathfrak N}_{14}^{\alpha\neq0,1})=
\Big\langle \relax
\begin{array}{l}
[\Delta_{11}], [\Delta_{21}], [\Delta_{23}], [\Delta_{13}+\Delta_{24}],\\ \relax [\Delta_{32}], [(\alpha-1)\Delta_{24}+\alpha\Delta_{31}+\Delta_{42}] 
\end{array}
\Big\rangle  = \Phi_{\alpha}$}\\ 

\multicolumn{8}{l}{
${\rm H}^2({\mathfrak N}_{14}^{0})= \Phi_0 \oplus
\Big\langle  [\Delta_{14}]
\Big\rangle $}\\

\multicolumn{8}{l}{
${\rm H}^2_{com}({\mathfrak N}_{14}^{1})=
\Big\langle  
[\Delta_{11}],\ [\Delta_{13}+\Delta_{31}+\Delta_{24}+\Delta_{42}],
[\Delta_{23}+\Delta_{32}] \Big\rangle $}\\

\multicolumn{8}{l}{
${\rm H}^2({\mathfrak N}_{14}^{1})={\rm H}^2_{com}({\mathcal N}_{14}^{1})\oplus
\Big\langle [\Delta_{21}], [\Delta_{32}], [\Delta_{31}+\Delta_{42}]
\Big\rangle $}\\

\hline

${\mathfrak N}_{15}$ $:$ \quad  $e_1e_2 = e_4$ \quad $e_2e_1 = -e_4$ \quad $e_3e_3 = e_4$  \\

\multicolumn{8}{l}{
${\rm H}^2({\mathfrak N}_{15})=
\Big\langle   [\Delta_{11}], [\Delta_{13}], [\Delta_{21}], [\Delta_{22}],[\Delta_{23}], [\Delta_{31}],[\Delta_{32}],[\Delta_{33}]
\Big\rangle $}\\
\hline

\end{longtable}

\subsubsection{Central extensions of ${\mathfrak N}_{01}$}
	Let us use the following notations:
	\begin{longtable}{lllllll} 
	$\nabla_1 = [\Delta_{12}+\Delta_{21}],$ & $\nabla_2 = [\Delta_{13}+\Delta_{31}],$ & $\nabla_3 = [\Delta_{14}+\Delta_{41}],$ & $\nabla_4 = [\Delta_{33}],$ & $\nabla_5 = [\Delta_{34}+\Delta_{43}],$ \\
$\nabla_6 = [\Delta_{44}],$ & $\nabla_7 = [\Delta_{21}],$ &$\nabla_8 = [\Delta_{31}],$ & $\nabla_9 = [\Delta_{41}],$ & $\nabla_{10}= [\Delta_{43}]$ 
	\end{longtable}	
	
Take $\theta=\sum\limits_{i=1}^{10}\alpha_i\nabla_i\in {\rm H^2}({\mathfrak N}_{01}).$
	The automorphism group of ${\mathfrak N}_{01}$ consists of invertible matrices of the form
	$$\phi=
	\begin{pmatrix}
	x &  0  & 0 & 0\\
	q &  x^2& r & u\\
	w &  0  & t & k\\
    z &  0  & y & l
	\end{pmatrix}.
	$$
	Since
	$$
	\phi^T\begin{pmatrix}
	0 & \alpha_1   & \alpha_2 & \alpha_3\\
	\alpha_1 +\alpha_7  & 0 & 0 & 0\\
	\alpha_2+\alpha_8 &  0  & \alpha_4 & \alpha_5\\
	\alpha_3+\alpha_9 &  0  & \alpha_5+\alpha_{10} & \alpha_6
	\end{pmatrix} \phi=	\begin{pmatrix}
	\alpha^* & \alpha_1^*   & \alpha_2^* & \alpha_3^*\\
	\alpha_1^* +\alpha_7^*  & 0 & 0 & 0\\
	\alpha_2^*+\alpha_8^* &  0  & \alpha_4^* & \alpha_5^*\\
	\alpha_3^*+\alpha_9^* &  0  & \alpha_5^*+\alpha_{10}^* & \alpha_6^*
	\end{pmatrix},
	$$
	 we have that the action of ${\rm Aut} ({\mathfrak N}_{01})$ on the subspace
$\langle \sum\limits_{i=1}^{10}\alpha_i\nabla_i  \rangle$
is given by
$\langle \sum\limits_{i=1}^{10}\alpha_i^{*}\nabla_i\rangle,$
where
\begin{longtable}{lcl}
$\alpha^*_1$&$=$&$x^3 \alpha _1,$ \\
$\alpha^*_2$&$=$&$r x \alpha _1+y \left(x \alpha _3+w \alpha _5+z \alpha _6\right)+t \left(x \alpha _2+w \alpha _4+z \left(\alpha _5+\alpha _{10}\right)\right),$ \\
$\alpha^*_3$&$=$&$u x \alpha _1+l \left(x \alpha _3+w \alpha _5+z \alpha _6\right)+k \left(x \alpha _2+w \alpha _4+z \left(\alpha _5+\alpha _{10}\right)\right),$ \\
$\alpha_4^*$&$=$&$t^2 \alpha _4+y \left(2 t \alpha _5+y \alpha _6+t \alpha _{10}\right),$\\
$\alpha_5^*$&$=$&$k t \alpha _4+(l t+k y) \alpha _5+y \left(l \alpha _6+k \alpha _{10}\right)$\\
$\alpha_6^*$&$=$&$k^2 \alpha _4+l \left(2 k \alpha _5+l \alpha _6+k \alpha _{10}\right),$\\
$\alpha_7^*$&$=$&$x^3 \alpha _7,$\\
$\alpha_8^*$&$=$&$r x \alpha _7+t x \alpha _8+x y \alpha _9+w y \alpha _{10}-t z \alpha _{10},$\\
$\alpha_9^*$&$=$&$u x \alpha _7+k x \alpha _8+l x \alpha _9+l w \alpha _{10}-k z \alpha _{10},$\\
$\alpha_{10}^*$&$=$&$(l t-k y) \alpha _{10}.$\\
\end{longtable}

 We are interested only in the cases with 
 \begin{center}
$(\alpha_1,\alpha_7)\neq (0,0),$ 
$(\alpha_2,\alpha_4,\alpha_5,\alpha_8, \alpha_{10}) \neq (0,0,0,0,0),$\\
$(\alpha_3,\alpha_5,\alpha_6,\alpha_9, \alpha_{10}) \neq (0,0,0,0,0),$
$(\alpha_7,\alpha_8,\alpha_9, \alpha_{10}) \neq (0,0,0,0).$ 
 \end{center} 

\begin{enumerate}
    \item $\alpha_1=0,\ \alpha_{7}\neq0,$ then choosing $r=-\frac{t x \alpha _8+x y \alpha _9+(w y-t z) \alpha _{10}}{x \alpha _7},$ $u=-\frac{k x \alpha _8+l x \alpha _9+(l w-k z) \alpha _{10}}{x \alpha _7},$ we have $\alpha_8^*=\alpha_9^*=0.$ 
  
  The family of orbits $\langle\alpha_4\nabla_4+\alpha_5\nabla_5+\alpha_6\nabla_6+\alpha_{10}\nabla_{10}\rangle$ gives us characterised structure of three dimensional ideal whose a one dimensional extension of two dimensional subalgebra with basis $\{e_3, e_4\}.$ 
Let us remember the classification of algebras of this type.

\begin{longtable}{ll llllllllllll}
${\mathcal N}^{3*}_{01}$ &:&  $e_1 e_1 = e_2$\\

${\mathcal N}^{3*}_{02}$ &:&  $e_1 e_1 = e_3$ &  $e_2 e_2=e_3$ \\

${\mathcal N}^{3*}_{03}$ &:&   $e_1 e_2=e_3$ & $e_2 e_1=-e_3$   \\

${\mathcal N}^{3*}_{04}(\lambda)$ &:&
$e_1 e_1 = \lambda e_3$  & $e_2 e_1=e_3$  & $e_2 e_2=e_3$   \\

\end{longtable}

Using the classification of three dimensional nilpotent algebras, we may consider following cases. 
   \begin{enumerate}
	\item $\alpha_4=\alpha_5=\alpha_6=\alpha_{10}=0,$ i.e., three dimensional ideal is abelian. Then 
	we may suppose $\alpha_2 \neq 0$ and choosing $y=0,$ $l=\alpha_2,$ $k =-\alpha_3,$ we obtain that  $\alpha_3^*=0,$ which implies $(\alpha_3^*,\alpha_5^*,\alpha_6^*, \alpha_9^*, \alpha_{10}^*) = (0,0,0,0,0).$ Thus, in this case we do not have new algebras. 
	
	\item $\alpha_4=1,$ $\alpha_5=\alpha_6=\alpha_{10}=0,$ i.e., three dimensional ideal is isomorphic to ${\mathcal N}_{01}^{3*}$. Then $\alpha_3 \neq 0$ and choosing $x=1,$ $t=1,$ $k=0,$ $y=0,$ $w = -\alpha_2,$ $l = \frac{\alpha_7}{\alpha_3}$ and  $t=\sqrt{\alpha_7},$ we have the representative 
	$\langle \nabla_3+ \nabla_4+\nabla_7\rangle.$

\item $\alpha_4=\alpha_6=1,$ $\alpha_5=\alpha_{10}=0,$ i.e., three dimensional ideal is isomorphic to ${\mathcal N}_{02}^{3*}$. Then choosing 
$x=\frac{1}{\sqrt[3]{\alpha_7}},$ $k=y=0,$ $l=t=1,$ $w = -\frac{\alpha_2}{\sqrt[3]{\alpha_7}},$ $z = -\frac{\alpha_3}{\sqrt[3]{\alpha_7}},$ 
we have the representative 
	$\langle \nabla_4+ \nabla_6+\nabla_7\rangle.$
	
\item $\alpha_4= \alpha_6=0,$ $\alpha_5=1, \alpha_{10}=-2, $ i.e., three dimensional ideal is isomorphic to ${\mathcal N}_{03}^{3*}$.  Then choosing 
$x=\frac{1}{\sqrt[3]{\alpha_7}},$ $k=y=0,$ $l=t=1,$ $w = -\frac{\alpha_3}{\sqrt[3]{\alpha_7}},$ $z = \frac{\alpha_2}{\sqrt[3]{\alpha_7}},$ 
we have the representative 
	$\langle \nabla_5+ \nabla_7-2\nabla_{10}\rangle.$
	
	\item $\alpha_4=\lambda, \alpha_5=0, \alpha_6=1, \alpha_{10}=1,$ i.e., three dimensional ideal is isomorphic to ${\mathcal N}_{04}^{3*}(\lambda)$. 
	\begin{enumerate}

\item If  $\lambda \neq 0,$ then choosing $x=\frac{1}{\sqrt[3]{\alpha_7}},$ $k=0,$ $y=0,$ 
$l=t=1,$
$z = \frac{\alpha_3}{\sqrt[3]{\alpha_7}},$
 and 
$w = \frac{\alpha_2-\alpha_3}{\lambda \sqrt[3]{\alpha_7}},$ we have the family of representatives 
	$\langle \lambda \nabla_4+ \nabla_6+\nabla_7+\nabla_{10}\rangle_{\lambda\neq 0}.$

	\item If $\lambda = 0$  
and  $\alpha_2=\alpha_3,$ then choosing  
$x=\frac{1}{\sqrt[3]{\alpha_7}},$ $k=0,$ $y=0,$ 
$l=t=1$ and
$z = \frac{\alpha_3}{\sqrt[3]{\alpha_7}},$
   we have the representative 
	$\langle \nabla_6+\nabla_7+\nabla_{10}\rangle.$

\item If $\lambda = 0$  
and  $\alpha_2\neq\alpha_3,$ then choosing  
$x=\frac{(\alpha_2-\alpha_3)^2}{\alpha_7},$ $k=0,$ $y=0,$ 
$l=t=\frac{(\alpha_2-\alpha_3)^3}{\alpha_7}$ and
$z = -\frac{\alpha_3(\alpha_2-\alpha_3)^2}{\alpha_7},$
   we have the representative 
	$\langle \nabla_2+ \nabla_6+\nabla_7+\nabla_{10}\rangle.$ 
	
 	\end{enumerate}
	
 \end{enumerate}
 
  \item $\alpha_1\neq 0,$ then choosing 
 \begin{longtable}{lcl}
  $r$&$=$&$ - \frac{t x \alpha _2+x y \alpha _3+t w \alpha _4+w y \alpha _5+t z \alpha _5+y z \alpha _6+t z \alpha _{10}}{x \alpha _1},$\\ 
  $u$&$ =$&$- \frac{k x \alpha _2+l x \alpha _3+k w \alpha _4+l w \alpha _5+k z \alpha _5+l z \alpha _6+k z \alpha _{10}}{x \alpha _1},$ 
\end{longtable}  
  we have $\alpha_2^*=\alpha_3^*=0.$ 
   
   \begin{enumerate}
	\item $\alpha_4=\alpha_5=\alpha_6=\alpha_{10}=0,$ i.e., three dimensional ideal is abelian. Then 
	we may suppose $\alpha_8 \neq 0$ and choosing $y=0,$ $l=\alpha_8,$ $k =-\alpha_9,$ we obtain that  $\alpha_9^*=0,$ which implies $(\alpha_3^*,\alpha_5^*,\alpha_6^*, \alpha_9^*, \alpha_{10}^*) = (0,0,0,0,0).$ Thus, in this case we do not have new algebras.

	\item $\alpha_4=1,$ $\alpha_5=\alpha_6=\alpha_{10}=0,$ i.e., three dimensional ideal is isomorphic to ${\mathcal N}_{01}^{3*}$. 
	Then  $\alpha_9\neq 0,$ and choosing  
$x=1,$ $k=0,$ $t=\sqrt{\alpha_1},$ $y=-\frac{\sqrt{\alpha_1}\alpha_8}{\alpha_9},$ 
$l=\frac{\alpha_1}{\alpha_9}$ and $w=0,$
   we have the family of representatives 
	$\langle \nabla_1+ \nabla_4+\alpha \nabla_7+\nabla_{9}\rangle.$
	
\item $\alpha_4=\alpha_6=1,$ $\alpha_5=\alpha_{10}=0,$ i.e., three dimensional ideal is isomorphic to ${\mathcal N}_{02}^{3*}$. 
	\begin{enumerate}
	\item $\alpha_7=0,$ then  $\alpha _8^2+\alpha _9^2\neq 0,$ then choosing $x=\frac{\alpha _8^2+\alpha _9^2}{\alpha _1},$ $t=\frac{\alpha _8(\alpha _8^2+\alpha _9^2)}{\alpha _1},$ $y=\frac{\alpha _9(\alpha _8^2+\alpha _9^2)}{\alpha _1},$
	$l=\frac{\alpha _8(\alpha _8^2+\alpha _9^2)}{\alpha _1},$
	$k=-\frac{\alpha _9(\alpha _8^2+\alpha _9^2)}{\alpha _1},$ we have the representative 
	$\langle \nabla_1+ \nabla_4+\nabla_6+\nabla_{8}\rangle.$
	
	\item $\alpha_7=0,$ $\alpha _8^2+\alpha _9^2= 0,$ i.e., $\alpha_9 = \pm i \alpha _8 \neq 0,$ 
	then choosing $x=\sqrt{\alpha_8},$ 
	$t=\frac{\alpha_1} 2,$ $y=\pm \frac{\alpha_1} {2i},$
	$l=\pm ix\alpha_8,$ $k=x\alpha_8,$
	have the representative 
	$\langle \nabla_1+ \nabla_5+\nabla_8\rangle.$
	
	\item $\alpha_7\neq0,$ then choosing $x=1,$ $y=k=0,$
	$t=l=\sqrt{\alpha_7},$
	$z=\frac{\alpha _1 \alpha _9}{\alpha _7},$ $w=\frac{\alpha _1 \alpha _8}{\alpha _7}$
	we have the family of representatives 
	$\langle \alpha \nabla_1+ \nabla_4+\nabla_6+\nabla_{7}\rangle_{\alpha\neq0}.$
	\end{enumerate}
\item $\alpha_4= \alpha_6=0,$ $\alpha_5=1, \alpha_{10}=-2, $ i.e., three dimensional ideal is isomorphic to ${\mathcal N}_{03}^{3*}$.
	\begin{enumerate}
	\item $2 \alpha _1+\alpha _7\neq 0,$ then choosing $x=1,$ $y=k=0,$  $t=\sqrt{\alpha_1},$ $l=1,$ $z-\frac{ \alpha _1 \alpha _8}{2 \alpha _1+\alpha _7}$ and $w=\frac{ \alpha _1 \alpha _9}{2 \alpha _1+\alpha _7},$ we have the family of representatives 
	$\langle  \nabla_1+ \nabla_5+\alpha\nabla_{7}-2\nabla_{10}\rangle_{\alpha\neq-2}.$
	\item $2 \alpha _1+\alpha _7= 0,$ then in case of $(\alpha_8,\alpha_9) = (0,0),$ we have the representative 
	$\langle  \nabla_1+ \nabla_5-2\nabla_{7}-2\nabla_{10}\rangle$ and case of $(\alpha_8,\alpha_9) \neq (0,0),$ without loss of generality we may assume $\alpha_8 \neq 0$ and choosing $x=1,$ $y=0,$
	$l=\alpha_8,$ $k=-\alpha_9,$
	$t=\frac{\alpha_1}{\alpha_8},$  we have the representative 
	$\langle  \nabla_1+ \nabla_5-2\nabla_{7}+\nabla_{8}-2\nabla_{10}\rangle.$

\end{enumerate}
\item $\alpha_4=\lambda, \alpha_5=0, \alpha_6=1, \alpha_{10}=1,$ i.e., three dimensional ideal is isomorphic to ${\mathcal N}_{04}^{3*}(\lambda)$. 
	\begin{enumerate}

\item  $\alpha _1^2+\alpha _1 \alpha _7+\lambda \alpha _7^2 \neq 0,$ then choosing $x=1,$ $y=k=0$ and 
\begin{center}$t=l=\sqrt{\alpha_7},$
$z=\frac{ \alpha _1 \left(\alpha _1 \alpha _8+\alpha _4 \alpha _7 \alpha _9\right)}{\alpha _1^2+\alpha _1 \alpha _7+\alpha _4 \alpha _7^2},$ 
$w=\frac{\alpha _1 \left(\alpha _7 \left(\alpha _8-\alpha _9\right)+\alpha _1 \alpha _9\right)}{\alpha _1^2+\alpha _1 \alpha _7+\alpha _4 \alpha _7^2},$ 
\end{center} we have the representative
	$\langle \alpha \nabla_1+ \lambda \nabla_4+\nabla_{6}+\nabla_{7}+\nabla_{10}\rangle.$

\item  $\alpha _1^2+\alpha _1 \alpha _7+\lambda \alpha _7^2 = 0,$ then choosing $y=k=0,$ $w=\frac{z \alpha _7}{\alpha _1}-x \alpha _9,$ we have 
$\alpha_9^*=0,$ $\alpha_8^*=\frac{tx}{\alpha_1}(\alpha_1\alpha_8 - \lambda \alpha_7 \alpha_9).$ Thus, in this case we have the representatives $\langle \frac{-1 \pm \sqrt{1-4\lambda}}{2} \nabla_1+ \lambda \nabla_4+\nabla_{6}+\nabla_{7}+\nabla_{10}\rangle$  and $\langle \frac{- 1 \pm \sqrt{1-4\lambda}}{2} \nabla_1+ \lambda \nabla_4+\nabla_{6}+\nabla_{7}+\nabla_{8}+\nabla_{10}\rangle$ depending on $\alpha_1\alpha_8 = \lambda \alpha_7 \alpha_9$ or not.

\end{enumerate}
\end{enumerate}

\end{enumerate}

Summarizing all cases, we have the following distinct orbits
\begin{center}
$\langle \nabla_3+ \nabla_4+\nabla_7\rangle,$  
$\langle \nabla_5+ \nabla_7-2\nabla_{10}\rangle,$  
$\langle \nabla_2+ \nabla_6+\nabla_7+\nabla_{10}\rangle$ 
$\langle \nabla_1+ \nabla_4+\alpha \nabla_7+\nabla_{9}\rangle,$ 
$\langle \nabla_1+ \nabla_4+\nabla_6+\nabla_{8}\rangle,$ 
$\langle \nabla_1+ \nabla_5+\nabla_8\rangle,$ 
	$\langle \alpha \nabla_1+ \nabla_4+\nabla_6+\nabla_{7}\rangle,$ 
$\langle  \nabla_1+\nabla_5+\alpha\nabla_{7}-2\nabla_{10}\rangle,$ \\
$\langle  \nabla_1+ \nabla_5-2\nabla_{7}+\nabla_{8}-2\nabla_{10}\rangle,$  
$\langle \alpha \nabla_1+ \lambda \nabla_4+\nabla_{6}+\nabla_{7}+\nabla_{10}\rangle,$ \\
$\langle \frac{-1 \pm \sqrt{1-4\lambda}}{2} \nabla_1+ \lambda\nabla_4+\nabla_{6}+\nabla_{7}+\nabla_{8}+\nabla_{10}\rangle,$
\end{center}
which gives the following new algebras (see section \ref{secteoA}):

\begin{center}

${\rm N}_{01},$ 
${\rm N}_{02},$  
${\rm N}_{03},$
${\rm N}_{04}^{\alpha},$
${\rm N}_{05},$
${\rm N}_{06},$ 
${\rm N}_{07}^{\alpha},$ 
${\rm N}_{08}^{\alpha},$  
${\rm N}_{09},$  
${\rm N}_{10}^{\alpha, \lambda},$  
${\rm N}_{11}^{\lambda},$ 
${\rm N}_{12}^{\lambda\neq \frac{1}{4}}.$ 

\end{center}

\subsubsection{Central extensions of ${\mathfrak N}_{02}$}
	Let us use the following notations:
	\begin{longtable}{lll} 
	$\nabla_1 = [\Delta_{12}+\Delta_{21}],$ & $\nabla_2 = [\Delta_{13}+\Delta_{31}],$ & $\nabla_3 = [\Delta_{24}+\Delta_{42}],$ \\ $\nabla_4 = [\Delta_{21}],$ & $\nabla_5 = [\Delta_{31}],$ & $\nabla_6 = [\Delta_{42}].$
	\end{longtable}	
	
Take $\theta=\sum\limits_{i=1}^{6}\alpha_i\nabla_i\in {\rm H^2}({\mathfrak N}_{02}).$
	The automorphism group of ${\mathfrak N}_{02}$ consists of invertible matrices of the form
		$$\phi_1=
	\begin{pmatrix}
	x &  0  & 0 & 0\\
	0 &  y  & 0 & 0\\
	z &  u  & x^2 & 0\\
    t &  v  & 0 & y^2
	\end{pmatrix}, \quad \phi_2=
	\begin{pmatrix}
	0 &  x  & 0 & 0\\
	y &  0  & 0 & 0\\
	z &  u  & 0 & x^2\\
    t &  v  & y^2 & 0
	\end{pmatrix}.
	$$
	Since
	$$
	\phi_1^T\begin{pmatrix}
	0 & \alpha_1   & \alpha_2 & 0\\
	\alpha_1 +\alpha_4  & 0 & 0 & \alpha_3\\
	\alpha_2+\alpha_5 &  0  & 0 & 0\\
	0 &  \alpha_3+\alpha_6  & 0 & 0
	\end{pmatrix} \phi_1=	\begin{pmatrix}
	\alpha^* & \alpha_1^*   & \alpha_2^* & 0\\
	\alpha_1^* +\alpha_4^*  & \alpha^{**} & 0 & \alpha_3^*\\
	\alpha_2^*+\alpha_5^* &  0  & 0 & 0\\
	0 &  \alpha_3^*+\alpha_6^*  & 0 & 0
	\end{pmatrix},
	$$
	 we have that the action of ${\rm Aut} ({\mathfrak N}_{02})$ on the subspace
$\langle \sum\limits_{i=1}^{10}\alpha_i\nabla_i  \rangle$
is given by
$\langle \sum\limits_{i=1}^{10}\alpha_i^{*}\nabla_i\rangle,$
where
\begin{longtable}{lcllcllcl}
$\alpha^*_1$&$=$&$x y \alpha _1+u x \alpha _2+t y \left(\alpha _3+\alpha _6\right),$ &
$\alpha^*_2$&$=$&$x^3 \alpha _2,$ &
$\alpha^*_3$&$=$&$y^3 \alpha _3,$ \\
$\alpha_4^*$&$=$&$x y \alpha _4+u x \alpha _5-t y \alpha _6,$&
$\alpha_5^*$&$=$&$x^3 \alpha _5,$&
$\alpha_6^*$&$=$&$y^3 \alpha _6.$\\
\end{longtable}

 We are interested only in the cases with 
 \begin{center}
$(\alpha_3,\alpha_6)\neq (0,0),$  $(\alpha_2,\alpha_5)\neq (0,0),$ $(\alpha_4,\alpha_5,\alpha_6) \neq (0,0,0).$ 
 \end{center} 

\begin{enumerate}
    \item $(\alpha_5,\alpha_6)=(0,0),$ then $\alpha_2\alpha_3\alpha_4\neq0 $ and by choosing 
    $ x=\frac{\alpha_4}{\sqrt[3]{\alpha_2^2\alpha_3}}, $ 
    $ y=\frac{\alpha_4}{\sqrt[3]{\alpha_2\alpha_3^2}}$ 
    and 
        $t=-\frac{x \left(y \alpha _1+u \alpha _2\right)}{y \alpha _3},$ 
we have the representative $ \left\langle \nabla_2+\nabla_3+\nabla_4\right\rangle;$

    \item $(\alpha_5,\alpha_6)\neq (0,0),$ then without loss of generality (maybe with an action of a suitable $\phi_2$), we can suppose $\alpha_5\neq0$ and choosing $u=\frac{t y \alpha _6-x y \alpha _4}{x \alpha _5},$ we have $\alpha_4^*=0$.
    \begin{enumerate}
        \item $\alpha_3\alpha_5+(\alpha_2+\alpha_5)\alpha_6=0,$ then $\alpha_6\neq0.$ 
        \begin{enumerate}
        
            \item if $\alpha_1\neq0,$ then choosing $x=\frac{\alpha_1}{\sqrt[3]{\alpha_5^2\alpha_6}},$ 
            $ y=\frac{\alpha_1}{\sqrt[3]{\alpha_5\alpha_6^2}},$ we have the family of representatives
            $\left\langle\nabla_1+\alpha\nabla_2-(1+ \alpha)\nabla_3+ \nabla_5+\nabla_6\right\rangle;$
        
            \item if $\alpha_1=0,$ then choosing $x=y\sqrt[3]{\frac{\alpha_6}{\alpha_5}},$ we have the family if representatives
            $\left\langle \alpha\nabla_2-(1+ \alpha)\nabla_3+ \nabla_5+\nabla_6\right\rangle.$
        \end{enumerate}
        
        \item $\alpha_3\alpha_5+(\alpha_2+\alpha_5)\alpha_6\neq0,$ then choosing $t=-\frac{x \alpha _1 \alpha _5}{\alpha _3 \alpha _5+\left(\alpha _2+\alpha _5\right) \alpha _6},$ we have $\alpha_1^*=0.$

        \begin{enumerate}
            \item if $\alpha_6=0,$ then choosing $x=y\sqrt[3]{\frac{\alpha_3}{\alpha_5}},$ we have the representative
            $\left\langle\alpha\nabla_2+\nabla_3+ \nabla_5\right\rangle;$
            \item if $\alpha_6\neq0,$ then choosing $x=y\sqrt[3]{\frac{\alpha_6}{\alpha_5}},$ we have the representative
            $\left\langle \alpha\nabla_2+\beta\nabla_3+ \nabla_5+\nabla_6\right\rangle_{\beta\neq-(1+\alpha)}.$
        \end{enumerate}
    \end{enumerate}
\end{enumerate}

Summarizing all cases, we have the following distinct orbits 

\begin{center} 
$\langle \nabla_2+\nabla_3+\nabla_4\rangle,$ 
$\langle\nabla_1+\alpha\nabla_2-(1+ \alpha)\nabla_3+ \nabla_5+\nabla_6\rangle,$

$\langle\alpha\nabla_2+\nabla_3+ \nabla_5\rangle,$ 
$\langle \alpha\nabla_2+\beta\nabla_3+ \nabla_5+\nabla_6\rangle^{O(\alpha, \beta) \simeq O(\beta,\alpha)},$
\end{center}
which gives the following new algebras (see section \ref{secteoA}):

\begin{center}

${\rm N}_{13},$ 
${\rm N}_{14}^{\alpha},$ 
${\rm N}_{15}^{\alpha},$ 
${\rm N}_{16}^{\alpha, \beta}.$
 
\end{center}

\subsubsection{Central extensions of ${\mathfrak N}_{04}^0$}
	Let us use the following notations:
	\begin{longtable}{lllllll} 
	$\nabla_1 = [\Delta_{12}],$ & $\nabla_2 = [\Delta_{13}],$ & $\nabla_3 = [\Delta_{14}],$ & $\nabla_4 = [\Delta_{21}],$ & $\nabla_5 = [\Delta_{22}],$\\
    $\nabla_6 = [\Delta_{24}],$	& $\nabla_7 = [\Delta_{41}],$ & $\nabla_8 = [\Delta_{42}],$ & $\nabla_9 = [\Delta_{44}],$ & $\nabla_{10} = [\Delta_{31}+\Delta_{32}-\Delta_{23}],$ 
	\end{longtable}	
	
Take $\theta=\sum\limits_{i=1}^{10}\alpha_i\nabla_i\in {\rm H^2}({\mathfrak N}_{04}^0).$
	The automorphism group of ${\mathfrak N}_{04}^0$ consists of invertible matrices of the form
		$$\phi=
	\begin{pmatrix}
	x &  0  & 0 & 0\\
	y &  x+y  & 0 & 0\\
	z &  t  & x(x+y) & w\\
    u &  v  & 0 & r
	\end{pmatrix}. 
	$$
	Since
	$$
	\phi^T\begin{pmatrix}
	0 & \alpha_1   & \alpha_2 & \alpha_3\\
	\alpha_4  & \alpha_5 & -\alpha_{10} & \alpha_6\\
	\alpha_{10} &  \alpha_{10}  & 0 & 0\\
	\alpha_7 &  \alpha_8  & 0 & \alpha_9
	\end{pmatrix} \phi=	\begin{pmatrix}
	\alpha^{*} & \alpha_1^{*}+\alpha^{*}   & \alpha_2^{*} & \alpha_3^{*}\\
	\alpha_4^{*}  & \alpha_5^{*} & -\alpha_{10}^{*} & \alpha_6^{*}\\
	\alpha_{10}^{*} &  \alpha_{10}^{*}  & 0 & 0\\
	\alpha_7^{*} &  \alpha_8^{*}  & 0 & \alpha_9^{*}
	\end{pmatrix},
	$$
	 we have that the action of ${\rm Aut} ({\mathfrak N}_{04}^0)$ on the subspace
$\langle \sum\limits_{i=1}^{10}\alpha_i\nabla_i  \rangle$
is given by
$\langle \sum\limits_{i=1}^{10}\alpha_i^{*}\nabla_i\rangle,$
where
\begin{longtable}{lcl}
$\alpha^*_1$&$=$&$x^2\alpha_1+x(t+y-z)\alpha_2-xy\alpha_4-ux(\alpha_7-\alpha _8)-$\\
&& \multicolumn{1}{r}{$(u-v)(x\alpha_3+y\alpha_6+u\alpha_9)-y(t-z)\alpha_{10},$} \\
$\alpha^*_2$&$=$&$x (x+y)(x \alpha _2-y \alpha _{10}),$ \\
$\alpha^*_3$&$=$&$w x \alpha _2+r x \alpha _3+r y \alpha _6+r u \alpha _9-w y \alpha _{10},$ \\
$\alpha_4^*$&$=$&$u((x+y) \alpha _6+v \alpha _9)-(x+y) z \alpha _{10} +x((x+y) \alpha _4+v \alpha _7+t \alpha _{10})+$\\
&&\multicolumn{1}{r}{$y((x+y) \alpha _2+v \alpha _8+t \alpha _{10}),$}\\
$\alpha_5^*$&$=$&$(x+y)^2\alpha_5+v((x+y)\alpha_6+(x+y)\alpha_8+v\alpha_9),$\\
$\alpha_6^*$&$=$&$r (x+y) \alpha _6+r v \alpha _9-w (x+y) \alpha _{10},$\\
$\alpha_7^*$&$=$&$rx\alpha_7+ry\alpha_8+ru\alpha _9+w(x+y)\alpha _{10},$\\
$\alpha_8^*$&$=$&$r v \alpha _9+(x+y)(r \alpha _8+w \alpha _{10}),$\\
$\alpha_9^*$&$=$&$r^2 \alpha _9,$\\
$\alpha_{10}^*$&$=$&$x (x+y)^2 \alpha _{10}.$\\
\end{longtable}

Since we are interested only in the cases with 
\begin{center}$(\alpha_2,\alpha_{10})\neq (0,0),\quad  (\alpha_3,\alpha_6,\alpha_7,\alpha_8,\alpha_9) \neq (0,0,0,0,0),$
\end{center}
consider following subcases:

\begin{enumerate}
    \item $\alpha_{10}=0,$ then $\alpha_2\neq0$ and
  choosing  
  $w=-\frac{r \left(x \alpha _3+y \alpha _6+u \alpha _9\right)}{x \alpha _2}$ and
  \begin{center}$t=\frac{x z \alpha _2-x^2 \alpha _1+u x \alpha _3-v x \alpha _3+x y \alpha _4-x y \alpha _5+u y \alpha _6-v y \alpha _6+u x \alpha _7-u x \alpha _8+u^2 \alpha _9-u v \alpha _9}{x \alpha _2},$ \end{center} we have $\alpha_1^{*}=\alpha_3^{*}=0.$  
    \begin{enumerate}
        \item $\alpha_9\neq0,$ then choosing $u=-\frac{x \alpha _7+y \alpha _8}{\alpha _9},\ v=-\frac{(x+y) \alpha _8}{\alpha _9},$ we have $\alpha_7^*=\alpha_8^*=0.$ 
        \begin{enumerate}
            \item $\alpha_5=\alpha_4=\alpha_6=0,$ then choosing $r=\sqrt{\frac{\alpha_2}{\alpha_9}},$ we have the representative $ \left\langle \nabla_2+ \nabla_9\right\rangle;$ 
            
            \item $\alpha_5=\alpha_4=0,\ \alpha_6\neq0,$ then choosing $x=1,\ y=\frac{\alpha _2 \alpha _9-\alpha _6^2}{\alpha _6^2}, $ $ r=\frac{\alpha _2}{\alpha _6},$ we have the representative $ \left\langle \nabla_2+\nabla_6+ \nabla_9 \right\rangle;$
            
            \item $\alpha_5=0,\ \alpha_4\neq0,\ \alpha_6=0$ then choosing $x=\frac{ \alpha _4}{\alpha _2},\ y=0,$ $ r=\frac{\sqrt{\alpha_4^{3}}}{\alpha _2 \sqrt{\alpha _9}},$ we have the representative $ \left\langle \nabla_2+\nabla_4+ \nabla_9 \right\rangle;$ 
            
            \item $\alpha_5=0,\ \alpha_4\neq0,\ \alpha_6\neq0,$ then choosing $x=\frac{\alpha _4}{\alpha _2},$ $ y=\frac{\alpha _4 \left(\alpha _4 \alpha _9-\alpha _6^2\right)}{\alpha _2 \alpha _6^2},$ $ r=\frac{\alpha _4^2}{\alpha _2 \alpha _6},$ we have the representative $ \left\langle \nabla_2+\nabla_4+\nabla_6+\nabla_9 \right\rangle;$ 
            
            \item $\alpha_5\neq0,\ \alpha_4=\alpha_5,$ then choosing $x=1,\ y= \frac{\alpha _2-\alpha _5}{\alpha _5},$ $ r=\frac{\alpha _2}{\sqrt{\alpha _5\alpha _9}},$ we have the family of  representatives $ \left\langle \nabla_2+\nabla_4+\nabla_5+\alpha\nabla_6+ \nabla_9 \right \rangle;$ 
            
            \item $\alpha_5\neq0,\ \alpha_4\neq\alpha_5,$ then choosing $x=\frac{\alpha _5-\alpha _4}{\alpha _2},$ $ y=\frac{\alpha _4 \left(\alpha _4-\alpha _5\right)}{\alpha _2 \alpha _5},$ $ r=\frac{(\alpha _4-\alpha _5)^2}{\alpha _2 \sqrt{\alpha_5\alpha _9}}$  we have the family of representatives 
            $ \left\langle \nabla_2+\nabla_5+\alpha\nabla_6+\nabla_9\right \rangle;$
        \end{enumerate}

\item $\alpha_9=0,\ \alpha_8\neq0,\ \alpha_7=\alpha_8$, then choosing $v=-\frac{x \alpha _4+y \alpha _5+u \alpha _6}{\alpha _8},$ we have $\alpha_4^*=0.$ 
    \begin{enumerate}
         \item $\alpha_6=-\alpha_8, \alpha_5=0,$ then choosing $x=1,\ y=0,\ r=\frac{\alpha_2}{\alpha_5},$ we have the representative $\left\langle \nabla_2-\nabla_6 +\nabla_7+\nabla_8\right\rangle;$
        \item $\alpha_6=-\alpha_8, \alpha_5\neq0,$ then choosing $x=1,\ y=\frac{\alpha _2-\alpha _5}{\alpha _5},\ r=\frac{\alpha_2}{\alpha_8},$ we have the representative $\left\langle \nabla_2+\nabla_5 -\nabla_6 +\nabla_7+\nabla_8\right\rangle;$
        \item $\alpha_6\neq-\alpha_8$, $\alpha_6=0, \ \alpha_5=0,$ then choosing $x=1,\ y=0,\ r=\frac{\alpha_2}{\alpha _8},$  we have the representative $ \left\langle \nabla_2+\nabla_7+ \nabla_8\right\rangle;$
        \item $\alpha_6\neq-\alpha_8$, $\alpha_6=0, \ \alpha_5\neq0,$ then choosing $x=\frac{\alpha _5}{\alpha _2},\ r=\frac{\alpha _5^2}{\alpha_2\alpha_8},$ we have the representative $\left\langle \nabla_2+ \nabla_5+\nabla_7 +\nabla_8 \right\rangle;$ 
        \item $\alpha_6\neq-\alpha_8$, $\alpha_6\neq0,$ then choosing $x=1,\ y=0,\ r=\frac{\alpha_2}{\alpha_8},$ we have the family of representatives $\left\langle \nabla_2+\alpha\nabla_6+\nabla_7+\nabla_8\right \rangle_{\alpha\neq0, -1}.$
          \end{enumerate}

    \item $\alpha_9=0,\ \alpha_8\neq0,\ \alpha_7\neq\alpha_8$, then choosing $y=-\frac{x \alpha _7}{\alpha _8},$ we have $\alpha_7^*=0.$ Hence,
    \begin{enumerate}
        \item $\alpha_6=-\alpha_8,\ \alpha_5=0,$ then choosing $x=1,\ u=\frac{\alpha _4}{\alpha _8},\ r=\frac{\alpha_2}{\alpha_8},$ we have the representative $\left\langle\nabla_2- \nabla_6+\nabla_8 \right\rangle;$
        \item $\alpha_6=-\alpha_8,\ \alpha_5\neq0,$ then choosing $x=\frac{\alpha_5}{\alpha_2},\ u=\frac{\alpha_4\alpha_5}{\alpha_2\alpha_8},\ r=\frac{\alpha_5^2}{\alpha_2\alpha_8},$ we have the representative $\left\langle\nabla_2+\nabla_5- \nabla_6+\nabla_8\right\rangle;$
        \item $\alpha_6\neq-\alpha_8,$ $\alpha_6=\alpha_4=0,$ then choosing $x=1,\ v=-\frac{\alpha_5}{\alpha_8},\ r=\frac{\alpha_2}{\alpha_8},$ we have the representative $\left\langle\nabla_2+\nabla_8 \right\rangle;$
        \item $\alpha_6\neq-\alpha_8,$ $\alpha_6=0,\ \alpha_4\neq0,$ then choosing $x=\frac{\alpha _4}{\alpha _2},\ v=-\frac{\alpha _4 \alpha _5}{\alpha _2 \alpha _8},\ r=\frac{\alpha _4^2}{\alpha _2 \alpha _8},$ we have the representative $\left\langle\nabla_2+\nabla_4 +\nabla_8 \right\rangle;$
        \item $\alpha_6\neq-\alpha_8,$ $\alpha_6\neq0,$ then choosing $x=1,\ u=-\frac{\alpha_4}{\alpha_6},\ v=-\frac{\alpha _5}{\alpha _6+\alpha _8},\ r=\frac{\alpha_2}{\alpha_8},$ we have the family of representatives $\left\langle\nabla_2+\alpha \nabla_6 +\nabla_8\right\rangle_{\alpha\neq 0, -1}.$
        \end{enumerate}
    \item $\alpha_9=\alpha_8=0,\ \alpha_7\neq0,$  then choosing $v=-\frac{(x+y) \left(x \alpha _4+y \alpha _5+u \alpha _6\right)}{x \alpha _7}$ we have $\alpha_4^*=0.$ Hance, 
    \begin{enumerate}
        \item $\alpha_6=\alpha_5=0,$ then choosing $x=1,\ y=0,\ r=\frac{\alpha_2}{\alpha_7},$ we have the representative $\left\langle\nabla_2+\nabla_7\right \rangle;$
        \item $\alpha_6=0,\ \alpha_5\neq0,$ then choosing $x=\frac{\alpha_5}{\alpha_2},\ y=0,\ r=\frac{\alpha_5^2}{\alpha_2\alpha_7},$ we have the representative $\left\langle\nabla_2+\nabla_5 +\nabla_7\right\rangle;$
        \item $\alpha_6\neq0,$ then choosing $x=1,\ y=\frac{\alpha_7-\alpha_6}{\alpha_6},\ u=\frac{\alpha_5}{\alpha_6},\ r=\frac{\alpha_2}{\alpha_6},$ we have the representative $\left\langle\nabla_2+\nabla_6 +\nabla_7\right\rangle.$
    \end{enumerate}
    \item $\alpha_9=\alpha_8=\alpha_7=0,\ \alpha_6\neq0,$ then choosing $x=1,$ 
    $ y=0,$ 
    $ u=-\frac{\alpha _4}{\alpha _6},$ 
    $ v=-\frac{\alpha_5}{\alpha _6},\ r=\frac{\alpha_2}{\alpha_6},$ we have the representative $\left\langle\nabla_2+\nabla_6\right\rangle.$
    \end{enumerate}
    
\item $\alpha_{10}\neq0,$ then choosing $t=-\frac{x (x+y) \alpha _4+y (x+y) \alpha _5+u x \alpha _6+u y \alpha _6+v x \alpha _7+v y \alpha _8+u v \alpha _9-x z \alpha _{10}-y z \alpha _{10}}{(x+y) \alpha _{10}}$ and  $ w=-\frac{r \left((x+y) \alpha _8+v \alpha _9\right)}{(x+y) \alpha _{10}},$ we have  $\alpha_{4}^*=\alpha_{8}^*=0$. Now we consider following subcases:
    \begin{enumerate}
    \item $\alpha_9\neq0,$ then  choosing $u=-\frac{(x+y) \alpha _6+2 x \alpha _7}{2 \alpha _9},\ v=-\frac{(x+y) \alpha _6}{2 \alpha _9},$ we have  $\alpha_{6}^*=\alpha_{7}^*=0$. Hence,
        \begin{enumerate}
    
        \item $\alpha_2=-\alpha_{10},\ \alpha_5=\alpha_1=0,\ \alpha_3=0,$ then choosing $ x=1,$ $y=0, $ $ r=\sqrt{\frac{\alpha_{10}}{\alpha_9}},$ we have the representative $\left\langle-\nabla_2+\nabla_9+\nabla_{10}\right \rangle;$

        \item $\alpha_2=-\alpha_{10},\ \alpha_5=\alpha_1=0,\ \alpha_3\neq0,$ then choosing $x=1,$ $ y=-1+\frac{\alpha_3}{\sqrt{\alpha_9\alpha_{10}}}, $ $  r=\frac{\alpha_3}{\alpha _9},$ we have the representative $\left\langle-\nabla_2+\nabla_3+\nabla_9+\nabla_{10}\right \rangle;$
        \item $\alpha_2=-\alpha_{10},\ \alpha_5=0,\ \alpha_1\neq0,$ then choosing $x=1,\ y=-1+\sqrt{\frac{\alpha _1}{\alpha _{10}}},\ r=\sqrt{\frac{\alpha _1}{\alpha _{9}}},$ we have the family of representatives $\left\langle\nabla_1-\nabla_2 +\alpha\nabla_3+\nabla_9+\nabla_{10}\right \rangle,$ 
 where $O(\alpha)\simeq O(-\alpha);$ 
        \item $\alpha_2=-\alpha_{10},\ \alpha_5\neq0,$ $ \alpha_1=\alpha_5,$ $ \alpha_3=0,$ then choosing $x=\frac{\alpha _5}{\alpha _{10}},\ y=0,\  r=\frac{\alpha_5 \sqrt{\alpha_5}}{\alpha_{10}\sqrt{\alpha _9}},$ we have the representative $\left\langle\nabla_1-\nabla_2+ \nabla_5+\nabla_9+\nabla_{10}\right \rangle;$
        \item $\alpha_2=-\alpha_{10},\ \alpha_5\neq0,$ $ \alpha_1=\alpha_5,$ $ \alpha_3\neq0,$ then choosing 
        $x=\frac{\alpha _5}{\alpha _{10}},$  $y=\frac{\alpha_3 \sqrt{\alpha_5}-\alpha_5 \sqrt{\alpha_9}} {\alpha_{10}\sqrt{\alpha _9}},$ $  r=\frac{\alpha _3 \alpha _5}{\alpha _9 \alpha _{10}},$ we have the representative $\left\langle\nabla_1-\nabla_2+\nabla_3+ \nabla_5+ \nabla_9 +\nabla_{10}\right\rangle;$

        \item $\alpha_2=-\alpha_{10},\ \alpha_5\neq0,\ \alpha_1\neq\alpha_5,$ then choosing $x=\frac{\alpha _5}{\alpha _{10}},$ $ y=\frac{-\alpha _5+\sqrt{\alpha_5 (-\alpha _1+\alpha_5)}}{\alpha_{10}},$ 
        $  r=\frac{\alpha_5 \sqrt{\alpha_5-\alpha_1}}{\alpha_{10}\sqrt{\alpha_9}},$ we have the 
 family of representatives $\left\langle-\nabla_2+\alpha \nabla_3+\nabla_5+ \nabla_9 +\nabla_{10}\right\rangle,$ where $O(\alpha)\simeq O(-\alpha);$ 
        \item $\alpha_2\neq-\alpha_{10},\ \alpha_5=\alpha_3=\alpha_1=0,$ then choosing $x=1,\ y=\frac{\alpha_2}{\alpha _{10}},\  r=\frac{\alpha _2+\alpha _{10}}{\sqrt{\alpha_9\alpha _{10}}},$ we have the representative $\left\langle\nabla_9+\nabla_{10}\right \rangle;$ 

        \item $\alpha_2\neq-\alpha_{10},\ \alpha_5=\alpha_3=0,\ \alpha_1\neq0,$ then choosing $x=\frac{\alpha _1 \alpha _{10}}{\left(\alpha _2+\alpha _{10}\right)^2},$  $y=\frac{\alpha_1\alpha_2}{(\alpha _2+\alpha _{10})^2},$ $ r=\frac{\alpha _1\alpha_{10}\sqrt{\alpha_1}}{\sqrt{\alpha _9} (\alpha _2+\alpha _{10})^2},$ we have the representative $\left\langle\nabla_1+\nabla_9+ \nabla_{10} \right\rangle;$ 
        \item $\alpha_2\neq-\alpha_{10},\ \alpha_5=0,\ \alpha_3\neq0,$ then choosing $x=\frac{\alpha _3^2 \alpha _{10}}{\alpha_9(\alpha _2+\alpha _{10})^2},$ $ y=\frac{\alpha_2 \alpha_3^2}{\alpha_9(\alpha_2+\alpha _{10})^2},$ $  r=\frac{\alpha _3^3 \alpha _{10}}{\alpha_9^2 (\alpha _2+\alpha _{10})^2},$ we have the family of representatives $\left\langle\alpha\nabla_1+\nabla_3+\nabla_9+ \nabla_{10} \right\rangle;$
        \item $\alpha_2\neq-\alpha_{10},\ \alpha_5\neq0,$ then choosing $x=\frac{\alpha _5}{\alpha _{10}},$ 
        $y=\frac{\alpha_2\alpha_5}{\alpha _{10}^2},$ 
        $r=\frac{\alpha _5\sqrt{\alpha_5}(\alpha _2+\alpha _{10})}{\sqrt{\alpha_9}\alpha_{10}^2},$ we have the 
 family of representatives $\left\langle\alpha\nabla_1+\beta\nabla_3+\nabla_5+\nabla_9+ \nabla_{10} \right\rangle,$ where $O(\alpha,\beta)\simeq O(\alpha,-\beta).$
        \end{enumerate}
    \item $\alpha_9=0,\ \alpha_6\neq0,$ then  choosing $v=-\frac{(x+y) \alpha _5}{\alpha _6},$ we have  $\alpha_{5}^*=0$. Hence,

    \begin{enumerate}
        \item $\alpha_2=-\alpha_{10},\ \alpha_3=\alpha_{6},\ \alpha_7=0,\ \alpha_1=0,$ then choosing $x=1,\  y=0,$ $r=\frac{\alpha_{10}}{\alpha_{6}},$ we have the representative $\left\langle-\nabla_2+\nabla_3+\nabla_6 +\nabla_{10}\right\rangle;$
        \item $\alpha_2=-\alpha_{10},\ \alpha_3=\alpha_{6},\ \alpha_7=0,\ \alpha_1\neq0,$ then choosing $x=1,$ $  y=-1+\sqrt{\frac{\alpha _1}{\alpha _{10}}},$ $ r=\frac{\sqrt{\alpha_1\alpha_{10}}}{\alpha _6},$ we have the representative $\left\langle\nabla_1-\nabla_2+\nabla_3 +\nabla_6 +\nabla_{10}\right\rangle;$

        \item $\alpha_2=-\alpha_{10},\ \alpha_3=\alpha_{6},\ \alpha_7\neq0,$ then choosing $x=1,\  y=\frac{\alpha_1 -\alpha_6}{\alpha_6},$ 
        $u=\frac{\alpha_1}{\alpha_7},$ 
        $r=\frac{\alpha _7 \alpha _{10}}{\alpha _6^2},$ we have the representative $\left\langle-\nabla_2+\nabla_3 +\nabla_6 +\nabla_7+\nabla_{10}\right\rangle;$
        \item $\alpha_2=-\alpha_{10},\ \alpha_3\neq\alpha_{6},\ \alpha _3+\alpha _7=\alpha _6,\ \alpha_1=0,$ then choosing $x=1,$ 
        $  y=-\frac{\alpha_3}{\alpha_6},$ 
        $ r=-\frac{\left(\alpha _3-\alpha _6\right) \alpha _{10}}{\alpha _6^2},$ we have the representative $\left\langle-\nabla_2+\nabla_6 +\nabla_7+\nabla_{10}\right\rangle;$

        \item $\alpha_2=-\alpha_{10},\ \alpha_3\neq\alpha_{6},\ \alpha _3+\alpha _7=\alpha _6,\ \alpha_1\neq0,$ then choosing $x=\frac{\alpha _1 \alpha _6^2}{\left(\alpha _3-\alpha _6\right){}^2 \alpha _{10}},$ $  y=-\frac{\alpha _1 \alpha_3\alpha_6}{\left(\alpha _3-\alpha _6\right){}^2 \alpha _{10}},$ $r=\frac{\alpha _1^2 \alpha _6^2}{(\alpha_6-\alpha_3)^3 \alpha _{10}},$ we have the representative  $\left\langle\nabla_1-\nabla_2+\nabla_6 +\nabla_7+\nabla_{10}\right\rangle;$
    
        \item $\alpha_2=-\alpha_{10},$ $ \alpha_3\neq\alpha_{6},$ $ \alpha _3+\alpha _7\neq\alpha _6,$ then choosing $x=1,$ 
        $  y=-\frac{\alpha_3}{\alpha _6},$ 
        $ u=\frac{\alpha _1}{\alpha _3-\alpha _6+\alpha _7},$ 
        $ r=-\frac{\left(\alpha _3-\alpha _6\right) \alpha _{10}}{\alpha _6^2},$ we have the family of representatives  $\left\langle-\nabla_2+\nabla_6 +\alpha\nabla_7 +\nabla_{10}\right\rangle_{\alpha\neq1};$
        \item $\alpha_2\neq-\alpha_{10},$ then choosing $y=\frac{x\alpha_2}{\alpha_{10}},$ we have $\alpha_2^*=0.$
        \begin{enumerate}
            \item $\alpha_3=- \alpha_7,\ \alpha_1=0,$ then choosing $x=1,$ $ r=\frac{\alpha_{10}}{\alpha _6},$ we have the family of representatives  $\left\langle\alpha\nabla_3+ \nabla_6-\alpha\nabla_7+\nabla_{10}\right\rangle;$

            \item $\alpha_3=-\alpha_{7},\  \alpha_1\neq0,$ then choosing $x=\frac{\alpha _1}{\alpha _{10}}, $ 
            $ r=\frac{\alpha _1^2}{\alpha _6 \alpha _{10}},$ we have the family of  representatives  $\left\langle\nabla_1+\alpha \nabla_3+\nabla_6-\alpha\nabla_7+\nabla_{10}\right\rangle
            ;$
            
            \item $\alpha_3\neq-\alpha_{7},$ then choosing $x=1,$ $ u=\frac{x \alpha _1}{\alpha _3+\alpha _7},$ $ r=\frac{\alpha_{10}}{\alpha _6},$ we have the family of representatives  $\left\langle\alpha\nabla_3+ \nabla_6+ \beta\nabla_7+\nabla_{10}\right\rangle_{\alpha+\beta\neq0}.$
        \end{enumerate}
    \end{enumerate}
    \item $\alpha_9=\alpha_6=0,\ \alpha_7\neq0.$

    \begin{enumerate}
        \item $\alpha_2=-\alpha_{10},\ \alpha_3=-\alpha_7,\ \alpha_5=\alpha_1=0,$ then choosing $x=1,\ y=0, \ r=\frac{\alpha_{10}}{\alpha_7},$ we have the representative $\left\langle-\nabla_2-\nabla_3+\nabla_7+\nabla_{10}\right\rangle;$
     
        \item $\alpha_2=-\alpha_{10},\ \alpha_3=-\alpha_{7},\ \alpha_5=0,\ \alpha_1\neq0,$ then choosing $x=1,$ 
        $ y=\sqrt{\frac{\alpha_{1}}{\alpha_{10}}}-1,$ 
        $  r=\frac{\alpha _1}{\alpha _7},$ we have the representative $\left\langle \nabla_1-\nabla_2-\nabla_3+\nabla_7+\nabla_{10}\right\rangle;$

        \item $\alpha_2=-\alpha_{10},\ \alpha_3=-\alpha_{7},\ \alpha_5\neq0,\ \alpha_1-\alpha_5=0,$ then choosing $x=\frac{\alpha _5}{\alpha _{10}},$ 
        $y=0,$ 
        $r=\frac{\alpha _5^2}{\alpha _7 \alpha _{10}},$ we have the representative $\left\langle \nabla_1-\nabla_2-\nabla_3+\nabla_5+\nabla_7+ \nabla_{10}\right\rangle;$

        \item $\alpha_2=-\alpha_{10},\ \alpha_3=-\alpha_{7},\ \alpha_5\neq0,\ \alpha_1\neq\alpha_5,$ then by choosing $x=\frac{\alpha _5}{\alpha _{10}},$ 
        $y=\frac{ \sqrt{\alpha _5(\alpha _5-\alpha _1)}-\alpha_5}{\alpha _{10}},$ 
        $ r=\frac{\alpha_5(\alpha _5-\alpha _1)}{\alpha _7 \alpha _{10}},$ we have the representative $\left\langle -\nabla_2 -\nabla_3+\nabla_5+\nabla_7+ \nabla_{10}\right\rangle;$
        
        \item $\alpha_2=-\alpha_{10},\ \alpha_3\neq-\alpha_{7},\ \alpha_5=0,$ then choosing $x=1,$ $ y=0,$ 
        $ v=\frac{u(\alpha_3+\alpha_7)-\alpha _1}{\alpha_3+\alpha_7},$ $  r=\frac{\alpha_{10}}{\alpha_7},$ we have the family of representatives $\left\langle -\nabla_2 +\alpha\nabla_3+\nabla_7+ \nabla_{10}\right\rangle_{\alpha\neq -1};$
        
        \item $\alpha_2=-\alpha_{10},\ \alpha_3\neq-\alpha_{7},\ \alpha_5\neq0,$ then choosing $x=\frac{\alpha _5}{\alpha _{10}},$ $y=0, $ $ u=0,$ $  v=\frac{-\alpha _1\alpha_5} {(\alpha_3+\alpha_7)\alpha_{10}},$ $   r=\frac{\alpha _5^2}{\alpha_7\alpha_{10}},$ we have the family of representatives $\left\langle -\nabla_2 +\alpha\nabla_3+\nabla_5+\nabla_7+ \nabla_{10}\right\rangle_{\alpha\neq -1};$
      
        \item $\alpha_2\neq-\alpha_{10},$ then choosing $y=\frac{x \alpha _2}{\alpha _{10}},$ we have $\alpha_2^*=0.$ If $\alpha_3=-\alpha_7,$ then choosing $v=\frac{\alpha_1}{\alpha_7},$  if $\alpha_3\neq-\alpha_7,$ then choosing $u=\frac{x \alpha _1+v \alpha _3}{\alpha _3+\alpha _7},$ we have $\alpha_1^*=0.$ Thus, we always can assume $\alpha_1^*=0,\ u=v=0.$  
        \begin{enumerate}
            \item $\alpha_5=0,$ then choosing $x=1,\   r=\frac{\alpha_{10}}{\alpha_7},$ we have the representative $\left\langle\alpha\nabla_3+\nabla_7+ \nabla_{10}\right\rangle;$
            \item $\alpha_5\neq0,$ then choosing $x=\frac{\alpha _5}{\alpha _{10}},\ r=\frac{\alpha _5^2}{\alpha _7 \alpha _{10}},$ we have the representative \\ $\left\langle\alpha\nabla_3+\nabla_5+\nabla_7+ \nabla_{10}\right\rangle.$
        \end{enumerate}
    \end{enumerate}
    \item $\alpha_9=\alpha_7=\alpha_6=0,$ then $\alpha_3\neq0,$ and choosing $u=\frac{\left(x^2 \alpha _1+v x \alpha _3+y (x+y) \alpha _5\right) \alpha _{10}-x y \alpha _2 \alpha _5}{x \alpha _3 \alpha _{10}}$, we have $\alpha_1^{*}=0$. 
    \begin{enumerate}
        \item $\alpha_2=-\alpha_{10},\ \alpha_5=0,$ then choosing $x=1,\ y=0,\ r=\frac{\alpha_{10}}{\alpha_3},$ we have the representative $\left\langle-\nabla_2+\nabla_3+\nabla_{10} \right\rangle;$
        \item $\alpha_2=-\alpha_{10},\ \alpha_5\neq0,$ then choosing $x=\frac{\alpha_{10}}{\alpha_3},\ y=0,\ r=\frac{\alpha _5^2}{\alpha _3 \alpha _{10}},$ we have the representative $\left\langle-\nabla_2+\nabla_3+\nabla_5+\nabla_{10} \right\rangle;$
        \item $\alpha_2\neq-\alpha_{10},\ \alpha_5=0,$ then choosing $x=1,\ y=\frac{ \alpha _2}{\alpha _{10}},\ r=\frac{\left(\alpha _2+\alpha _{10}\right){}^2}{\alpha _3 \alpha _{10}},$ we have the representative $\left\langle\nabla_3+\nabla_{10} \right\rangle;$
        \item $\alpha_2\neq-\alpha_{10},\ \alpha_5\neq0,$ then choosing $x=\frac{\alpha _5}{\alpha _{10}},\ y=\frac{ \alpha_2\alpha_5}{\alpha_{10}^2},\ r=\frac{\alpha _5^2(\alpha _2+\alpha _{10})^2}{\alpha _3 \alpha _{10}^3},$ we have the representative $\left\langle\nabla_3+\nabla_5+\nabla_{10} \right\rangle.$
    \end{enumerate}
  \end{enumerate}
\end{enumerate}

Summarizing all cases, we have the following distinct orbits 

\begin{center} 
$\langle \nabla_2+\nabla_9\rangle,$ 
$\langle\nabla_2+ \nabla_6+\nabla_9\rangle,$ 
$\langle\nabla_2+ \nabla_4+\nabla_9\rangle,$
$\langle \nabla_2+\nabla_4+\nabla_6+\nabla_9\rangle,$ 
$\langle \nabla_2+\nabla_4+\nabla_5+\alpha\nabla_6+\nabla_9\rangle,$  $\langle\nabla_2+\nabla_5+\alpha\nabla_6+\nabla_9\rangle,$
$\langle \nabla_2+\nabla_5+\nabla_7+\nabla_8\rangle,$ 
$\langle \nabla_2+\nabla_5-\nabla_6+\nabla_7+\nabla_8\rangle,$  $\langle\nabla_2+\alpha\nabla_6+\nabla_7+\nabla_8\rangle,$
$\langle \nabla_2+\nabla_4+\nabla_8\rangle,$ 
$\langle \nabla_2+\nabla_5-\nabla_6+\nabla_8\rangle,$  $\langle\nabla_2+\alpha\nabla_6+\nabla_8\rangle,$
$\langle \nabla_2+\nabla_7\rangle,$ 
$\langle\nabla_2+\nabla_5+\nabla_7\rangle,$ 
$\langle\nabla_2+\nabla_6+\nabla_7\rangle,$
$\langle \nabla_2+\nabla_6\rangle,$ 
$\langle -\nabla_2+\nabla_9+\nabla_{10}\rangle,$ 
$\langle -\nabla_2+\nabla_3+\nabla_9+\nabla_{10}\rangle,$ 
$\langle\nabla_1-\nabla_2+\alpha\nabla_3+\nabla_9+\nabla_{10}\rangle^{O(\alpha)\simeq O(-\alpha)},$  
$\langle\nabla_1-\nabla_2+\nabla_5+\nabla_9+\nabla_{10}\rangle,$ 
$\langle-\nabla_2+\alpha\nabla_3+\nabla_5+\nabla_9+\nabla_{10}\rangle^{O(\alpha)\simeq O(-\alpha)},$ 
$\langle\nabla_1-\nabla_2+\nabla_3+\nabla_5+\nabla_9+\nabla_{10}\rangle,$ 
$\langle\nabla_9+\nabla_{10}\rangle,$ 
$\langle\nabla_1+\nabla_9+\nabla_{10}\rangle,$ 
$\langle\alpha\nabla_1+\nabla_3+\nabla_9+\nabla_{10}\rangle,$ 
$\langle\alpha\nabla_1+\beta\nabla_3+\nabla_5+\nabla_9+\nabla_{10}\rangle^{O(\alpha,\beta)\simeq O(\alpha,-\beta)},$ 
$\langle-\nabla_2+\nabla_3+\nabla_6+\nabla_{10}\rangle,$ 
$\langle\nabla_1-\nabla_2+\nabla_3+\nabla_6+\nabla_{10}\rangle,$  
$\langle-\nabla_2+\nabla_3+\nabla_6+\nabla_7+\nabla_{10}\rangle,$ 
$\langle\nabla_1-\nabla_2+\nabla_6+\nabla_7+\nabla_{10}\rangle,$ 
$\langle-\nabla_2+\nabla_6+\alpha\nabla_7+\nabla_{10}\rangle,$ $\langle\nabla_1+\alpha\nabla_3+\nabla_6-\alpha\nabla_7+\nabla_{10}\rangle,$  $\langle\alpha\nabla_3+\nabla_6+\beta\nabla_7+\nabla_{10}\rangle,$ 
$\langle\nabla_1-\nabla_2-\nabla_3+\nabla_7+\nabla_{10}\rangle,$ 
$\langle-\nabla_2+\alpha\nabla_3+\nabla_7+\nabla_{10}\rangle,$ 
$\langle\nabla_1-\nabla_2-\nabla_3+\nabla_5+\nabla_7+\nabla_{10}\rangle,$  
$\langle-\nabla_2+\alpha\nabla_3+\nabla_5+\nabla_7+\nabla_{10}\rangle,$  $\langle\alpha\nabla_3+\nabla_7+\nabla_{10}\rangle,$  $\langle\alpha\nabla_3+\nabla_5+\nabla_7+\nabla_{10}\rangle,$  
$\langle-\nabla_2+\nabla_3+\nabla_{10}\rangle,$  
$\langle-\nabla_2+\nabla_3+\nabla_5+\nabla_{10}\rangle,$  $\langle\nabla_3+\nabla_{10}\rangle,$ 
$\langle\nabla_3+\nabla_5+\nabla_{10}\rangle.$  
\end{center}

which gives the following new algebras (see section \ref{secteoA}):

\begin{center}

${\rm N}_{17},$ 
${\rm N}_{18},$ 
${\rm N}_{19},$ 
${\rm N}_{20},$
${\rm N}_{21}^{\alpha},$
${\rm N}_{22}^{\alpha},$
${\rm N}_{23},$
${\rm N}_{24},$
${\rm N}_{25}^{\alpha},$
${\rm N}_{26},$
${\rm N}_{27},$
${\rm N}_{28}^{\alpha},$
${\rm N}_{29},$
${\rm N}_{30},$
${\rm N}_{31},$
${\rm N}_{32},$
${\rm N}_{33},$
${\rm N}_{34},$
${\rm N}_{35}^{\alpha},$
${\rm N}_{36},$
${\rm N}_{37}^{\alpha},$
${\rm N}_{38},$
${\rm N}_{39},$
${\rm N}_{40},$
${\rm N}_{41}^{\alpha},$
${\rm N}_{42}^{\alpha,\beta},$
${\rm N}_{43,}$
${\rm N}_{44},$
${\rm N}_{45},$
${\rm N}_{46},$
${\rm N}_{47}^{\alpha},$
${\rm N}_{48}^{\alpha},$
${\rm N}_{49}^{\alpha,\beta},$
${\rm N}_{50},$
${\rm N}_{51},$ 
${\rm N}_{52},$ 
${\rm N}_{53}^{\alpha},$ 
${\rm N}_{54}^{\alpha},$ 
${\rm N}_{55}^{\alpha},$ 
${\rm N}_{56},$ 
${\rm N}_{57},$ 
${\rm N}_{58},$ 
${\rm N}_{59}.$ 
\end{center}

\subsubsection{Central extensions of ${\mathfrak N}_{07}$}
	Let us use the following notations:
	\begin{longtable}{lllllll} 
	$\nabla_1 = [\Delta_{11}],$ & $\nabla_2 = [\Delta_{22}],$ & 
 $\nabla_3 = [\Delta_{23}-\Delta_{13}],$ \\
	$\nabla_4 = [\Delta_{24}],$ & $\nabla_5 = [\Delta_{32}-\Delta_{13}],$ & $\nabla_6 = [\Delta_{41}-\Delta_{14}],$
	\end{longtable}	
	
Take $\theta=\sum\limits_{i=1}^{6}\alpha_i\nabla_i\in {\rm H^2}({\mathfrak N}_{07}).$
	The automorphism group of ${\mathfrak N}_{07}$ consists of invertible matrices of the form
		$$\phi=
	\begin{pmatrix}
	x &  0  & 0 & 0\\
	0 &  x  & 0 & 0\\
	z &  u  & x^2 & 0\\
    t &  v  & 0 & x^2
	\end{pmatrix}.
	$$
	Since
	$$
	\phi^T\begin{pmatrix}
	\alpha_1& 0   & -\alpha_3-\alpha_5 & -\alpha_6\\
	0 & \alpha_2 & \alpha_3 & \alpha_4\\
	0 &  \alpha_5  & 0 & 0\\
	\alpha_6 &  0  & 0 & 0
	\end{pmatrix} \phi=	\begin{pmatrix}
	\alpha_1^* & \alpha^*   & -\alpha_3^*-\alpha_5^* & -\alpha_6^*\\
	\alpha^{**}  & -\alpha^*+\alpha_2^* & \alpha_3^* & \alpha_4^*\\
	0 &  \alpha_5^*  & 0 & 0\\
	\alpha_6^* &  0  & 0 & 0
	\end{pmatrix},
	$$
	 we have that the action of ${\rm Aut} ({\mathfrak N}_{07})$ on the subspace
$\langle \sum\limits_{i=1}^{6}\alpha_i\nabla_i  \rangle$
is given by
$\langle \sum\limits_{i=1}^{6}\alpha_i^{*}\nabla_i\rangle,$
where
\begin{longtable}{lcllcllcl}
$\alpha^*_1$&$=$&$x (x\alpha_1-z(\alpha _3+\alpha_5)),$ &
$\alpha^*_3$&$=$&$x^3 \alpha _3,$ &
$\alpha_5^*$&$=$&$x^3 \alpha _5$\\

$\alpha^*_2$&$=$&$x \left(x \alpha _2+v \alpha _4+z \alpha _5-v \alpha _6\right),$ &
$\alpha_4^*$&$=$&$x^3 \alpha _4,$&
$\alpha_6^*$&$=$&$x^3 \alpha _6.$\\
\end{longtable}

 We are interested only in the cases with 
 $(\alpha_3,\alpha_5)\neq (0,0),$  $(\alpha_4,\alpha_6)\neq (0,0).$

\begin{enumerate}
    \item $\alpha_5\neq0,$ then choosing $z=-\frac{x \alpha _2+v \alpha _4-v \alpha _6}{\alpha _5},$ we have $\alpha_2^*=0.$ Now we consider following subcases:
    \begin{enumerate}
        \item $\alpha_4\neq0$.
        \begin{enumerate}
            \item if $\alpha_3=-\alpha_5,\ \alpha_1=0,$ then we have the family of representatives $\left\langle-\nabla_3+ \alpha\nabla_4+\nabla_5+\beta\nabla_6\right\rangle_{\alpha\neq0};$

            \item if $\alpha_3=-\alpha_5,\ \alpha_1\neq0,$ then choosing $x=\frac{\alpha_1}{\alpha_5},$ we have the family of  representatives 
            $\left\langle\nabla_1-\nabla_3+ \alpha\nabla_4+\nabla_5+\beta\nabla_6\right\rangle_{\alpha\neq0};$
            \item if $\alpha_3\neq-\alpha_5,$ then choosing $u=\frac{-v\alpha_3(\alpha _4+\alpha _6)+\alpha _5(x \alpha _1-v(\alpha_4+\alpha_6))}{2(\alpha _3+\alpha _5)^2},$ we have the family of representatives 
            $\left\langle\gamma\nabla_3+ \alpha\nabla_4+\nabla_5+ \beta\nabla_6\right\rangle_{\alpha\neq0,\gamma\neq-1}.$
        \end{enumerate}
        \item $\alpha_4=0, \alpha_6\neq0$. 
        \begin{enumerate}
            \item if $\alpha_3=-\alpha_5,\ \alpha_1=0,$ then we have the family of representatives $\left\langle-\nabla_3+ \nabla_5 +\beta\nabla_6\right\rangle_{\beta\neq0};$
            \item if $\alpha_3=-\alpha_5,\ \alpha_1\neq0,$ then choosing $x=\frac{\alpha_1}{\alpha_5},$ we have the family of  representatives 
            $\left\langle\nabla_1-\nabla_3+ \nabla_5+ \beta\nabla_6\right\rangle_{\beta\neq0};$
        
            \item if $\alpha_3\neq-\alpha_5,$ then choosing $v=\frac{\alpha _5(x \alpha _1-2 u \alpha_5)-2 u \alpha _3^2-4 u \alpha _3 \alpha _5}{(\alpha _3+\alpha_5) \alpha _6},$ we have the family of representatives              $\left\langle\gamma\nabla_3+\nabla_5+ \beta\nabla_6 \right\rangle_{\gamma\neq -1, \beta\neq0}.$
        \end{enumerate}
    \end{enumerate}
    \item $\alpha_5=0,\ \alpha_3\neq0,$ then choosing $z=\frac{x \alpha _1}{\alpha _3},$ we have $\alpha_1=0.$ 
    \begin{enumerate}
        \item if $\alpha_4=\alpha_6,\ \alpha_2=0,$ then we have the family of representatives  $\left\langle\nabla_3+\alpha\nabla_4+\alpha\nabla_6 \right\rangle_{\alpha\neq0};$
        \item if $\alpha_4=\alpha_6,\ \alpha_2\neq0,$ then choosing $x=\frac{\alpha_2}{\alpha_3},$ we have the family of representatives $\left\langle\nabla_2+\nabla_3+\alpha\nabla_4+\alpha\nabla_6 \right\rangle_{\alpha\neq0};$
        \item if $\alpha_4\neq \alpha_6,$ then choosing $x=1,\ v=-\frac{\alpha _2}{\alpha _4-\alpha _6},$ we have the family of representatives  $\left\langle\nabla_3+\alpha\nabla_4+\beta\nabla_6 \right\rangle_{\alpha\neq\beta,\ (\alpha,\beta)\neq(0,0)}.$
    \end{enumerate}
    
\end{enumerate}

Summarizing all cases, we have the following distinct orbits 

\begin{center} 
$\langle\nabla_1-\nabla_3+\alpha\nabla_4+\nabla_5+\beta\nabla_6\rangle_{(\alpha,\beta)\neq(0,0)},$ $\langle\gamma\nabla_3+\alpha\nabla_4+\nabla_5+\beta\nabla_6\rangle_{(\alpha,\beta)\neq(0,0)},$ 
$\langle\nabla_3+\alpha\nabla_4+\beta\nabla_6\rangle_{(\alpha,\beta)\neq(0,0)},$ 
$\langle\nabla_2+\nabla_3+\alpha\nabla_4+\alpha\nabla_6\rangle_{\alpha\neq0}$
\end{center}
which gives the following new algebras (see section \ref{secteoA}):

\begin{center}
${\rm N}_{60}^{(\alpha,\beta)\neq(0,0)},$
${\rm N}_{61}^{(\alpha,\beta)\neq(0,0)},$
${\rm N}_{62}^{(\alpha,\beta)\neq(0,0)},$
${\rm N}_{63}^{\alpha\neq0}.$
\end{center}

\subsubsection{Central extensions of ${\mathfrak N}_{08}^{\alpha\neq1}$}
	Let us use the following notations:
	\begin{longtable}{lllllll} 
	$\nabla_1 = [\Delta_{12}],$ & $\nabla_2 = [\Delta_{21}],$ & $\nabla_3 = [\Delta_{13}-\alpha\Delta_{23}],$ \\
	$\nabla_4 = [\Delta_{14}-\Delta_{24}],$ & $\nabla_5 = [\Delta_{31}-\Delta_{13}],$ & $\nabla_6 = [\Delta_{42}-\Delta_{24}].$
	\end{longtable}	
	
Take $\theta=\sum\limits_{i=1}^{6}\alpha_i\nabla_i\in {\rm H^2}({\mathfrak N}_{08}^{\alpha\neq1}).$
	The automorphism group of ${\mathfrak N}_{08}^{\alpha\neq1}$ consists of invertible matrices of the form
	$$\phi_1=
	\begin{pmatrix}
	x &  0  & 0 & 0\\
	0 &  x  & 0 & 0\\
	t &  v  & x^2 & 0\\
    u &  w  & 0 & x^2
	\end{pmatrix}, \quad 
	\phi_2(\alpha \neq 0)=
	\begin{pmatrix}
	0 &  \alpha x  & 0 & 0\\
	x &  0  & 0 & 0\\
	t &  v  & 0 & -\alpha^2x^2\\
    u &  w  & -x^2 & 0
	\end{pmatrix}.
	$$
	Since
	$$
	\phi_1^T\begin{pmatrix}
	0 & \alpha_1   & \alpha_3-\alpha_5 & \alpha_4\\
	\alpha_2 & 0 & -\alpha\alpha_3 & -\alpha_4-\alpha_6\\
	\alpha_5 &  0  & 0 & 0\\
	0 &  \alpha_6  & 0 & 0
	\end{pmatrix} \phi_1=	\begin{pmatrix}
\alpha^* & \alpha_1^*+\alpha^{**}   & \alpha_3^*-\alpha_5^* & \alpha_4^*\\
\alpha_2^*-\alpha\alpha^* & -\alpha^{**} & -\alpha\alpha_3^* & -\alpha_4^*-\alpha_6^*\\
\alpha_5^* &  0  & 0 & 0\\
0 &  \alpha_6^*  & 0 & 0
	\end{pmatrix},
	$$
	 we have that the action of ${\rm Aut} ({\mathfrak N}_{08}^{\alpha\neq1})$ on the subspace
$\langle \sum\limits_{i=1}^{6}\alpha_i\nabla_i  \rangle$
is given by
$\langle \sum\limits_{i=1}^{6}\alpha_i^{*}\nabla_i\rangle,$
where
\begin{longtable}{lcllcllcl}
$\alpha^*_1$&$=$&$x(x \alpha _1+v(1-\alpha ) \alpha _3-v \alpha _5+u \alpha_6),$ &
$\alpha^*_3$&$=$&$x^3 \alpha _3,$ &
$\alpha_5^*$&$=$&$x^3 \alpha _5$\\

$\alpha^*_2$&$=$&$x \left(x \alpha _2-u(1-\alpha ) \alpha _4+v \alpha _5-u \alpha _6\right),$ &
$\alpha_4^*$&$=$&$x^3 \alpha _4,$ &
$\alpha_6^*$&$=$&$x^3 \alpha _6.$\\
\end{longtable}

 We are interested only in the cases with 
 $(\alpha_3,\alpha_5)\neq (0,0),$  $(\alpha_4,\alpha_6)\neq (0,0).$

\begin{enumerate}
    \item $\alpha_5 = \alpha_6= 0,$ then $\alpha_3\alpha_4\neq 0,$ and choosing $u=\frac{x \alpha _2}{(1-\alpha ) \alpha _4},$ $v=-\frac{x \alpha _1}{(1-\alpha ) \alpha _3},$ we have $\alpha_1^*=\alpha_2^*=0$ and  obtain the representative  $\left\langle\nabla_3+ \beta \nabla_4 \right\rangle_{\beta \neq0}.$
\item $(\alpha_5, \alpha_6) \neq (0,0),$ $\alpha\neq 0,$ then with an action of a suitable $\phi_2$, we can suppose $\alpha_5\neq0$ and choosing $v=\frac{u \left((1-\alpha ) \alpha _4+\alpha _6\right)-x \alpha _2}{\alpha _5},$ we can suppose $\alpha_2^*=0.$ Now we consider following
subcases:

 \begin{enumerate}
 
    \item $\alpha _4 \alpha _5+(\alpha -1) \alpha _3\alpha _4\neq \alpha _3\alpha _6,$ then choosing $u=\frac{x \alpha _1 \alpha _5}{(1-\alpha ) \left(\alpha _4 \alpha _5-\alpha _3 \left((1-\alpha ) \alpha _4+\alpha _6\right)\right)},$
    we have the family of representatives $\langle\beta\nabla_3+\gamma\nabla_4+\nabla_5+\delta\nabla_6 \rangle_{\gamma+(\alpha-1) \beta\gamma\neq\beta\delta},$

    \item $\alpha _4 \alpha _5+ (\alpha -1) \alpha _3\alpha _4=\alpha _3\alpha _6,$ $\alpha_1=0,$ then we have the family of representatives $\langle\beta\nabla_3+\gamma\nabla_4+\nabla_5+\delta\nabla_6 \rangle_{\gamma+(\alpha-1) \gamma=\beta\delta};$
   
    \item $\alpha _4 \alpha _5+ (\alpha -1) \alpha _3\alpha _4=\alpha _3\alpha _6,$ $\alpha_1\neq 0,$ 
    $\alpha_5\neq(1-\alpha)\alpha_3,$ 
    we have the family of representatives $\langle\nabla_1+\beta\nabla_3+\frac{\beta\delta}{(\alpha-1)\beta+1}\nabla_4+\nabla_5+\delta\nabla_6 \rangle_{\beta\neq \frac{1}{1-\alpha}};$

    \item $\alpha _4 \alpha _5+ (\alpha -1) \alpha _3\alpha _4=\alpha _3\alpha _6,$ $\alpha_1\neq 0,$ 
    $\alpha_5=(1-\alpha)\alpha_3,$
    then $\alpha_3\neq0$ and $\alpha_6=0.$
    Hence, we have the family of representatives $\langle\nabla_1+\frac{1}{1-\alpha}\nabla_3+\beta\nabla_4+\nabla_5\rangle.$

\end{enumerate}

\item $(\alpha_5, \alpha_6) \neq (0,0),$ $\alpha= 0.$ If $\alpha_5\neq0,$ then we obtain the previous cases. Thus, we consider the case of $\alpha_5=0.$ Then $\alpha_3\alpha_6 \neq 0$ and choosing $u=-\frac{x \alpha _1+v \alpha _3}{\alpha _6},$ we can suppose $\alpha_1^*=0.$ Now we consider following
subcases:

    \begin{enumerate}
    \item $\alpha_4\neq -\alpha_6,$ then choosing $v=-\frac{x \alpha _2 \alpha _6}{\alpha _3 \left(\alpha _4+\alpha _6\right)},$ we obtain $\alpha_2^*=0$ and obtain the family of representatives $\langle\beta\nabla_3+\gamma\nabla_4+\nabla_6 \rangle_{\alpha=0, \gamma \neq -1},$
    
    \item $\alpha_4=-\alpha_6,$ $\alpha_2=0,$ then  we have the family of representatives $\langle\beta\nabla_3-\nabla_4+\nabla_6 \rangle_{\alpha=0},$
    
    \item $\alpha_4=-\alpha_6,$ $\alpha_2\neq0,$ then  we have the family of  representatives $\langle\nabla_2+ \beta\nabla_3-\nabla_4+\nabla_6 \rangle_{\alpha=0}.$

    \end{enumerate}

\end{enumerate}

Summarizing all cases for  the algebra ${\mathfrak N}_{08}^{\alpha\neq1}$, we have the following distinct orbits:
\begin{longtable}{lll}
$\langle\nabla_3+\beta\nabla_4\rangle_{\alpha\neq 1, \beta\neq0},$  & $\langle\beta\nabla_3+\gamma\nabla_4+\nabla_5+\delta\nabla_6\rangle_{\alpha\neq 1},$\\

$\langle\nabla_1+\beta\nabla_3+\frac{\beta\delta}{(\alpha-1)\beta+1}\nabla_4+\nabla_5+\delta\nabla_6 \rangle_{\beta\neq \frac{1}{1-\alpha}, \ \alpha\neq 1},$ &
$\langle\nabla_1+\frac{1}{1-\alpha}\nabla_3+\beta\nabla_4+\nabla_5\rangle_{\alpha\neq 1},$ \\

$\langle\beta\nabla_3+\gamma\nabla_4+\nabla_6\rangle_{\alpha=0},$ & $\langle\nabla_2+\beta\nabla_3-\nabla_4+\nabla_6\rangle_{\alpha=0},$
 
\end{longtable}
which gives the following new algebras (see section \ref{secteoA}):

\begin{center}
${\rm N}_{64}^{\alpha\neq1, \beta\neq 0},$
${\rm N}_{65}^{\alpha\neq1,\beta,\gamma,\delta},$
${\rm N}_{66}^{\alpha\neq 1,\beta\neq \frac{1}{1-\alpha},\delta},$
${\rm N}_{67}^{\alpha\neq1,\beta},$
${\rm N}_{68}^{\beta,\gamma},$
${\rm N}_{69}^{\beta}.$
\end{center}

\subsubsection{Central extensions of ${\mathfrak N}_{08}^{1}$}
	Let us use the following notations:
	\begin{longtable}{llll}
 $\nabla_1 = [\Delta_{12}],$ & $\nabla_2 = [\Delta_{21}],$ & $\nabla_3 = [\Delta_{13}-\Delta_{23}],$ &
$\nabla_4 = [\Delta_{14}-\Delta_{24}],$ \\ $\nabla_5 = [\Delta_{31}-\Delta_{13}],$ & $\nabla_6 = [\Delta_{42}-\Delta_{24}],$ & 
\multicolumn{2}{l}{$\nabla_7 = [\Delta_{32}+\Delta_{41}-\Delta_{23}-\Delta_{14}].$}
	\end{longtable}	
	
Take $\theta=\sum\limits_{i=1}^{7}\alpha_i\nabla_i\in {\rm H^2}({\mathfrak N}_{08}^{1}).$
	The automorphism group of ${\mathfrak N}_{08}^{1}$ consists of invertible matrices of the form
	$$\phi=
	\begin{pmatrix}
	x &  y  & 0 & 0\\
	x+y-z &  z  & 0 & 0\\
	t &  v  & x(z-y) & y(z-y)\\
    u &  w  & (x+y-z)(z-y) & z(z-y)
	\end{pmatrix}.
	$$
	Since
	$$
	\phi^T\begin{pmatrix}
	0 &  \alpha_1   & \alpha_3-\alpha_5 & \alpha_4-\alpha_7\\
	\alpha_2 & 0 & -\alpha_3-\alpha_7 & -\alpha_4- \alpha_6\\
	\alpha_5 &  \alpha_7  & 0 & 0\\
	\alpha_7 &  \alpha_6  & 0 & 0
	\end{pmatrix} \phi=	\begin{pmatrix}
\alpha^* & \alpha_1^*+\alpha^{**} & \alpha_3^*-\alpha_5^* & \alpha_4^*-\alpha_7^*\\
\alpha_2^*-\alpha^* & -\alpha^{**} & -\alpha_3^*-\alpha_7^* & -\alpha_4^*-\alpha_6^*\\
\alpha_5^* &  \alpha_7^*  & 0 & 0\\
\alpha_7^* &  \alpha_6^*  & 0 & 0
	\end{pmatrix},
	$$
	 we have that the action of ${\rm Aut} ({\mathfrak N}_{08}^{1})$ on the subspace
$\langle \sum\limits_{i=1}^{7}\alpha_i\nabla_i  \rangle$
is given by
$\langle \sum\limits_{i=1}^{7}\alpha_i^{*}\nabla_i\rangle,$
where
\begin{longtable}{lcl}
$\alpha^*_1$&$=$&$(x+y) z \alpha _1+y (x+y) \alpha _2-(vx-ty)\alpha_5-(w(x+y)-z(u+w))\alpha _6$\\
&&\multicolumn{1}{r}{$-(x(v+w)-y(u-v)-z(t+v))\alpha _7,$} \\
$\alpha^*_2$&$=$&$(x+y) (x+y-z)\alpha _1+x (x+y)\alpha_2+(vx-ty)\alpha_5+(w(x+y)-z(u+w))\alpha_6 $\\
&& \multicolumn{1}{r}{$+(x(v+w)-y(u-v)-z(t+v))\alpha _7,$}\\
$\alpha^*_3$&$=$&$(z-y)^2(x \alpha _3+(x+y-z) \alpha _4),$ \\
$\alpha_4^*$&$=$&$(z-y)^2(y \alpha _3+z \alpha_4),$\\
$\alpha_5^*$&$=$&$(z-y)(x^2 \alpha _5+(x+y-z)((x+y-z) \alpha _6+2 x \alpha _7))$\\
$\alpha_6^*$&$=$&$(z-y)(y^2 \alpha _5+z(z \alpha _6+2 y \alpha _7)),$\\
$\alpha_7^*$&$=$&$(z-y)((x+y-z)(z \alpha _6+y \alpha _7)+x(y \alpha _5+z \alpha _7)).$\\
\end{longtable}

 We are interested only in the cases with 
 \begin{center}
$(\alpha_3,\alpha_5,\alpha_7)\neq (0,0,0),$  $(\alpha_4,\alpha_6,\alpha_7)\neq (0,0,0).$ 
 \end{center} 

\begin{enumerate}
    \item $(\alpha_5,\alpha_6,\alpha_7)=(0,0,0),$ then $\alpha_3\neq0,\ \alpha_4\neq0.$ If $\alpha_4\neq-\alpha_3,$ then choosing $z=-\frac{y \alpha _3}{\alpha _4},$ we obtain that $\alpha_4^*=0$ which implies $(\alpha_4^*,\alpha_6^*,\alpha_7^*) = (0,0,0).$ Thus, we have that $\alpha_3=-\alpha_4.$
    \begin{enumerate}
        \item $(\alpha_1,\alpha_2)=(0,0),$ then we have the representative $\left\langle\nabla_3-\nabla_4\right\rangle;$
        \item $(\alpha_1,\alpha_2)\neq(0,0),$ without loss of generality, we can suppose $\alpha_1\neq0.$
        \begin{enumerate}
            \item $\alpha_1=-\alpha_2,$ then choosing $x=\frac{ \alpha_3}{\alpha_1},$ $y=0,$ $z=1,$ we have the representative $\left\langle\nabla_1-\nabla_2+\nabla_3-\nabla_4\right\rangle;$
 
            \item $\alpha_1\neq-\alpha_2,$ then choosing $x=0,\ y=-\frac{\alpha _1^3}{\left(\alpha _1+\alpha _2\right){}^2 \alpha _3},\ z= \frac{\alpha _1^3\alpha _2}{\left(\alpha _1+\alpha _2\right)^2 \alpha _1\alpha _3},$ we have the representative $\left\langle\nabla_2+\nabla_3-\nabla_4\right\rangle.$
        \end{enumerate}
    \end{enumerate}
    \item $(\alpha_5,\alpha_6,\alpha_7)\neq(0,0,0),$ then without loss of generality we can assume $\alpha_5\neq0$ and  consider following subcases:
    \begin{enumerate}
        \item $\alpha_6\alpha_5 =\alpha_7^2, \alpha_7=-\alpha_5,\ \alpha_4=-\alpha_3,$ $ \alpha_1=-\alpha_2,$ then taking
        $v=u=w=0,$ $t=\frac{(x+y) \alpha _1}{\alpha _5},$ we have the family of representatives $\left\langle\beta\nabla_3 -\beta\nabla_4+\nabla_5+\nabla_6-\nabla_7 \right\rangle;$
        
       \item $\alpha_6\alpha_5 =\alpha_7^2,  \alpha_7=-\alpha_5,\  \alpha_4=-\alpha_3,\ \alpha_1\neq-\alpha_2,$ then  taking 
       \begin{center}$y=v=u=w=0,$ $z=x=\frac{(\alpha _1+\alpha _2)}{\alpha _5},$ $t=\frac{\alpha_1(\alpha _1+\alpha _2)}{\alpha _5^2},$ 
       \end{center} we have the family of representatives $\left\langle\nabla_2 + \beta\nabla_3 -\beta\nabla_4+\nabla_5+\nabla_6-\nabla_7 \right\rangle;$

        \item $\alpha_6\alpha_5 =\alpha_7^2, \  \alpha_7=-\alpha_5,\ \alpha_4\neq-\alpha_3,$ then we
        can choose $y$ and $z$ such that $\alpha_4^*=0,$ $\alpha_3^*\neq0.$ Thus we can suppose $\alpha_4=0,$ 
         moreover taking $v=u=w=0,$ $t=\frac{x \alpha _1}{\alpha _5},$ we can also suppose  $\alpha_1=0.$
         \begin{enumerate}
        \item if $\alpha_2=0,$ then choosing $x=1,$ $z=\frac{\alpha_3}{\alpha_5},$ we have the representative $\left\langle \nabla_3+\nabla_5+\nabla_6-\nabla_7 \right\rangle;$
        \item if $\alpha_2\neq0,$ then choosing $x=\frac{\alpha_2\alpha _5^2}{\alpha_3^3},$ $z=\frac{\alpha_2 \alpha_5}{\alpha_3^2},$ we have the representative 
        $\left\langle \nabla_2+ \nabla_3+\nabla_5+\nabla_6-\nabla_7 \right\rangle.$ 
         \end{enumerate}
        \item $\alpha_6\alpha_5 =\alpha_7^2, \alpha_5\neq-\alpha_7,$ then choosing $y=-\frac {z\alpha_7}{\alpha_5},$ $w=0,$ $v =\frac{z(\alpha_1\alpha_5 - \alpha_2\alpha_7)}{\alpha_5(\alpha_5+\alpha_7)},$ we can suppose $\alpha_1=\alpha_6=\alpha_7=0.$ Since $(\alpha_4,\alpha_6,\alpha_7)\neq (0,0,0),$ we have that $\alpha_4\neq 0.$
          \begin{enumerate}
        \item $\alpha_2 = 0,$ then choosing $x=z\sqrt{\frac{\alpha_4}{\alpha_5}},$  we have the family of representatives $\left\langle \beta \nabla_3+ \nabla_4+\nabla_5\right\rangle.$
        \item $\alpha_2 \neq 0,$ then choosing $x=\frac{\alpha_2}{\alpha_5}\sqrt{\frac{\alpha_4}{\alpha_5}},$ $z=\frac{\alpha_2}{\alpha_5},$ we have the representative $\left\langle \nabla_2+\beta\nabla_3+ \nabla_4+\nabla_5\right\rangle.$
        
         \end{enumerate}
         
         \item $\alpha_6\alpha_5  \neq \alpha_7^2 ,$ then choosing suitable value of $z$ and $y$ such that $y-z\neq 0,$
         and we can suppose $\alpha_6=0$ and $\alpha_7 \neq 0.$
       \begin{enumerate}
        \item $\alpha_5 =-2\alpha_7,$ $\alpha_4= 0,$
        $\alpha_1=-\alpha_2,$ then choosing $y=u=v=w=0,$ $t= -\frac{x \alpha _1}{\alpha _7},$
        we have the family of  representatives $\left\langle\beta\nabla_3-2\nabla_5+\nabla_7\right\rangle;$
        \item $\alpha_5 =-2\alpha_7,$ $\alpha_4= 0,$
        $\alpha_1\neq-\alpha_2,$ then choosing $y=u=v=w=0,$ $t= -\frac{x \alpha _1}{\alpha _7},$ $z=\frac{x(\alpha_1+\alpha_2)}{\alpha_7},$
        we have the family of representatives $\left\langle\nabla_2+\beta\nabla_3-2\nabla_5+\nabla_7\right\rangle;$

        \item $\alpha_5 =-2\alpha_7,$ $\alpha_4\neq 0,$
        $\alpha_1=-\alpha_2,$ then choosing $y=u=v=w=0,$ $t= -\frac{x \alpha _1}{\alpha _7},$ $z=\frac{x\alpha_7}{\alpha_4},$
        we have the family of representatives $\left\langle\beta\nabla_3+\nabla_4-2\nabla_5+\nabla_7\right\rangle;$
       
        \item $\alpha_5 =-2\alpha_7,$ $\alpha_4\neq 0,$
        $\alpha_1\neq-\alpha_2,$ then choosing 
        \begin{center}$y=u=v=w=0,$ $t= -\frac{x \alpha _1}{\alpha _7},$
        $x=\frac{\left(\alpha _1+\alpha _2\right) \alpha _4^2}{\alpha _7^3},$
                $z=\frac{\left(\alpha _1+\alpha _2\right) \alpha _4}{\alpha _7^2},$
       \end{center} we have the family of representatives $\left\langle\nabla_2+\beta\nabla_3+\nabla_4-2\nabla_5+\nabla_7\right\rangle;$
       \item $\alpha_5 \neq-2\alpha_7 ,$ $\alpha_1=-\alpha_2,$ then choosing $y=u=v=w=0,$ $z=\frac{x(2\alpha _7+\alpha _5)}{\alpha_7},$ $t= -\frac{x \alpha _1}{\alpha _7},$ 
        we have the family of  representatives $\left\langle\beta\nabla_3+\gamma\nabla_4+\nabla_7\right\rangle;$

        \item $\alpha_5 \neq-2\alpha_7 ,$ $\alpha_1\neq-\alpha_2,$ then choosing 
        \begin{center}
            $y=u=v=w=0,$ $x=\frac{4 \left(\alpha _1+\alpha _2\right) \alpha _7}{\left(\alpha _5+2 \alpha _7\right){}^2},$ $z=\frac{2 \left(\alpha _1+\alpha _2\right)}{\alpha _5+2 \alpha _7},$ $t= -\frac{x \alpha _1}{\alpha _7},$
        \end{center} 
        we have the family of  representatives $\left\langle\nabla_2+\beta\nabla_3+\gamma\nabla_4+\nabla_7\right\rangle.$
        
       \end{enumerate}
    \end{enumerate}
\end{enumerate}

Summarizing all cases for the algebra ${\mathfrak N}_{08}^{\alpha}$, we have the following distinct orbits  

\begin{center}

$\left\langle\nabla_3-\nabla_4\right\rangle_{\alpha=1},$ 
$\left\langle\nabla_1-\nabla_2+\nabla_3-\nabla_4\right\rangle_{\alpha=1},$ 
$\left\langle\nabla_2+\nabla_3-\nabla_4\right\rangle_{\alpha=1},$ 
$\left\langle\beta\nabla_3 -\beta\nabla_4+\nabla_5+\nabla_6-\nabla_7 \right\rangle_{\alpha=1},$ 
$\left\langle\nabla_2 + \beta\nabla_3 -\beta\nabla_4+\nabla_5+\nabla_6-\nabla_7 \right\rangle_{\alpha=1},$ 
$\left\langle \nabla_3+\nabla_5+\nabla_6-\nabla_7 \right\rangle_{\alpha=1},$ 
$\left\langle \nabla_2+ \nabla_3+\nabla_5+\nabla_6-\nabla_7 \right\rangle_{\alpha=1},$ 
$\left\langle \beta \nabla_3+ \nabla_4+\nabla_5\right\rangle_{\alpha=1},$ 
$\left\langle \nabla_2+\beta\nabla_3+ \nabla_4+\nabla_5\right\rangle_{\alpha=1},$ 
$\left\langle\beta\nabla_3-2\nabla_5+\nabla_7\right\rangle_{\alpha=1},$ 
$\left\langle\nabla_2+\beta\nabla_3-2\nabla_5+\nabla_7\right\rangle_{\alpha=1},$ 
$\left\langle\beta\nabla_3+\nabla_4-2\nabla_5+\nabla_7\right\rangle_{\alpha=1},$ 
$\left\langle\nabla_2+\beta\nabla_3+\nabla_4-2\nabla_5+\nabla_7\right\rangle_{\alpha=1},$ 
$\left\langle\beta\nabla_3+\gamma\nabla_4+\nabla_7\right\rangle_{\alpha=1},$ 
$\left\langle\nabla_2+\beta\nabla_3+\gamma\nabla_4+\nabla_7\right\rangle_{\alpha=1},$
\end{center}

which gives the following new algebras (see section \ref{secteoA}):

\begin{center}
${\rm N}_{70},$
${\rm N}_{71},$
${\rm N}_{72},$
${\rm N}_{73}^{\beta},$
${\rm N}_{74}^{\beta},$
${\rm N}_{75},$
${\rm N}_{76},$
${\rm N}_{77}^{\beta},$
${\rm N}_{78}^{\beta},$
${\rm N}_{79}^{\beta},$
${\rm N}_{80}^{\beta},$
${\rm N}_{81}^{\beta},$
${\rm N}_{82}^{\beta},$
${\rm N}_{83}^{\beta,\gamma},$
${\rm N}_{84}^{\beta,\gamma}.$
\end{center}

\subsubsection{Central extensions of ${\mathfrak N}_{12}$}
	Let us use the following notations:
	\begin{longtable}{lllllll} 
	$\nabla_1 = [\Delta_{11}],$ & $\nabla_2 = [\Delta_{13}],$ & 
	$\nabla_3 = [\Delta_{14}-\Delta_{41}],$ \\
	$\nabla_4 = [\Delta_{22}],$ & 
	$\nabla_5 = [\Delta_{23}-\Delta_{32}],$ & 
	$\nabla_6 = [\Delta_{24}].$ 
	\end{longtable}	
	
Take $\theta=\sum\limits_{i=1}^{6}\alpha_i\nabla_i\in {\rm H^2}({\mathfrak N}_{12}).$
	The automorphism group of ${\mathfrak N}_{12}$ consists of invertible matrices of the form
		$$\phi_1=
	\begin{pmatrix}
	x &  0  & 0 & 0\\
	0 &  y  & 0 & 0\\
	z &  v  & xy & 0\\
    u &  t  & 0 & xy
	\end{pmatrix}, \quad 
	\phi_2=
	\begin{pmatrix}
	0 &  x  & 0 & 0\\
	y &  0  & 0 & 0\\
	z &  v  & 0 & x y\\
    u &  t  & x y & 0
	\end{pmatrix}.
	$$
	Since
	$$
	\phi^T_1\begin{pmatrix}
	\alpha_1 & 0 & \alpha_2 & \alpha_3\\
	0 & \alpha_4 & \alpha_5 & \alpha_6\\
	0 &  -\alpha_5  & 0 & 0\\
	-\alpha_3 &  0  & 0 & 0
	\end{pmatrix} \phi_1=\begin{pmatrix}
    \alpha_1^* & \alpha^* & \alpha_2^* & \alpha_3^*\\
	\alpha^{**} & \alpha_4^* & \alpha_5^* & \alpha_6^*\\
	0 &  -\alpha_5^*  & 0 & 0\\
	-\alpha_3^* &  0  & 0 & 0
	\end{pmatrix},
	$$
	 we have that the action of ${\rm Aut} ({\mathfrak N}_{12})$ on the subspace
$\langle \sum\limits_{i=1}^{6}\alpha_i\nabla_i  \rangle$
is given by
$\langle \sum\limits_{i=1}^{6}\alpha_i^{*}\nabla_i\rangle,$
where
\begin{longtable}{lcllcllcl}
$\alpha^*_1$&$=$&$x(x\alpha_1+z\alpha_2)$ &
$\alpha^*_2$&$=$&$x^2 y \alpha _2,$&
$\alpha^*_3$&$=$&$x^2 y \alpha _3,$ \\

$\alpha_4^*$&$=$&$y(y \alpha _4+v \alpha _6),$&
$\alpha_5^*$&$=$&$x y^2\alpha_5,$ &
$\alpha_6^*$&$=$&$x y^2\alpha_6.$\\
\end{longtable}

 We are interested only in the cases with 
 \begin{center}
$(\alpha_2,\alpha_5)\neq (0,0),$  $(\alpha_3,\alpha_6)\neq (0,0).$ 
 \end{center} 
\begin{enumerate}
    \item $(\alpha_2,\alpha_6)=(0,0),$ then $\alpha_3\neq0,\ \alpha_5\neq0$. 
    \begin{enumerate}
        \item $\alpha_4=0,\ \alpha_1=0,$ then choosing $x=1,\ y=\frac{\alpha_3}{\alpha_5},$ we have the representative $\left\langle\nabla_3+\nabla_5\right\rangle;$
        \item $\alpha_4=0,\ \alpha_1\neq0,$ then choosing $x=\frac{\alpha_1\alpha_5}{\alpha_3^2},\ y=\frac{\alpha_1}{\alpha_3},$ we have the representative $\left\langle\nabla_1+ \nabla_3+\nabla_5\right\rangle;$
        \item $\alpha_4\neq0,$ then choosing $x=\frac{\alpha_4}{\alpha_5},\ y=\frac{\alpha_3\alpha_4}{\alpha_5^2},$ we have the family of  representatives $\left\langle\alpha\nabla_1+ \nabla_3+\nabla_4+\nabla_5\right\rangle;$
    \end{enumerate}
    \item $(\alpha_2,\alpha_6)\neq(0,0),$ then without loss of generality, we can suppose $\alpha_6\neq0$ and choosing $v=-\frac{y \alpha _4}{\alpha _6},$ we have $\alpha_4^*=0.$
    \begin{enumerate}
        \item $\alpha_2\neq0,$ then choosing $x=1,\ y=\frac{\alpha_2}{\alpha_4},\ z=-\frac{\alpha_1}{\alpha _2},$ we have the family of representatives   $\left\langle\nabla_2+\alpha\nabla_3+\beta\nabla_5+\nabla_6\right\rangle;$
        \item $\alpha_2=0,\ \alpha_3=\alpha_1=0,$ then we have the family of representatives  $\left\langle\alpha\nabla_5+\nabla_6\right\rangle_{\alpha\neq0};$
        \item $\alpha_2=0,\ \alpha_3=0,\ \alpha_1\neq0,$ then choosing $x=\frac{\alpha_6}{\alpha_1},\ y=1,$ we have the  family of representatives $\left\langle\nabla_1+\alpha\nabla_5+\nabla_6\right\rangle_{\alpha\neq0};$
        \item $\alpha_2=0,\ \alpha_3\neq0,\ \alpha_1=0,$ then  choosing $x=\frac{\alpha_6}{\alpha_3},\ y=1,$ we have the  family of representatives $\left\langle\nabla_3+\alpha\nabla_5+\nabla_6\right\rangle_{\alpha\neq0};$
        \item $\alpha_2=0,\ \alpha_3\neq0,\ \alpha_1\neq0,$ then  choosing $x=\frac{\alpha_1\alpha_6}{\alpha_3^2},\ y=\frac{\alpha_1}{\alpha_3},$ we have the  family of representatives $\left\langle\nabla_1+\nabla_3+\alpha\nabla_5+\nabla_6\right\rangle_{\alpha\neq0};$
    \end{enumerate}
\end{enumerate}

Summarizing all cases, we have the following distinct orbits 

\begin{center} 
$\langle\nabla_3+\nabla_5\rangle,$  
$\langle\nabla_1+\nabla_3+\nabla_5\rangle,$
$\langle\alpha\nabla_1+\nabla_3+\nabla_4+\nabla_5\rangle^{O(\alpha)\simeq O({\alpha}^{-1})},$ $\langle\nabla_2+\alpha\nabla_3+\beta\nabla_5+\nabla_6\rangle^{O(\alpha,\beta)\simeq O(\beta,\alpha)},$
$\langle\alpha\nabla_5+\nabla_6\rangle_{\alpha\neq0}$,  $\langle\nabla_1+\alpha\nabla_5+\nabla_6\rangle_{\alpha\neq0},$ $\langle\nabla_3+\alpha\nabla_5+\nabla_6\rangle_{\alpha\neq0},$  $\langle\nabla_1+\nabla_3+\alpha\nabla_5+\nabla_6\rangle_{\alpha\neq0},$
\end{center}

which gives the following new algebras (see section \ref{secteoA}):

\begin{center}
${\rm N}_{85},$
${\rm N}_{86},$
${\rm N}_{87}^{\alpha},$
${\rm N}_{88}^{\alpha,\beta},$
${\rm N}_{89}^{\alpha\neq0},$
${\rm N}_{90}^{\alpha\neq0},$
${\rm N}_{91}^{\alpha\neq0},$
${\rm N}_{92}^{\alpha\neq0}.$

\end{center}

\subsubsection{Central extensions of ${\mathfrak N}_{13}$}
	Let us use the following notations:
	\begin{longtable}{lllllll} 
	$\nabla_1 = [\Delta_{21}],$ & $\nabla_2 = [\Delta_{22}],$ & 
	$\nabla_3 = [\Delta_{14}+\Delta_{23}],$ \\
	$\nabla_4 = [\Delta_{24}-\Delta_{13}+2\Delta_{14}],$ & 
	$\nabla_5 = [\Delta_{42}-2\Delta_{13}+\Delta_{31}-2\Delta_{32}],$ & 
	$\nabla_6 = [\Delta_{41}-2\Delta_{14}-\Delta_{32}].$ 
	\end{longtable}	
	
Take $\theta=\sum\limits_{i=1}^{6}\alpha_i\nabla_i\in {\rm H^2}({\mathfrak N}_{13}).$
	The automorphism group of ${\mathfrak N}_{13}$ consists of invertible matrices of the form
	$$\phi_1=
	\begin{pmatrix}
	x &  0  & 0 & 0\\
	0 & x  & 0 & 0\\
	z &  u  & x^2 & 0\\
    t &  v  & 0 & x^2
	\end{pmatrix}, \quad 
	\phi_2=
	\begin{pmatrix}
	0 &  x  & 0 & 0\\
	x &  0  & 0 & 0\\
	z &  u  & -x^2 & 2x^2\\
    t &  v  & 0 & x^2
	\end{pmatrix}.
	$$
	Since
	$$
	\phi^T_1\begin{pmatrix}
	0 & 0 & -\alpha_4-2\alpha_5 & \alpha_3+2\alpha_4-2\alpha_6\\
	\alpha_1 & \alpha_2 & \alpha_3 & \alpha_4\\
	\alpha_5 &  -2\alpha_5-\alpha_6  & 0 & 0\\
	\alpha_6 &  \alpha_5  & 0 & 0
	\end{pmatrix} \phi_1=$$
	$$=\begin{pmatrix}
    \alpha^* & \alpha^{**} & -\alpha_4^*-2\alpha_5^* & \alpha_3^*+2\alpha_4^*-2\alpha_6^*\\
	\alpha_1^*-\alpha^{**} & \alpha_2^*+\alpha^*+2\alpha^{**} & \alpha_3^* & \alpha_4^*\\
	\alpha_5^* &  -2\alpha_5^*-\alpha_6^*  & 0 & 0\\
	\alpha_6^* &  \alpha_5^*  & 0 & 0
	\end{pmatrix},
	$$
	 we have that the action of ${\rm Aut} ({\mathfrak N}_{13})$ on the subspace
$\langle \sum\limits_{i=1}^{6}\alpha_i\nabla_i  \rangle$
is given by
$\langle \sum\limits_{i=1}^{6}\alpha_i^{*}\nabla_i\rangle,$
where
for $\phi_1$: 
\begin{longtable}{lcl}
$\alpha^*_1$&$=$&$x(x\alpha_1+(v+z)\alpha_3+(t-u+2v)\alpha _4+(t-u-2z)\alpha _5-(v+z) \alpha_6),$ \\
$\alpha^*_2$&$=$&$x(x\alpha_2-(t-u+2v)\alpha_3-(2t-2u+3v-z)\alpha_4-$\\
&&\multicolumn{1}{r}{$(2t-2u-v-5z)\alpha_5 +(t-u+4v+2z)\alpha_6),$}\\
$\alpha^*_3$&$=$&$x^3 \alpha _3,$ \\
$\alpha_4^*$&$=$&$x^3 \alpha _4,$\\
$\alpha_5^*$&$=$&$x^3 \alpha _5,$\\
$\alpha_6^*$&$=$&$x^3 \alpha _6.$\\
\end{longtable}

for $\phi_2$: \begin{longtable}{lcl}
$\alpha^*_1$&$=$&$x(x\alpha_1+(t+u)\alpha_3+(2t+v-z)\alpha_4-(2u-v+z)\alpha_5- (t+u) \alpha_6),$ \\
$\alpha^*_2$&$=$&$-x(2x\alpha_1+x\alpha _2+(2u-v+z)\alpha_3+(t+u)\alpha _4+(t+u)\alpha_5 +(2t+v-z)\alpha _6),$\\
$\alpha^*_3$&$=$&$x^3(\alpha_4+2\alpha_5),$ \\
$\alpha_4^*$&$=$&$x^3(\alpha_3-2(2 \alpha _5+\alpha_6)),$\\
$\alpha_5^*$&$=$&$x^3(2\alpha_5+\alpha_6),$\\
$\alpha_6^*$&$=$&$-x^3(3\alpha_5+2\alpha_6).$\\
\end{longtable}

 We are interested only in the cases with 
$(\alpha_3,\alpha_4,\alpha_5,\alpha_6)\neq (0,0,0,0).$ 
\begin{enumerate}
    \item $(\alpha_5,\alpha_6)=(0,0).$ Let us consider the following subcases for automorphisms of type $\phi_1$:
    \begin{enumerate}
        \item $\alpha_3=0,$ then $\alpha_4\neq0$, then choosing $u=v=0,$  $z= -\frac{x (2 \alpha _1+\alpha _2)}{\alpha _4},$ $t=-\frac{x \alpha _1}{\alpha _4},$
        we have the representative  $\left\langle\nabla_4\right\rangle;$
        \item $\alpha_3\neq0,\ \alpha_3=-\alpha_4,\ \alpha_1=-\alpha_2,$ then choosing $u=v=z=0,$ $t=\frac{x\alpha_1}{\alpha_3,}$ we have the representative  $\left\langle\nabla_3-\nabla_4\right\rangle;$
        \item $\alpha_3\neq0,\ \alpha_3=-\alpha_4,\ \alpha_1\neq-\alpha_2,$ then choosing $u=v=z=0,$ $t=-\frac{x\alpha_2}{\alpha_3,},$ $x=\frac{\alpha_1+\alpha_2}{\alpha_3},$ we have the representative  $\left\langle\nabla_1+\nabla_3-\nabla_4\right\rangle;$
    
        \item $\alpha_3\neq0,\ \alpha_4\neq-\alpha_3,$ then choosing 
        \begin{center}$u=v=0,\ z= -\frac{x \left(\alpha _2 \alpha _4+\alpha _1 \left(\alpha _3+2 \alpha _4\right)\right)}{\left(\alpha _3+\alpha _4\right){}^2},$ $t=\frac{x \left(\alpha _2 \alpha _3-\alpha _1 \alpha _4\right)}{\left(\alpha _3+\alpha _4\right){}^2},$ \end{center} we have the family of representatives  $\left\langle\nabla_3+\alpha\nabla_4\right\rangle_{\alpha \neq-1};$
    \end{enumerate}
   
    \item $(\alpha_5,\alpha_6)\neq (0,0),$ then without loss of generality (after an action of $\phi_2$), we can suppose $\alpha_5\neq0$ and consider
    the following subcases for automorphisms of type $\phi_1$:
    \begin{enumerate}
        \item $\alpha_4=-\alpha_5,\ \alpha_3=2\alpha_5+\alpha_6,\ \alpha_1=0,$ then choosing $u=v=z=0,\ t=\frac{x \alpha _2}{2 \alpha _5},$ we have the family of representatives  $\left\langle(2+\alpha)\nabla_3-\nabla_4+\nabla_5+ \alpha\nabla_6\right\rangle;$
        \item $\alpha_4=-\alpha_5,\ \alpha_3=2\alpha_5+\alpha_6,\ \alpha_1\neq0,$ then choosing $u=v=z=0,\ t=\frac{\alpha_1\alpha_2} {2\alpha_5^2},$ $x=\frac{\alpha_1}{\alpha_5},$ we have the family of representatives  $\left\langle\nabla_1+(2+\alpha)\nabla_3-\nabla_4+ \nabla_5+\alpha\nabla_6\right\rangle;$

\item $\alpha_4=-\alpha_5,\ \alpha_3\neq2\alpha_5+\alpha_6, \ \alpha_3\neq \alpha_6,$ then 
        choosing $t=u=0,$ 
\begin{center}
    $x=(\alpha_3-\alpha_6) (\alpha_3-2 \alpha_5-\alpha_6),$
$z=2 \alpha_1 (\alpha_5+\alpha_6)+\frac{1}{2}\alpha_2 (2 \alpha_5+\alpha_6-\alpha_3)-\alpha_1 \alpha_3$\\
 and $v=\frac{1}{2}\alpha_2 (\alpha_3-2 \alpha_5-\alpha_6)-\alpha_1 (2 \alpha_5+\alpha_6),$ 
\end{center}
we have the family of representatives 
$\left\langle  \alpha\nabla_3-\nabla_4+ \nabla_5+\gamma\nabla_6\right\rangle_{\alpha\notin \{\gamma,  \gamma+2\} };$

 \item $\alpha_4=-\alpha_5,\ \alpha_3\neq2\alpha_5+\alpha_6, \ \alpha_3= \alpha_6,$ then 
        choosing 
$z=\frac{x \alpha_1}{2 \alpha_5}$ and 
$v=0,$ 
we have two families of representatives 
$\left\langle   \alpha\nabla_3-\nabla_4+ \nabla_5+\alpha\nabla_6\right\rangle$ 
and 
$\left\langle  \nabla_2 +\alpha\nabla_3-\nabla_4+ \nabla_5+\alpha\nabla_6\right\rangle,$  
depending on $\alpha_2 \alpha_5=-\alpha_1 (\alpha_3+2 \alpha_5)$ or not.
The last family for $\alpha\neq -2$ under an action of $\phi_2$
       gives a case with $\alpha_4\neq-\alpha_5,$ which will be considered below.
       
        \item $\alpha_4\neq-\alpha_5,$
        $\alpha_3^2 + \alpha_4^2 + 2 \alpha_4 \alpha_5 + 
 2 \alpha_3 \alpha_4  + (\alpha_5 + \alpha_6)^2\neq 2 \alpha_3\alpha_6,$ then by choosing 
\begin{center} 
$x=\alpha_3^2+\alpha_4^2+2 \alpha_4 \alpha_5+2 \alpha_3 (\alpha_4-\alpha_6)+(\alpha_5+\alpha_6)^2,$ 
 $u=\alpha_2 (\alpha_6-\alpha_3-2 \alpha_4)+\alpha_1 (\alpha_5+4 \alpha_6-2 \alpha_3-3 \alpha_4),$ 
$v=-\alpha_2 (\alpha_4+\alpha_5)-\alpha_1 (\alpha_3+2 \alpha_4+2 \alpha_5-\alpha_6)$
 and $z=t=0,$      \end{center}        we have the family of representatives \begin{center}
     $\left\langle\alpha\nabla_3+\beta\nabla_4+ \nabla_5+\gamma\nabla_6\right\rangle_{\beta\neq-1, \ 
 \alpha^2+\beta^2+2\beta+2\alpha\beta+(1+\gamma)^2\neq 2\alpha\gamma};$ 

 \end{center}
   \item $\alpha_4\neq-\alpha_5,$
        $\alpha_3^2 + \alpha_4^2 + 2 \alpha_4 \alpha_5 + 
 2 \alpha_3 \alpha_4  + (\alpha_5 + \alpha_6)^2= 2 \alpha_3\alpha_6,$ then by choosing
 $u=\frac{x \alpha_1}{\alpha_4+\alpha_5}$ and $z=t=0,$
 we have two families of representatives 
\begin{center}
$\left\langle \nabla_2+ \Xi \nabla_3+\alpha\nabla_4+ \nabla_5 +\beta\nabla_6 \right\rangle_{\alpha\neq-1}$ 
and
$\left\langle   \Xi \nabla_3+\alpha\nabla_4+ \nabla_5 +\beta\nabla_6 \right\rangle_{\alpha\neq-1},$ 
\end{center}
where $\Xi= \beta-\alpha\pm \sqrt{-2\alpha\beta-2\alpha-2\beta-1},$
depending on 
\begin{center}
$\alpha_2 (\alpha_4+\alpha_5)+\alpha_1 (\alpha_4+2 \alpha_5) \neq \alpha_1\sqrt{-2 \alpha_4 (\alpha_5+\alpha_6)-\alpha_5 (\alpha_5+2 \alpha_6)})$ or not.

\end{center}

        \end{enumerate}

    \end{enumerate}

Summarizing all cases, we have the following distinct orbits 
\begin{center}
$\langle\nabla_4\rangle,$  
$\langle\nabla_3+\alpha\nabla_4\rangle^{O(\alpha)\simeq O(\alpha^{-1})},$ 
$\langle\nabla_1+\nabla_3-\nabla_4\rangle,$ 

$\langle \nabla_1+(2+\alpha) \nabla_3 -\nabla_4+\nabla_5 +\alpha  \nabla_6\rangle^{O(\alpha)\simeq O((2+\alpha)^{-1})},$ 

${\langle \nabla_2-2 \nabla_3 -\nabla_4+\nabla_5 -2  \nabla_6\rangle,}$ 

$\langle \alpha \nabla_3+ \beta\nabla_4+\nabla_5 +\gamma \nabla_6\rangle^{O(\alpha,\beta,\gamma)\simeq 
O(\frac{2+\beta}{2+\gamma},\frac{\alpha-2(2+\gamma)}{2+\gamma},-\frac{3+2\gamma}{2+\gamma})},$ 

$\langle \nabla_2+(\beta-\alpha+ \sqrt{-2\alpha\beta-2\alpha-2\beta-1})\nabla_3+ \alpha\nabla_4+\nabla_5 +\beta\nabla_6\rangle_{\alpha\neq-1},$ 

$\langle \nabla_2+(\beta-\alpha- \sqrt{-2\alpha\beta-2\alpha-2\beta-1})\nabla_3+ \alpha\nabla_4+\nabla_5 +\beta\nabla_6\rangle_{\alpha\notin \{-1,-\frac{1 + 2 \beta}{2 + 2 \beta} \}},$ 
 
\end{center}
which gives the following new algebras (see section \ref{secteoA}):

\begin{center}
${\rm N}_{93},$
${\rm N}_{94}^{\alpha},$
${\rm N}_{95},$
${\rm N}_{96}^{\alpha},$
${\rm N}_{97},$
${\rm N}_{98}^{\alpha, \beta, \gamma},$
${\rm N}_{99}^{\alpha, \beta},$
${\rm N}_{100}^{\alpha, \beta}.$
\end{center}

\subsubsection{Central extensions of ${\mathfrak N}_{14}^0$}
	Let us use the following notations:
	\begin{longtable}{lllllll} 
	$\nabla_1 = [\Delta_{11}],$ & $\nabla_2 = [\Delta_{21}],$ &
	$\nabla_3 = [\Delta_{23}],$ &
	$\nabla_4 = [\Delta_{13}+\Delta_{24}],$ \\
	$\nabla_5 = [\Delta_{32}],$ &
	$\nabla_6 = [\Delta_{42}-\Delta_{24}],$ & 
	$\nabla_7 = [\Delta_{14}].$
	\end{longtable}	
	
Take $\theta=\sum\limits_{i=1}^{7}\alpha_i\nabla_i\in {\rm H^2}({\mathfrak N}_{14}^0).$
	The automorphism group of ${\mathfrak N}_{14}^0$ consists of invertible matrices of the form
	$$\phi=
	\begin{pmatrix}
	x &  z  & 0 & 0\\
	0 &  y  & 0 & 0\\
	w &  u  & y^2 & 0\\
    t &  v  & yz & xy
	\end{pmatrix}.
	$$
	Since
	$$
	\phi^T\begin{pmatrix}
	\alpha_1 & 0 & \alpha_4 & \alpha_7\\
	\alpha_2 & 0 & \alpha_3 & \alpha_4-\alpha_6\\
	0 &  \alpha_5  & 0 & 0\\
	0 &  \alpha_6  & 0 & 0
	\end{pmatrix} \phi=\begin{pmatrix}
    \alpha_1^* & \alpha^* & \alpha_4^* & \alpha_7^*\\
	\alpha_2^* & \alpha^{**} & \alpha_3^* & \alpha_4^*-\alpha_6^*\\
	0 &  \alpha_5^*  & 0 & 0\\
	0 &  \alpha_6^*  & 0 & 0
	\end{pmatrix},
	$$
	 we have that the action of ${\rm Aut} ({\mathfrak N}_{14}^0)$ on the subspace
$\langle \sum\limits_{i=1}^{7}\alpha_i\nabla_i  \rangle$
is given by
$\langle \sum\limits_{i=1}^{7}\alpha_i^{*}\nabla_i\rangle,$
where
\begin{longtable}{lcl}
$\alpha^*_1$&$=$&$x(x \alpha _1+w \alpha _4+t \alpha_7),$ \\
$\alpha^*_2$&$=$&$x z \alpha _1+x y \alpha _2+w y \alpha _3+(ty+wz)\alpha _4-t y\alpha_6+t z \alpha_7,$\\
$\alpha^*_3$&$=$&$y(y^2 \alpha _3+z(2 y \alpha _4-y \alpha _6+z \alpha _7)),$ \\
$\alpha_4^*$&$=$&$x y \left(y \alpha _4+z \alpha _7\right),$\\
$\alpha_5^*$&$=$&$y^2 (y \alpha _5+z \alpha _6),$\\
$\alpha_6^*$&$=$&$x y^2 \alpha _6,$\\
$\alpha_7^*$&$=$&$x^2 y \alpha _7.$\\
\end{longtable}

We are interested only in the cases with
 \begin{center}
$(\alpha_3,\alpha_4,\alpha_5)\neq (0,0,0),$ $(\alpha_4,\alpha_6,\alpha_7)\neq (0,0,0).$
 \end{center}
\begin{enumerate}
    \item $\alpha_7\neq0,$ then choosing $z=-\frac{y \alpha _4}{\alpha _7},$ $t=-\frac{x \alpha _1+w \alpha _4}{\alpha _7},$
    we have $\alpha_1^*=\alpha_4^*=0.$ Thus, we can suppose $\alpha_1=\alpha_4=0$ and consider following subcases:
\begin{enumerate}
    \item $\alpha_3\neq0,$ then choosing $x=1, \ y = \sqrt{{\alpha _7}{\alpha _3^{-1}}}, \ w=-{\alpha_2}{\alpha_3^{-1}},$
        we have the family of representatives  $\left\langle \nabla_3+\beta\nabla_4+\gamma\nabla_6+\nabla_7\right\rangle;$

        \item $\alpha_3=0,$ then $\alpha_5\neq0.$
            \begin{enumerate}
            \item $\alpha_2=0,$ then choosing $x=1, \ y = \sqrt{\alpha _7\alpha _5^{-1}},$ we have the family of representatives  $\left\langle\nabla_5+\beta\nabla_6+\nabla_7\right\rangle;$
            
            \item $\alpha_2\neq0,$ then choosing $x=\frac{\alpha_2}{\alpha_7}, \ y=\frac{\alpha _2}{\sqrt{\alpha _5\alpha _7}},$ we have the family of representatives $\left\langle\nabla_2+\nabla_5+\beta\nabla_6+\nabla_7\right\rangle.$
            \end{enumerate}
    \end{enumerate}
\item $\alpha_7 = 0,$ $\alpha_6 \neq 0,$  then choosing $z=-\frac{y \alpha _5}{\alpha _6},$ we have $\alpha_5^*=0.$ Thus, we can suppose $\alpha_5=0$ and consider following subcases:
            \begin{enumerate}
            \item $\alpha_4 = 0,$ then $\alpha_3 \neq 0$ and choosing $t= \frac{x \alpha _2}{\alpha _6}, \ w=0,$ we can suppose $\alpha_2=0.$  Consider following subcases:
                \begin{enumerate}
                \item  $\alpha_1 = 0,$ then choosing $x=\frac{\alpha_3}{\alpha_6}, \ y=1,$ we have the representative $\left\langle\nabla_3+\nabla_6\right\rangle;$
                \item  $\alpha_1 \neq 0,$ then choosing $x=\frac{\alpha _1 \alpha _3^2}{\alpha _6^3}, \ y=\frac{\alpha _1 \alpha _3}{\alpha _6^2},$ we have the representative $\left\langle\nabla_1+\nabla_3+\nabla_6\right\rangle.$
                
                \end{enumerate}
            \item $\alpha_4 = \alpha_6,$ then  choosing $w= -\frac{x \alpha _1}{\alpha _6},$ we can suppose $\alpha_1=0$ and consider following subcases:
                \begin{enumerate}
                \item  $\alpha_2 = 0,$ $\alpha_3 = 0,$ then we have the representative $\left\langle\nabla_4+\nabla_6\right\rangle;$
                \item  $\alpha_2 = 0,$ $\alpha_3 \neq 0,$ then choosing $x=\frac{\alpha _3}{\alpha _6}, \ y=1,$ we have the representative $\left\langle\nabla_3+\nabla_4+\nabla_6\right\rangle;$
                \item  $\alpha_2 \neq 0,$ $\alpha_3 = 0,$ then choosing $x=1, \ y=\frac{\alpha _2}{\alpha _6},$ we have the representative $\left\langle\nabla_2+\nabla_4+\nabla_6\right\rangle;$
                 \item  $\alpha_2 \neq 0,$ $\alpha_3 \neq 0,$ then choosing $x=\frac{\alpha _2 \alpha _3}{\alpha _6^2}, \ y=\frac{\alpha _2}{\alpha _6},$ we have the representative $\left\langle\nabla_2+\nabla_3+\nabla_4+\nabla_6\right\rangle.$
                \end{enumerate}
            \item $\alpha_4 \neq \alpha_6,$ $\alpha_4 \neq 0,$ then  choosing $t= \frac{x (\alpha _1 \alpha _3- \alpha _2 \alpha _4)}{\alpha _4(\alpha _4  -\alpha _6)}, \ w= -\frac{x \alpha _1}{\alpha _4},$ we can suppose $\alpha_1=\alpha_2=0$ and consider following subcases:
            \begin{enumerate}
                \item  $\alpha_3 = 0,$ then we have the family of representatives $\left\langle \beta \nabla_4+\nabla_6\right\rangle_{\beta\neq 0;1};$
                \item  $\alpha_3 \neq 0,$ then choosing $x=\frac{\alpha _3}{\alpha _6}, \ y=1,$ we have the family of representatives $\left\langle \nabla_3+\beta \nabla_4+\nabla_6\right\rangle_{\beta\neq 0;1};$
            \end{enumerate}
\end{enumerate}
\item $\alpha_7 = 0,$ $\alpha_6 = 0,$  then $\alpha_4 \neq 0$ and choosing $z=-\frac{y \alpha _3}{2 \alpha _4},\ t= \frac{x \left(\alpha _1 \alpha _3-\alpha _2 \alpha _4\right)}{\alpha _4^2}, \ w=-\frac{x \alpha _1}{\alpha _4},$ we have $\alpha_1^*=\alpha_2^*=\alpha_3^*=0$ and consider following subcases:
 \begin{enumerate}
                \item  $\alpha_5 = 0,$ then we have the representative $\left\langle \nabla_4\right\rangle;$
                \item  $\alpha_5 \neq 0,$ then choosing $x=\frac{\alpha _5}{\alpha _4}, \ y=1,$ we have the representative $\left\langle \nabla_4+\nabla_5\right\rangle.$
            \end{enumerate}
\end{enumerate}

Summarizing all cases, we have the following distinct orbits 

\begin{center} 
$\left\langle \nabla_3+\beta\nabla_4+\gamma\nabla_6+\nabla_7\right\rangle^{O(\beta, \gamma) \simeq O(-\beta, -\gamma)},$ 
$\left\langle\nabla_2+\nabla_5+\beta\nabla_6+\nabla_7\right\rangle^{O(\beta) \simeq O(-\beta)},$
$\left\langle\nabla_5+\beta\nabla_6+\nabla_7\right\rangle^{O(\beta) \simeq O(-\beta)},$
$\left\langle\nabla_1+\nabla_3+\nabla_6\right\rangle,$ 
$\left\langle\nabla_2+\nabla_3+\nabla_4+\nabla_6\right\rangle,$  $\left\langle\nabla_2+\nabla_4+\nabla_6\right\rangle,$
$\left\langle \nabla_3+\beta \nabla_4+\nabla_6\right\rangle,$   
$\left\langle \beta \nabla_4+\nabla_6\right\rangle_{\beta\neq 0},$
$\left\langle \nabla_4+\nabla_5\right\rangle,$  $\left\langle \nabla_4\right\rangle.$

\end{center}

\subsubsection{Central extensions of ${\mathfrak N}_{14}^1$}
	Let us use the following notations:
	\begin{longtable}{lllllll} 
	$\nabla_1 = [\Delta_{11}],$ & $\nabla_2 = [\Delta_{21}],$ &
	$\nabla_3 = [\Delta_{23}],$ \\
	$\nabla_4 = [\Delta_{13}+\Delta_{24}],$&
	$\nabla_5 = [\Delta_{32}],$ &
	$\nabla_6 = [\Delta_{31}+\Delta_{42}].$ 
	\end{longtable}	
	
Take $\theta=\sum\limits_{i=1}^{6}\alpha_i\nabla_i\in {\rm H^2}({\mathfrak N}_{14}^1).$
	The automorphism group of ${\mathfrak N}_{14}^1$ consists of invertible matrices of the form
	$$\phi=
	\begin{pmatrix}
	x &  z  & 0 & 0\\
	0 &  y  & 0 & 0\\
	w &  u  & y^2 & 0\\
    t &  v  & 2yz & xy
	\end{pmatrix}.
	$$
	Since
	$$
	\phi^T\begin{pmatrix}
	\alpha_1 & 0 & \alpha_4 & 0\\
	\alpha_2 & 0 & \alpha_3 & \alpha_4\\
	\alpha_6 &  \alpha_5  & 0 & 0\\
	0 &  \alpha_4+\alpha_6  & 0 & 0
	\end{pmatrix} \phi=\begin{pmatrix}
    \alpha_1^* & \alpha^* & \alpha_4^* & 0\\
	\alpha_2^*+\alpha^* & \alpha^{**} & \alpha_3^* & \alpha_4^*\\
	\alpha_6^* &  \alpha_5^*  & 0 & 0\\
	0 &  \alpha_6^*  & 0 & 0
	\end{pmatrix},
	$$
	 we have that the action of ${\rm Aut} ({\mathfrak N}_{14}^1)$ on the subspace
$\langle \sum\limits_{i=1}^{6}\alpha_i\nabla_i  \rangle$
is given by
$\langle \sum\limits_{i=1}^{6}\alpha_i^{*}\nabla_i\rangle,$
where
\begin{longtable}{lcl}
$\alpha^*_1$&$=$&$x \left(x \alpha _1+w \left(\alpha _4+\alpha _6\right)\right),$ \\
$\alpha_2^*$&$=$&$xy\alpha_2+wy\alpha_3-(ux-ty-wz)\alpha_4-wy\alpha_5 +(ux-ty-wz)\alpha_6,$\\
$\alpha^*_3$&$=$&$y^2(y \alpha _3+3 z \alpha _4),$ \\
$\alpha^*_4$&$=$&$x y^2 \alpha _4,$\\
$\alpha_5^*$&$=$&$y^2 \left(y \alpha _5+3 z \alpha _6\right),$\\
$\alpha_6^*$&$=$&$x y^2 \alpha _6.$\\
\end{longtable}

We are interested only in the cases with 
 \begin{center}
$(\alpha_4,\alpha_6)\neq (0,0),$ $(\alpha_2, \alpha_3-\alpha_5,\alpha_4-\alpha_6)\neq (0,0,0).$ 
 \end{center} 
\begin{enumerate}
    \item $\alpha_6\neq0,$ then choosing $z=-\frac{y \alpha _5}{3 \alpha _6},$ we have $\alpha_5^*=0.$ Thus, we can suppose $\alpha_5=0$ and  consider following subcases:
    \begin{enumerate}
        
        \item $\alpha_4\neq\alpha_6,$ then choosing $t=-\frac{x \alpha _2+w \alpha _3}{\alpha _4-\alpha _6},$ we  can suppose $\alpha_2=0$ and  consider following subcases:
        \begin{enumerate}
            \item $\alpha_4= -\alpha_6,$ $\alpha_1=0,$ $\alpha_3=0,$ then  we have the representative $\left\langle -\nabla_4+\nabla_6\right\rangle;$
            \item $\alpha_4= -\alpha_6,$ $\alpha_1=0,$ $\alpha_3\neq 0,$ then choosing $x=1, \ y=\frac{\alpha _6}{\alpha _3},$ we have the representative $\left\langle \nabla_3 -  \nabla_4+\nabla_6\right\rangle;$
            \item $\alpha_4= -\alpha_6,$ $\alpha_1\neq 0,$ $\alpha_3= 0,$ then choosing $x=\frac{\alpha _6}{\alpha _1},\ y=1,$ we have the representative $\left\langle \nabla_1 -  \nabla_4+\nabla_6\right\rangle;$
            \item $\alpha_4= -\alpha_6,$ $\alpha_1\neq 0,$ $\alpha_3\neq 0,$ then choosing $x=\frac{\alpha _1 \alpha _3^2}{\alpha _6^3},\ y=\frac{\alpha _1 \alpha _3}{\alpha _6^2},$ we have the representative $\left\langle \nabla_1 +\nabla_3 -  \nabla_4+\nabla_6\right\rangle;$
            \item $\alpha_4\neq -\alpha_6,$ $\alpha_3= 0,$ then choosing $x=1,\ w=-\frac{\alpha _1}{\alpha_4+\alpha_6},$ we have the family of representatives $\left\langle \beta  \nabla_4+\nabla_6\right\rangle_{\beta\notin \{-1,1\}};$
            \item $\alpha_4\neq -\alpha_6,$ $\alpha_3\neq 0,$ then choosing $x=1,\ y=\frac{\alpha _6}{\alpha _3}, \ w=-\frac{\alpha _1}{\alpha_4+\alpha_6},$ we have the family of representatives $\left\langle \nabla_3+\beta  \nabla_4+\nabla_6\right\rangle_{\beta\notin \{-1,1 \}}.$
            \end{enumerate}
        \item $\alpha_4=\alpha_6,$ then $\alpha_3\neq 0$ and choosing 
        $w=-\frac{x \alpha _2}{\alpha _3},$ we  can suppose $\alpha_2=0$ and  consider following subcases:
            \begin{enumerate}
            \item $\alpha_1=0,$ then  choosing $x=1, \ y=\frac{\alpha _6}{\alpha _3},$ we have the representative $\left\langle \nabla_3+\nabla_4+\nabla_6\right\rangle;$
            \item $\alpha_1\neq 0,$ then  choosing $x=\frac{\alpha _1 \alpha _3^2}{\alpha _6^3}, \ y=\frac{\alpha _1 \alpha _3}{\alpha _6^2},$ we have the representative $\left\langle \nabla_1+\nabla_3+\nabla_4+\nabla_6\right\rangle.$
            \end{enumerate}
    \end{enumerate}
    \item $\alpha_6=0,$ then $\alpha_4\neq0,$ and choosing $z=-\frac{y \alpha _3}{3 \alpha _4},$ $t=\frac{x \left(3 \alpha _4 \left(u \alpha _4-y \alpha _2\right)+y \alpha _1 \left(2 \alpha _3-3 \alpha _5\right)\right)}{3 y \alpha _4^2},$ $w=-\frac{x \alpha _1}{\alpha _4},$ we have $\alpha_1^*=\alpha_2^*=\alpha_3^*=0$ and  consider following subcases:
        \begin{enumerate}
        \item $\alpha_5=0,$ then we have the representative  $\left\langle\nabla_4\right\rangle;$
        \item  $\alpha_5\neq 0,$ then choosing $x=1, \ y = \frac{\alpha_4}{\alpha_5},$  have the representative  $\left\langle\nabla_4+\nabla_5\right\rangle.$
        \end{enumerate}
\end{enumerate}
Summarizing all cases, we have the following distinct orbits 

\begin{center} 
$\langle\nabla_1+\nabla_3+\nabla_4+\nabla_6\rangle,$  
$\langle\nabla_1+\nabla_3-\nabla_4+\nabla_6\rangle,$ 
$\langle\nabla_1-\nabla_4+\nabla_6\rangle,$
$\langle\nabla_3+\beta\nabla_4+\nabla_6\rangle,$ $\langle\beta\nabla_4+\nabla_6\rangle_{\beta\neq1},$ 
$\langle\nabla_4+\nabla_5\rangle,$ $\langle\nabla_4\rangle.$ 
\end{center}

\subsubsection{Central extensions of ${\mathfrak N}_{14}^{\alpha\notin \{0,1\}}$}
	Let us use the following notations:
	\begin{longtable}{lllllll} 
	$\nabla_1 = [\Delta_{11}],$ & $\nabla_2 = [\Delta_{21}],$ & 
	$\nabla_3 = [\Delta_{23}],$ \\
	$\nabla_4 = [\Delta_{13}+\Delta_{24}],$ & 
	$\nabla_5 = [\Delta_{32}],$ &
$\nabla_6=[(\alpha-1)\Delta_{24}+\alpha\Delta_{31}+\Delta_{42}].$
	\end{longtable}	
	
Take $\theta=\sum\limits_{i=1}^{6}\alpha_i\nabla_i\in {\rm H^2}({\mathfrak N}_{14}^{\alpha\notin \{0,1\}}).$
	The automorphism group of ${\mathfrak N}_{14}^{\alpha\notin \{0,1\}}$ consists of invertible matrices of the form
	$$\phi=
	\begin{pmatrix}
	x &  z  & 0 & 0\\
	0 &  y  & 0 & 0\\
	w &  u  & y^2 & 0\\
    t &  v  & (1+\alpha)yz & xy
	\end{pmatrix}.
	$$
	Since
	$$
	\phi^T\begin{pmatrix}
	\alpha_1 & 0 & \alpha_4 & 0\\
	\alpha_2 & 0 & \alpha_3 & \alpha_4+(\alpha-1)\alpha_6\\
	\alpha\alpha_6 &  \alpha_5  & 0 & 0\\
	0 &  \alpha_6  & 0 & 0
	\end{pmatrix} \phi=\begin{pmatrix}
    \alpha_1^* & \alpha^* & \alpha_4^* & 0\\
	\alpha_2^*+\alpha\alpha^* & \alpha^{**} & \alpha_3^* & \alpha_4^*+(\alpha-1)\alpha_6^*\\
	\alpha\alpha_6^* &  \alpha_5^*  & 0 & 0\\
	0 &  \alpha_6^*  & 0 & 0
	\end{pmatrix},
	$$
	 we have that the action of ${\rm Aut} ({\mathfrak N}_{14}^{\alpha\notin \{0,1\}})$ on the subspace
$\langle \sum\limits_{i=1}^{6}\alpha_i\nabla_i  \rangle$
is given by
$\langle \sum\limits_{i=1}^{6}\alpha_i^{*}\nabla_i\rangle,$
where
\begin{longtable}{lcl}
$\alpha^*_1$&$=$&$x(x\alpha _1+w(\alpha _4+\alpha \alpha _6)),$ \\
$\alpha^*_2$&$=$&$x z (1-\alpha ) \alpha _1+x y \alpha _2+w y \alpha _3+(ty+wz-ux\alpha)\alpha_4-wy\alpha\alpha_5-(ty-ux\alpha+wz\alpha^2)\alpha _6,$\\
$\alpha^*_3$&$=$&$y^2(y\alpha_3+z((2+\alpha)\alpha_4-(1-\alpha^2) \alpha _6)),$ \\
$\alpha_4^*$&$=$&$x y^2 \alpha _4,$\\
$\alpha_5^*$&$=$&$y^2(y\alpha _5+z(1+2\alpha)\alpha_6),$\\
$\alpha_6^*$&$=$&$x y^2 \alpha _6.$\\
\end{longtable}

We are interested only in the cases with 
$(\alpha_4,\alpha_6)\neq (0,0).$
\begin{enumerate}
    
    \item $\alpha_6\neq0,$  then consider  following subcases:
    \begin{enumerate}
        \item $\alpha_4\neq-\alpha\alpha_6,\ \alpha_4\neq \alpha_6,$ then by choosing $u=0,$
        $w=-\frac{x\alpha_1}{\alpha_4+\alpha\alpha_6}$ and 
        $t=\frac{x(\alpha_1 (y\alpha_3+\alpha(z\alpha_4-y\alpha_5-z\alpha_6))-y\alpha_2(\alpha _4+\alpha \alpha _6))} {y(\alpha _4-\alpha _6)(\alpha _4+\alpha\alpha_6)},$  we have $\alpha_1^*=\alpha_2^*=0.$
        \begin{enumerate}
            \item $\alpha =-\frac{1}{2},\ \alpha_5=0,$ then choosing $z=-\frac{4y\alpha_3}{3(2\alpha_4-\alpha_6)},$ we have the family of  representatives $\left\langle\beta\nabla_4+\nabla_6\right\rangle_{ \beta\notin \{ 1,-\alpha\},\ \alpha=-\frac{1}{2}};$
        
            \item $\alpha =-\frac{1}{2},\ \alpha_5\neq0,$ then choosing $x=\frac{\alpha_5}{\alpha_6},$ $ y=1,$ $  z=-\frac{4\alpha_3}{3(2\alpha_4-\alpha_6)}$ we have the family of  representatives $\left\langle\beta\nabla_4+\nabla_5+\nabla_6\right\rangle_{\beta\notin \{1,-\alpha\},\ \alpha=-\frac{1}{2}};$
            \item $\alpha \neq-\frac{1}{2},$ then choosing $z=-\frac{y\alpha_5}{(1+2\alpha)\alpha_6}$ we have $\alpha_5^*=0.$ 
            \begin{enumerate}
                \item $\alpha_3=0,$ then we have the family of  representatives $\left\langle\beta\nabla_4+\nabla_6\right\rangle_{\beta\notin \{1,-\alpha\},\ \alpha\neq-\frac{1}{2}};$
                \item $\alpha_3\neq0,$ then choosing $x=\frac{\alpha_3}{\alpha_6},\  y=1,$ we have the family of  representatives $\left\langle\nabla_3+\beta\nabla_4+\nabla_6\right\rangle_{\beta\notin \{1,-\alpha\},\ \alpha\neq-\frac{1}{2}}.$
            \end{enumerate}
        \end{enumerate}
        \item $\alpha_4\neq-\alpha\alpha_6,\ \alpha_4=\alpha_6,$ then choosing $w=-\frac{x \alpha _1}{(1+\alpha ) \alpha _6},$ we have $\alpha_1^*=0.$
        \begin{enumerate}
            \item $\alpha =-\frac{1}{2},\ \alpha_5=0,\ \alpha_2=0,$ then choosing $y=1, \ z=-\frac{4\alpha_3}{3\alpha_6},$ we have the representative $\left\langle\nabla_4+\nabla_6\right\rangle_{\alpha=-\frac{1}{2}};$
            \item $\alpha =-\frac{1}{2},\ \alpha_5=0,\ \alpha_2\neq0,$ then choosing $x=1,\ y=\frac{\alpha_2}{\alpha_6},\ z=-\frac{4\alpha_2\alpha_3}{3\alpha_6^2},$ we have the representative $\left\langle\nabla_2+\nabla_4+ \nabla_6\right\rangle_{\alpha=-\frac{1}{2}};$
            \item $\alpha =-\frac{1}{2},\ \alpha_5\neq0,\ \alpha_2=0,$ then choosing $x=\frac{\alpha_5}{\alpha_6},\  y=1,\  z=-\frac{4\alpha_3}{3\alpha_6}$ we have the representative $\left\langle\nabla_4+\nabla_5+\nabla_6 \right\rangle_{\alpha=-\frac{1}{2}};$
            \item $\alpha =-\frac{1}{2},\ \alpha_5\neq0,\ \alpha_2\neq0,$ then choosing $x=\frac{\alpha_2\alpha_5}{\alpha_6^2},\  y=\frac{\alpha_2}{\alpha_6},\  z=-\frac{4\alpha_2\alpha_3}{3\alpha_6^2}$ we have the representative $\left\langle\nabla_2+\nabla_4+\nabla_5+\nabla_6 \right \rangle_{\alpha=-\frac{1}{2}};$
            \item $\alpha \neq-\frac{1}{2},$ then choosing $z=-\frac{y\alpha_5}{(1+2\alpha)\alpha_6}$ we have $\alpha_5^*=0.$ 
            \begin{enumerate}
                \item $\alpha_3=\alpha_2=0,$ then we have the representative $\left\langle\nabla_4+\nabla_6\right\rangle_{\alpha\notin \{-1,-\frac{1}{2}\}};$
                \item $\alpha_3=0,\ \alpha_2\neq0,$ then choosing $y=\frac{\alpha_2}{\alpha_6},$ we have the representative $\left\langle\nabla_2+\nabla_4+\nabla_6\right\rangle_{\alpha\notin \{-1,-\frac{1}{2}\}};$
                \item $\alpha_3\neq0,\ \alpha_2=0,$ then choosing $x=\frac{\alpha_3}{\alpha_6},\ y=1,$ we have the representative $\left\langle\nabla_3+\nabla_4+\nabla_6\right\rangle_{\alpha\notin \{-1,-\frac{1}{2}\}};$
                \item $\alpha_3\neq0,\ \alpha_2\neq0,$ then choosing $x=\frac{\alpha_2\alpha_3}{\alpha_6^2},\ y=\frac{\alpha_2}{\alpha_6},$ we have the representative $\left\langle\nabla_2+\nabla_3+\nabla_4+ \nabla_6\right\rangle_{\alpha\notin \{-1,-\frac{1}{2}\}}.$
            \end{enumerate}
            \end{enumerate}
           \item $\alpha_4=-\alpha\alpha_6,$ $\alpha\neq-1,$ then choosing $u=0,\ w=0,\ t=\frac{x \left(z(1-\alpha ) \alpha _1+y \alpha _2\right)}{y (1+\alpha ) \alpha _6},$ we have $\alpha_2^*=0.$  Hence, 
            \begin{enumerate}
            \item $\alpha =-\frac{1}{2},\ \alpha_5=\alpha_1=\alpha_3=0,$ then we have the representative $\left\langle\frac{1}{2}\nabla_4+\nabla_6\right\rangle_{\alpha=-\frac{1}{2}};$ 
            \item $\alpha =-\frac{1}{2},\ \alpha_5=\alpha_1=0,\ \alpha_3\neq0,$ then choosing $x=\frac{\alpha_3}{\alpha_6},\ y=1,$ we have the representative $\left\langle\nabla_3+\frac{1}{2}\nabla_4+\nabla_6\right\rangle_{\alpha=-\frac{1}{2}};$
            \item $\alpha =-\frac{1}{2},\ \alpha_5=0,\ \alpha_1\neq0,\ \alpha_3=0$ then choosing $x=\frac{\alpha_6}{\alpha_1},\ y=1,$ we have the representative $\left\langle\nabla_1+\frac{1}{2}\nabla_4+\nabla_6\right\rangle_{\alpha=-\frac{1}{2}};$
            \item $\alpha =-\frac{1}{2},\ \alpha_5=0, \alpha_1\neq0, \ \alpha_3\neq0,$ then choosing $x=\frac{\alpha_1\alpha_3^2}{\alpha_6^3},\ y=\frac{\alpha_1\alpha_3}{\alpha_6^2},$ we have the representative $\left\langle\nabla_1+\nabla_3+\frac{1}{2}\nabla_4+\nabla_6\right\rangle_{\alpha=-\frac{1}{2}};$
            \item $\alpha =-\frac{1}{2},\ \alpha_5\neq0,\ \alpha_1=0,$ then choosing $x=\frac{\alpha_5}{\alpha_6},\ y=1,$ we have the family of  representatives $\left\langle\beta\nabla_3+\frac{1}{2}\nabla_4+ \nabla_5+\nabla_6\right\rangle_{\alpha=-\frac{1}{2}};$
            \item $\alpha =-\frac{1}{2},\ \alpha_5\neq0,\ \alpha_1\neq0,$ then choosing $x=\frac{\alpha_1\alpha_5^2}{\alpha_6^3},\ y=\frac{\alpha_1\alpha_5}{\alpha_6^2},$ we have the family of representatives $\left\langle\nabla_1+\beta\nabla_3+\frac{1}{2}\nabla_4+\nabla_5+\nabla_6\right \rangle_{\alpha=-\frac{1}{2}};$
            \item $\alpha \neq-\frac{1}{2},$ then choosing $z=-\frac{y\alpha_5} {(1+2\alpha)\alpha_6},$ we have $\alpha_5^*=0.$ 
           
           \begin{enumerate}
                \item $\alpha_1=0,\ \alpha_3=0,$ then we have the family of representatives $\left\langle-\alpha\nabla_4+\nabla_6\right\rangle_{\alpha \notin\{-1,-\frac 1 2\}};$
           
                \item $\alpha_1=0,\  \alpha_3\neq0,$ then choosing $x=\frac{\alpha_3}{\alpha_6},\ y=1,$ we have the family of representatives $\left\langle\nabla_3-\alpha\nabla_4+\nabla_6\right\rangle_{\alpha \notin\{-1,-\frac 1 2\}};$
                \item $\alpha_1\neq0,\  \alpha_3=0,$ then choosing $x=\frac{\alpha_6}{\alpha_1},\ y=1,$ we have the family of representatives $\left\langle\nabla_1-\alpha\nabla_4+\nabla_6\right\rangle_{\alpha \notin\{-1,-\frac 1 2\}};$
                \item $\alpha_1\neq0,\  \alpha_3\neq0,$ then choosing $x=\frac{\alpha_1\alpha_3^2}{\alpha_6^3},\ y=\frac{\alpha_1\alpha_3}{\alpha_6^2},$ we have the representative $\left\langle\nabla_1+\nabla_3- \alpha\nabla_4+\nabla_6\right\rangle_{\alpha \neq-1,-\frac 1 2}.$
            \end{enumerate}
            
        \end{enumerate}
        \item $\alpha _4=-\alpha\alpha_6,\ \alpha=-1,$ then choosing $z=\frac{y \alpha _5}{\alpha _6},$ we have $\alpha_5^*=0.$
        \begin{enumerate}
            \item $\alpha_3=\alpha_1=\alpha_2=0,$ then we have the representative $\left\langle\nabla_4+\nabla_6\right \rangle_{\alpha =-1};$
            \item $\alpha_3=\alpha_1=0,\ \alpha_2\neq0,$ then choosing $y=\frac{\alpha_2}{\alpha_6},$ we have the representative $\left\langle\nabla_2+\nabla_4 +\nabla_6\right\rangle_{\alpha=-1};$
            \item $\alpha_3=0,\ \alpha_1\neq0,\ \alpha_2=0,$ then choosing $x=\frac{\alpha_6}{\alpha_1},\ y=1,$ we have the representative $\left\langle\nabla_1+\nabla_4 +\nabla_6\right\rangle_{\alpha=-1};$
            \item $\alpha_3=0,\ \alpha_1\neq0,\ \alpha_2\neq0,$ then choosing $x=\frac{\alpha_2^2}{\alpha_1\alpha_6},\ y=\frac{\alpha_2}{\alpha_6},$ we have the representative $\left\langle\nabla_1+\nabla_2+\nabla_4 +\nabla_6\right\rangle_{\alpha=-1};$
            \item $\alpha_3\neq0,\ \alpha_1=0,$ then choosing $x=1,\ y=\frac{\alpha_6} {\alpha_3}, \ w=-\frac{\alpha_2}{\alpha_3},$ we have the representative $\left\langle\nabla_3+\nabla_4 +\nabla_6\right\rangle_{\alpha=-1};$
            \item $\alpha_3\neq0,\ \alpha_1\neq0,$ then choosing $x=\frac{\alpha_1\alpha_3^2}{\alpha_6^3},\ y=\frac{\alpha_1\alpha_3}{\alpha_6^2},\  w=-\frac{\alpha_1\alpha_2\alpha_3}{\alpha_6^3},$ we have the representative $\left\langle\nabla_1+\nabla_3+ \nabla_4+\nabla_6\right\rangle_{\alpha=-1}.$
        \end{enumerate} 
        
    \end{enumerate}
    \item $\alpha_6=0,$ then $\alpha_4\neq0$ and choosing $w=-\frac{x \alpha _1}{\alpha _4},$ 
    $ t=\frac{x (\alpha _4(u \alpha \alpha_4-y \alpha _2)+\alpha_1(y \alpha _3+z \alpha  \alpha_4-y \alpha \alpha _5))}{y \alpha _4^2},$ we have $\alpha_1^*=\alpha_2^*=0.$
    Thus, we can suppose $\alpha_1=\alpha_2=0$ and consider following subcases:
    \begin{enumerate}
        \item $\alpha=-2,\ \alpha_3=\alpha_5=0,$ then we have the representative  $\left\langle\nabla_4\right\rangle;$
        \item $\alpha=-2,\ \alpha_3=0,\ \alpha_5\neq0,$ then choosing $x=\frac{\alpha_5}{\alpha_4},\ y=1,$ we have the representative  $\left\langle\nabla_4+\nabla_5\right\rangle;$
        \item $\alpha=-2,\ \alpha_3\neq0,$ then choosing $x=\frac{\alpha_3}{\alpha_4},\ y=1,$ we have the family of representatives $\left\langle\nabla_3+\nabla_4+\beta\nabla_5\right\rangle;$
        \item $\alpha\neq-2,\ \alpha_5=0,$ then choosing $y=1,\ z=-\frac{\alpha _3}{(2+\alpha ) \alpha _4},$ we have the representative $\left\langle\nabla_4\right\rangle_{\alpha\neq-2};$
        \item $\alpha\neq-2,\ \alpha_5\neq0,$ then choosing $x=\frac{\alpha_5}{\alpha_4},\ y=1,\ z=-\frac{\alpha _3}{(2+\alpha)\alpha_4},$ we have the representative $\left\langle\nabla_4+\nabla_5\right\rangle_{\alpha\neq-2}.$
    \end{enumerate}
    
\end{enumerate}

Summarizing all cases for the family of algebras ${\mathfrak N}_{14}^{\alpha},$ we have the following distinct orbits:

\begin{center} 
$\left\langle \nabla_3+\beta\nabla_4+\gamma\nabla_6+\nabla_7\right\rangle^{O(\beta, \gamma) \simeq O(-\beta, -\gamma)}_{\alpha=0},$ 
$\left\langle\nabla_2+\nabla_5+\beta\nabla_6+\nabla_7\right\rangle^{O(\beta) \simeq O(-\beta)}_{\alpha=0},$
$\left\langle\nabla_5+\beta\nabla_6+\nabla_7\right\rangle^{O(\beta) \simeq O(-\beta)}_{\alpha=0},$
$\langle\nabla_1+\nabla_3+\nabla_4+\nabla_6\rangle_{\alpha=1},$  
$\langle\nabla_1+\nabla_2+\nabla_4+\nabla_6\rangle_{\alpha= -1},$ 
$\langle\nabla_3+\nabla_4+\beta\nabla_5\rangle_{\alpha=-2},$
$\langle\nabla_1+\beta\nabla_3+\frac12\nabla_4+\nabla_5+\nabla_6\rangle_{\alpha=-\frac12},$ 
$\langle\beta\nabla_3+\frac12\nabla_4+\nabla_5+\nabla_6\rangle_{\alpha=-\frac12, \beta\neq0},$
$\langle\nabla_2+\nabla_4+\nabla_5+\nabla_6\rangle_{\alpha=-\frac12},$  $\langle\beta\nabla_4+\nabla_5+\nabla_6\rangle_{\alpha=-\frac12},$
$\langle\nabla_2+\nabla_3+\nabla_4+\nabla_6\rangle_{\alpha \neq -\frac 1 2},$ $\langle\nabla_2+\nabla_4+\nabla_6\rangle,$ 
$\langle\nabla_1+\nabla_3-\alpha\nabla_4+\nabla_6\rangle,$
$\langle\nabla_1-\alpha\nabla_4+\nabla_6\rangle_{\alpha\neq 0},$
$\langle\nabla_3+\beta\nabla_4+\nabla_6\rangle,$  $\langle\beta\nabla_4+\nabla_6\rangle,$ 
$\langle\nabla_4+\nabla_5\rangle,$  $\langle\nabla_4\rangle,$
\end{center}
which gives the following new algebras (see section \ref{secteoA}):

\begin{center}
${\rm N}_{101}^{\beta, \gamma},$
${\rm N}_{102}^{\beta},$
${\rm N}_{103}^{\beta},$
${\rm N}_{104},$
${\rm N}_{105},$
${\rm N}_{106}^{\beta},$
${\rm N}_{107}^{\beta},$
${\rm N}_{108}^{\beta},$
${\rm N}_{109},$
${\rm N}_{110}^{\beta},$
${\rm N}_{111}^{\alpha (\alpha \neq -\frac 1 2)},$

${\rm N}_{112}^{\alpha},$
${\rm N}_{113}^{\alpha},$
${\rm N}_{114}^{\alpha(\alpha \neq 0)},$
${\rm N}_{115}^{\alpha, \beta},$
${\rm N}_{116}^{\alpha, \beta},$
${\rm N}_{117}^{\alpha},$
${\rm N}_{118}^{\alpha}.$
\end{center}

\subsection{$1$-dimensional central extensions of  two-generated
$4$-dimensional $3$-step nilpotent  Novikov algebras}

\subsubsection{The description of second cohomology space}
In the following table, we give the description of the second cohomology space of  two-generated $4$-dimensional $3$-step
  nilpotent Novikov algebras.

\begin{longtable}{llllllll}

\hline

${\mathcal N}^4_{01}$ $:$ \quad $e_1 e_1 = e_2$  \quad $e_2 e_1=e_3$  \\

\multicolumn{8}{l}{
${\rm H}^2({\mathcal N}^4_{01})=
\Big\langle   [\Delta_{12}], [\Delta_{13}-\Delta_{31}], [\Delta_{14}], [\Delta_{41}],[\Delta_{44}] \Big\rangle $}\\
\hline
${\mathcal N}^4_{02}(\lambda)$ $:$\quad $e_1 e_1 = e_2$ \quad $e_1 e_2=e_3$ \quad $e_2 e_1=\lambda e_3$   \\

\multicolumn{8}{l}{
${\rm H}^2({\mathcal N}^4_{02}(\lambda\neq1))=
\Big\langle [\Delta_{14}], [\Delta_{21}], [(2-\lambda)\Delta_{13}+\lambda\Delta_{22}+\lambda\Delta_{31}], [\Delta_{41}],[\Delta_{44}] 
\Big\rangle $}\\

\multicolumn{8}{l}{
${\rm H}^2_{Comm}({\mathcal N}^4_{02}(1))=
\Big\langle [\Delta_{13}+\Delta_{22}+\Delta_{31}], [\Delta_{14}+\Delta_{41}],[\Delta_{44}] 
\Big\rangle $}\\

\multicolumn{8}{l}{
${\rm H}^2({\mathcal N}^4_{02}(1))={\rm H}^2_{Comm}({\mathcal N}^4_{02}(1))\oplus \Big\langle [\Delta_{21}], [\Delta_{41}] 
\Big\rangle $}\\

\hline

${\mathcal N}^4_{04}(\alpha)$ $:$ \quad
$e_1 e_1 = e_2$ \quad   $e_1e_2=e_4$ \quad $e_2e_1=\alpha e_4$ \quad $e_3e_3=e_4$  \\

\multicolumn{8}{l}{
${\rm H}^2({\mathcal N}^4_{04})=
\Big\langle  [\Delta_{13}], [\Delta_{21}], [\Delta_{31}], [\Delta_{33}] 
\Big\rangle $}\\
\hline

${\mathcal N}^4_{05}$ $:$\quad
$e_1 e_1 = e_2$ \quad   $e_1e_2=e_4$ \quad $e_1e_3= e_4$ \quad  $e_2e_1= e_4$ \quad $e_3e_3=e_4$ \\

\multicolumn{8}{l}{
${\rm H}^2({\mathcal N}^4_{05})=
\Big\langle [\Delta_{13}], [\Delta_{21}], [\Delta_{31}], [\Delta_{33}]  
\Big\rangle $}\\
\hline

${\mathcal N}^4_{06}(\alpha)_{\alpha\neq0}$ $:$\quad
$e_1 e_1 = e_2$ \quad $e_1e_2=e_4$ \quad $e_1e_3=e_4$ \quad $e_2e_1=\alpha e_4$  \\

\multicolumn{8}{l}{
${\rm H}^2({\mathcal N}^4_{06}(\alpha))=
\Big\langle [\Delta_{13}], [\Delta_{21}], [\Delta_{31}], [\frac{2-\alpha}{\alpha}\Delta_{14}+\Delta_{22}+\Delta_{23}-\Delta_{32}+\Delta_{41}], [\Delta_{33}] 
\Big\rangle $}\\

\hline

${\mathcal N}^4_{07}$ \quad $:$ \quad
$e_1 e_1 = e_2$ \quad    $e_2e_1= e_4$ \quad $e_3e_3=e_4$ \\

\multicolumn{8}{l}{
${\rm H}^2({\mathcal N}^4_{07})=
\Big\langle  [\Delta_{12}], [\Delta_{13}], [\Delta_{31}], [\Delta_{33}]
\Big\rangle $}\\
\hline

${\mathcal N}^4_{08}$ $:$ \quad
$e_1 e_1 = e_2$ \quad $e_1e_3=e_4$ \quad    $e_2e_1= e_4$  \\

\multicolumn{8}{l}{
${\rm H}^2({\mathcal N}^4_{08})=
\Big\langle [\Delta_{12}], [\Delta_{21}], [\Delta_{31}], [\Delta_{33}], [-\Delta_{14}+\Delta_{23}-\Delta_{32}+\Delta_{41}] 
\Big\rangle $}\\
\hline

${\mathcal N}^4_{09}$ $:$ \quad
$e_1 e_1 = e_2$ \quad $e_1e_2=e_4$ \quad    $e_3e_1= e_4$  \\

\multicolumn{8}{l}{
${\rm H}^2({\mathcal N}^4_{09})=
\Big\langle [\Delta_{13}], [\Delta_{21}], [\Delta_{31}], [\Delta_{33}], [\Delta_{14}+\Delta_{32}] 
\Big\rangle $}\\
\hline

${\mathcal N}^{4}_{10}$ $:$ \quad $e_1 e_2=e_3$ \quad $e_1 e_3=e_4$\\

\multicolumn{8}{l}{
${\rm H}^2({\mathcal N}^4_{10})=
\Big\langle [\Delta_{11}], [\Delta_{14}], [\Delta_{21}], [\Delta_{22}], [\Delta_{23}-\Delta_{32}]  
\Big\rangle $}\\
\hline

${\mathcal N}^{4}_{11}$ $:$ \quad   $e_1e_2=e_3$ \quad $e_1 e_3=e_4$  \quad $e_2 e_1=e_4$ \\

\multicolumn{8}{l}{
${\rm H}^2({\mathcal N}^4_{11})=
\Big\langle  [\Delta_{11}], [\Delta_{21}], [\Delta_{22}], [\Delta_{23}-\Delta_{32}]  
\Big\rangle $}\\
\hline

${\mathcal N}^{4}_{12}$ $:$ \quad  $e_1e_2=e_3$ \quad $e_2 e_3=e_4$  \quad $e_3 e_2=-e_4$ \\

\multicolumn{8}{l}{
${\rm H}^2({\mathcal N}^4_{12})=
\Big\langle [\Delta_{11}], [\Delta_{13}], [\Delta_{21}], [\Delta_{22}] 
\Big\rangle $}\\
\hline

${\mathcal N}^{4}_{13}$ $:$\quad  $e_1e_2=e_3$ \quad $e_1 e_1=e_4$ \quad $e_2 e_3=e_4$  \quad $e_3 e_2=-e_4$ \\

\multicolumn{8}{l}{
${\rm H}^2({\mathcal N}^4_{13})=
\Big\langle   [\Delta_{11}], [\Delta_{13}], [\Delta_{21}], [\Delta_{22}]  
\Big\rangle $}\\
\hline

${\mathcal N}^{4}_{14}$ $:$\quad  $e_1e_2=e_3$ \quad $e_1 e_3=e_4$ \quad $e_2 e_3=e_4$  \quad $e_3 e_2=-e_4$ \\

\multicolumn{8}{l}{
${\rm H}^2({\mathcal N}^4_{14})=
\Big\langle  [\Delta_{11}], [\Delta_{13}], [\Delta_{21}], [\Delta_{22}] 
\Big\rangle $}\\
\hline

${\mathcal N}^{4}_{15}$ $:$ \quad  $e_1e_2=e_3$ \quad $e_1 e_1=e_4$ \quad  $e_1 e_3=e_4$ \quad $e_2 e_3=e_4$ \quad $e_3 e_2=-e_4$\\

\multicolumn{8}{l}{
${\rm H}^2({\mathcal N}^4_{15})=
\Big\langle  [\Delta_{11}], [\Delta_{13}], [\Delta_{21}], [\Delta_{22}] 
\Big\rangle $}\\
\hline

${\mathcal N}^{4}_{16}$ $:$ \quad  $e_1e_2=e_3$ \quad  $e_1 e_3=e_4$ \quad $e_2 e_2=e_4$  \\

\multicolumn{8}{l}{
${\rm H}^2({\mathcal N}^4_{16})=
\Big\langle  [\Delta_{11}], [\Delta_{21}], [\Delta_{22}], [\Delta_{14}+\Delta_{23}], [-\Delta_{23}+\Delta_{32}] 
\Big\rangle $}\\
\hline

${\mathcal N}^{4}_{17}$  $:$ \quad  $e_1e_2=e_3$ \quad  $e_1 e_3=e_4$ \quad $e_2 e_1=e_4$ \quad $e_2 e_2=e_4$  \\

\multicolumn{8}{l}{
${\rm H}^2({\mathcal N}^4_{17})=
\Big\langle   [\Delta_{11}], [\Delta_{21}], [\Delta_{22}], [\Delta_{23}-\Delta_{32}],
\Big\rangle $}\\
\hline

${\mathcal N}^{4}_{18}$ $:$\quad  $e_1e_2=e_3$ \quad  $e_2 e_2=e_4$ \quad $e_2 e_3=e_4$  \quad $e_3 e_2=-e_4$  \\

\multicolumn{8}{l}{
${\rm H}^2({\mathcal N}^4_{18})=
\Big\langle  [\Delta_{11}], [\Delta_{13}], [\Delta_{21}], [\Delta_{22}]
\Big\rangle $}\\
\hline

${\mathcal N}^{4}_{19}$ $:$ \quad  $e_1e_2=e_3$ \quad $e_1 e_1=e_4$ \quad  $e_2 e_2=e_4$ \quad $e_2 e_3=e_4$  \quad $e_3 e_2=-e_4$  \\

\multicolumn{8}{l}{
${\rm H}^2({\mathcal N}^4_{19})=
\Big\langle [\Delta_{11}], [\Delta_{13}], [\Delta_{21}], [\Delta_{22}] 
\Big\rangle $}\\
\hline

${\mathcal N}^{4}_{20}(\alpha)$  $:$\quad  $e_1e_2=e_3$ \quad $e_1 e_1=\alpha e_4$ \quad  $e_1 e_3=e_4$ \quad $e_2 e_2=e_4$ \quad $e_2e_3=e_4$  \quad $e_3 e_2=-e_4$ \\
 
\multicolumn{8}{l}{
${\rm H}^2({\mathcal N}^4_{20})=
\Big\langle  [\Delta_{11}], [\Delta_{13}], [\Delta_{21}], [\Delta_{22}] 
\Big\rangle $}\\
\hline
\end{longtable}
\subsubsection{Central extensions of ${\mathcal N}^4_{01}$}
	Let us use the following notations:
	\begin{longtable}{lllllll} 
	$\nabla_1 = [\Delta_{12}],$ & $\nabla_2 = [\Delta_{13}-\Delta_{31}],$ &
	$\nabla_3 = [\Delta_{14}],$ &
	$\nabla_4 = [\Delta_{41}],$&
	$\nabla_5 = [\Delta_{44}].$ 
	\end{longtable}	
	
Take $\theta=\sum\limits_{i=1}^{5}\alpha_i\nabla_i\in {\rm H^2}({\mathcal N}^4_{01}).$
	The automorphism group of ${\mathcal N}^4_{01}$ consists of invertible matrices of the form
	$$\phi=
	\begin{pmatrix}
	x &  0  & 0 & 0\\
	y &  x^2  & 0 & 0\\
	z &  xy  & x^3 & t\\
    u &  0  & 0 & r
	\end{pmatrix}.
	$$
	Since
	$$
	\phi^T\begin{pmatrix}
	0 & \alpha_1 & \alpha_2 & \alpha_3\\
	0 &	0 & 0 & 0\\
	-\alpha_2 &  0  & 0 & 0\\
	\alpha_4 & 0  & 0 & \alpha_5
	\end{pmatrix} \phi=\begin{pmatrix}
	\alpha^* & \alpha_1^* & \alpha_2^* & \alpha_3^*\\
	\alpha^{**} &	0 & 0 & 0\\
	-\alpha_2^* &  0  & 0 & 0\\
	\alpha_4^* & 0  & 0 & \alpha_5^*
	\end{pmatrix},
	$$
	 we have that the action of ${\rm Aut} ({\mathcal N}^4_{01})$ on the subspace
$\langle \sum\limits_{i=1}^{5}\alpha_i\nabla_i  \rangle$
is given by
$\langle \sum\limits_{i=1}^{5}\alpha_i^{*}\nabla_i\rangle,$
where
\begin{longtable}{lcllcllcl}
$\alpha^*_1$&$=$&$x^2(x \alpha _1+y \alpha _2),$ &
$\alpha_2^*$&$=$&$x^4 \alpha _2,$ &
$\alpha^*_3$&$=$&$t x \alpha _2+r x \alpha _3+r u \alpha _5,$ \\
$\alpha^*_4$&$=$&$-t x \alpha _2+r x \alpha _4+r u \alpha _5,$ &
$\alpha_5^*$&$=$&$r^2 \alpha _5.$\\
\end{longtable}

We are interested only in the cases with 
 \begin{center}
$(\alpha_3,\alpha_4,\alpha_5)\neq (0,0,0),$  $\alpha_2\neq 0.$ 
 \end{center} 
Since $\alpha_2\neq0,$ then choosing $y=-\frac{x \alpha _1}{\alpha _2},\ t=-\frac{r \left(x \alpha _3+u \alpha _5\right)}{x \alpha _2},$ we have $\alpha_1^*=\alpha_3^*=0.$
\begin{enumerate}
    \item If $\alpha_5\neq0,$ then choosing $x=1,\ u=-\frac{\alpha _4}{2 \alpha _5},\ r=\sqrt{\frac{\alpha_2}{\alpha_5}},$ we have the representative  $\left\langle\nabla_2+\nabla_5\right\rangle.$
    \item If $\alpha_5=0,$ then  $\alpha_4\neq0$ and choosing $x=1,\ r=-\frac{\alpha_2}{\alpha_4},$ we have the representative  $\left\langle\nabla_2+\nabla_4\right\rangle.$  
\end{enumerate}

Therefore, we have the following distinct orbits 
\begin{longtable} {llll}
$\langle\nabla_2+\nabla_5\rangle,$ & $\langle\nabla_2+\nabla_4\rangle,$ \\
\end{longtable}
which gives the following new algebras (see section \ref{secteoA}):

\begin{center}
${\rm N}_{119},$
${\rm N}_{120},$
\end{center}

\subsubsection{Central extensions of ${\mathcal N}^4_{02}(\lambda\neq1)$}
	Let us use the following notations:
	\begin{longtable}{lllllll} 
	$\nabla_1 = [\Delta_{14}],$ & $\nabla_2 = [\Delta_{21}],$ &
	$\nabla_3 = [(2-\lambda)\Delta_{13}+\lambda\Delta_{22}+\lambda\Delta_{31}],$ &
	$\nabla_4 = [\Delta_{41}],$&
	$\nabla_5 = [\Delta_{44}].$ 
	\end{longtable}	
	
Take $\theta=\sum\limits_{i=1}^{5}\alpha_i\nabla_i\in {\rm H^2}({\mathcal N}^4_{02}(\lambda\neq1)).$
	The automorphism group of ${\mathcal N}^4_{02}(\lambda\neq1)$ consists of invertible matrices of the form
	$$\phi=
	\begin{pmatrix}
	x &  0  & 0 & 0\\
	y &  x^2  & 0 & 0\\
	z &  (1+\lambda)xy  & x^3 & t\\
    u &  0  & 0 & r
	\end{pmatrix}.
	$$
	Since
	$$
	\phi^T\begin{pmatrix}
	0 & 0 & (2-\lambda)\alpha_3 & \alpha_1\\
	\alpha_2 &  \lambda\alpha_3  & 0 & 0\\
	\lambda\alpha_3 &	0 & 0 & 0\\
	\alpha_4 & 0  & 0 & \alpha_5
	\end{pmatrix} \phi=\begin{pmatrix}
	\alpha^* & \alpha^{**} & (2-\lambda)\alpha_3^* & \alpha_1^*\\
	\alpha_2^{*}+\lambda \alpha^{**} &	\lambda\alpha_3^* & 0 & 0\\
	\lambda\alpha_3^* &  0  & 0 & 0\\
	\alpha_4^* & 0  & 0 & \alpha_5^*
	\end{pmatrix},
	$$
	 we have that the action of ${\rm Aut} ({\mathcal N}^4_{02}(\lambda\neq1))$ on the subspace
$\langle \sum\limits_{i=1}^{5}\alpha_i\nabla_i  \rangle$
is given by
$\langle \sum\limits_{i=1}^{5}\alpha_i^{*}\nabla_i\rangle,$
where
\begin{longtable}{lcllcllcl}
$\alpha^*_1$&$=$&$r x \alpha _1+t x (2-\lambda ) \alpha _3+r u \alpha _5,$ &
$\alpha_2^*$&$=$&$x^2 \left(x \alpha _2-y (1-\lambda ) \lambda ^2 \alpha _3\right),$&
$\alpha^*_3$&$=$&$x^4 \alpha _3,$ \\
$\alpha^*_4$&$=$&$t x \lambda  \alpha _3+r x \alpha _4+r u \alpha _5,$ &
$\alpha_5^*$&$=$&$r^2 \alpha _5.$\\
\end{longtable}

We are interested only in the cases with 
 \begin{center}
$\alpha_3\neq 0, \ $   $(\alpha_1,\alpha_4,\alpha_5)\neq (0,0,0).$
 \end{center} 
\begin{enumerate}
\item $\lambda=0,$ then choosing $t=-\frac{r \left(x \alpha _1+u \alpha _5\right)}{2 x \alpha _3},$ we have $\alpha_1^*=0.$
\begin{enumerate}
    \item $\alpha_5=\alpha_2=0,\ \alpha_4\neq0,$ then choosing $x=1,\ r=\frac{\alpha_3}{\alpha_4},$ we have the representative  $\left\langle\nabla_3+\nabla_4\right\rangle;$
    \item $\alpha_5=0,\ \alpha_2\neq0,\ \alpha_4=0,$ then choosing $x=\frac{\alpha_2}{\alpha_3},$ we have the representative  $\left\langle\nabla_2+\nabla_3\right\rangle;$
    \item $\alpha_5=0,\ \alpha_2\neq0,\ \alpha_4\neq0,$ then choosing $x=\frac{\alpha_2}{\alpha_3},\ r=\frac{\alpha_2^3}{\alpha_3^2\alpha_4},$ we have the representative  $\left\langle\nabla_2+\nabla_3+\nabla_4\right\rangle;$
    \item $\alpha_5\neq0,\ \alpha_2=0,$ then choosing $x=1,\ r=\sqrt{\frac{\alpha_3}{\alpha_4}},\ u=-\frac{\alpha_4}{\alpha_5},$ we have the representative  $\left\langle\nabla_3+\nabla_5\right\rangle;$
    \item $\alpha_5\neq0,\ \alpha_2\neq0,$ then choosing $x=\frac{\alpha_2}{\alpha_3},\ r=\frac{\alpha_2^2}{\alpha_3^2}\sqrt{\frac{\alpha_3}{\alpha_4}},\ u=-\frac{\alpha_2\alpha_4}{\alpha_3\alpha_5},$ we have the representative  $\left\langle\nabla_2+\nabla_3+\nabla_5\right\rangle.$
\end{enumerate}
\item $\lambda\neq0,$ then choosing $y=\frac{x \alpha _2}{(1-\lambda ) \lambda ^2 \alpha _3},\  t=-\frac{r \left(x \alpha _4+u \alpha _5\right)}{x \lambda  \alpha _3},$ we have $\alpha_2^*=\alpha_4^*=0.$ 
\begin{enumerate}
    \item $\alpha_5\neq0,$ then choosing $u=\frac{\lambda  \alpha _1}{2(1-\lambda)\alpha _5},\ x=1,\ r=\sqrt{\frac{\alpha_3}{\alpha_5}},$ we have the representative  $\left\langle\nabla_3+\nabla_5\right\rangle_{\lambda\neq0},$
    \item $\alpha_5=0,\ \alpha_1\neq0,$ then choosing $x=1,\ r=\frac{\alpha_3}{\alpha_1},$ we have the representative $\left\langle\nabla_1+\nabla_3\right\rangle_{\lambda\neq0}.$
\end{enumerate}
\end{enumerate}

Summarizing all cases, we have the following distinct orbits:
\begin{center} 
$\langle\nabla_3+\nabla_4\rangle_{\lambda=0},$  
$\langle\nabla_2+\nabla_3\rangle_{\lambda=0},$  $\langle\nabla_2+\nabla_3+\nabla_4\rangle_{\lambda=0},$

$\langle\nabla_2+\nabla_3+\nabla_5\rangle_{\lambda=0},$ 
$\langle\nabla_3+\nabla_5\rangle_{\lambda\neq1},$  $\langle\nabla_1+\nabla_3\rangle_{\lambda\neq 0; 1},$
\end{center}
which gives the following new algebras (see section \ref{secteoA}):

\begin{center}
${\rm N}_{121},$
${\rm N}_{122},$
${\rm N}_{123},$
${\rm N}_{124},$
${\rm N}_{125}^{\lambda\neq1},$
${\rm N}_{126}^{\lambda\neq0; 1}.$
\end{center}

\subsubsection{Central extensions of ${\mathcal N}^4_{02}( 1)$}

Let us use the following notations:
	\begin{longtable}{lllllll} 
	$\nabla_1 = [\Delta_{14}+\Delta_{41}],$ & $\nabla_2=[\Delta_{21}],$ &
	$\nabla_3 = [\Delta_{13}+\Delta_{22}+\Delta_{31}],$ &
	$\nabla_4 = [\Delta_{41}],$&
	$\nabla_5 = [\Delta_{44}].$ 
	\end{longtable}	
	
Take $\theta=\sum\limits_{i=1}^{5}\alpha_i\nabla_i\in {\rm H^2}({\mathcal N}^4_{02}(1)).$
	The automorphism group of ${\mathcal N}^4_{02}(1)$ consists of invertible matrices of the form
$$\phi=
	\begin{pmatrix}
	x &  0  & 0 & 0\\
	y &  x^2  & 0 & 0\\
	z &  xy  & x^3 & t\\
    u &  0  & 0 & r
	\end{pmatrix}.
	$$
	Since
	$$
	\phi^T\begin{pmatrix}
	0 & 0 & \alpha_3 & \alpha_1\\
	\alpha_2 &  \alpha_3  & 0 & 0\\
	\alpha_3 &	0 & 0 & 0\\
	\alpha_1+\alpha_4 & 0  & 0 & \alpha_5
	\end{pmatrix} \phi=\begin{pmatrix}
	\alpha^* & \alpha^{**} & \alpha_3^* & \alpha_1^*\\
	\alpha_2^{*}+\alpha^{**} &	\alpha_3^* & 0 & 0\\
	\alpha_3^* &  0  & 0 & 0\\
	\alpha_1^*+\alpha_4^* & 0  & 0 & \alpha_5^*
	\end{pmatrix},
$$
we have that the action of ${\rm Aut} ({\mathcal N}^4_{02}( 1))$ on the subspace
$\langle \sum\limits_{i=1}^{5}\alpha_i\nabla_i  \rangle$
is given by
$\langle \sum\limits_{i=1}^{5}\alpha_i^{*}\nabla_i\rangle,$
where
\begin{longtable}{lcllcllcl}
$\alpha^*_1$&$=$&$r x \alpha _1+t x \alpha _3+r u \alpha _5,$ &
$\alpha_2^*$&$=$&$x^3 \alpha _2,$\ &
$\alpha^*_3$&$=$&$x^4 \alpha _3,$ \\
$\alpha^*_4$&$=$&$rx\alpha_4,$ &
$\alpha_5^*$&$=$&$r^2 \alpha _5.$\\
\end{longtable}

We are interested only in the cases with 
 \begin{center}
$\alpha_3\neq 0, \ $   $(\alpha_1,\alpha_4,\alpha_5)\neq (0,0,0), \ $ $(\alpha_2,\alpha_4)\neq (0,0).$ 
 \end{center} 
$\alpha_3\neq0,$ then choosing $t=-\frac{r \left(x \alpha _1+u \alpha _5\right)}{x \alpha _3},$ we have $\alpha_1^*=0.$
\begin{enumerate}
    \item $\alpha_4=0,$ then $\alpha_5\neq0,\ \alpha_2\neq0.$ Choosing $x=\frac{\alpha _2}{\alpha _3},\ r=\frac{\alpha _2^2}{\alpha_3\sqrt{\alpha_3\alpha _5}},$ we have the representative  $\left\langle\nabla_2+\nabla_3+\nabla_5\right\rangle;$
    \item $\alpha_4\neq0,\ \alpha_2=0,\ \alpha_5=0,$ then choosing $x=1,\ r=\frac{\alpha_3}{\alpha_4},$ we have the representative  $\left\langle\nabla_3+\nabla_4\right\rangle;$
    \item $\alpha_4\neq0,\ \alpha_2=0,\ \alpha_5\neq0$ then choosing $x=\frac{\alpha _4}{\sqrt{\alpha_3\alpha_5}},\ r=\frac{\alpha_4^2}{\sqrt{\alpha_3\alpha_5^3}},$ we have the representative  $\left\langle\nabla_3+\nabla_4+\nabla_5\right\rangle;$
    \item $\alpha_4\neq0,\ \alpha_2\neq0,$ then choosing $x=\frac{\alpha_2}{\alpha_3},\ r=\frac{\alpha_2^3}{\alpha_3^2\alpha_4},$ we have the representative  $\left\langle\nabla_2+\nabla_3+\nabla_4+\alpha\nabla_5\right\rangle.$
\end{enumerate}

Summarizing all cases, we have the following distinct orbits
\begin{longtable} {llll}
$\langle\nabla_2+\nabla_3+\nabla_4\rangle,$ & $\langle\nabla_3+\nabla_4\rangle,$ & $\langle\nabla_3+\nabla_4+\nabla_5\rangle,$ & $\langle\nabla_2+\nabla_3+\nabla_4+\alpha\nabla_5\rangle,$ \\
\end{longtable}
which gives the following new algebras (see section \ref{secteoA}):

\begin{center}
${\rm N}_{127},$
${\rm N}_{128},$
${\rm N}_{129},$
${\rm N}_{130}^{\alpha}.$
\end{center}

\subsubsection{Central extensions of ${\mathcal N}^4_{06}(\alpha\neq0)$}
Let us use the following notations:
	\begin{longtable}{lllllll} 
$\nabla_1 = [\Delta_{13}],$&
$\nabla_2 = [\Delta_{21}],$ & $\nabla_3=[\Delta_{31}],$ &
$\nabla_4=[\frac{2-\alpha}{\alpha}\Delta_{14}+\Delta_{22}+\Delta_{23}-\Delta_{32}+\Delta_{41}],$ &
$\nabla_5 = [\Delta_{33}].$ 
	\end{longtable}	
	
Take $\theta=\sum\limits_{i=1}^{5}\alpha_i\nabla_i\in {\rm H^2}({\mathcal N}^4_{06}(\alpha \neq 0)).$
	The automorphism group of ${\mathcal N}^4_{06}(\alpha \neq 0)$ consists of invertible matrices of the form
$$\phi=
	\begin{pmatrix}
	x &  0  & 0 & 0\\
	y &  x^2  & 0 & 0\\
	z &  0  & x^2 & 0\\
    u &  x((1+\alpha)y+z)  & v & x^3
	\end{pmatrix}.
	$$
	Since
	$$
	\phi^T\begin{pmatrix}
	0 & 0 & \alpha_1 & \frac{2-\alpha}{\alpha}\alpha_4\\
	\alpha_2 &  \alpha_4  & \alpha_4 & 0\\
	\alpha_3 &	-\alpha_4 & \alpha_5 & 0\\
	\alpha_4 & 0  & 0 & 0
	\end{pmatrix} \phi=\begin{pmatrix}
	\alpha^* & \alpha^{**} & \alpha_1^*+\alpha^{**} & \frac{2-\alpha}{\alpha}\alpha_4^*\\
	\alpha_2^*+\alpha \alpha^{**} &  \alpha_4^*  & \alpha_4^* & 0\\
	\alpha_3^* &	-\alpha_4^* & \alpha_5^* & 0\\
	\alpha_4^* & 0  & 0 & 0
	\end{pmatrix},
$$
we have that the action of ${\rm Aut} ({\mathcal N}^4_{06}(\alpha \neq 0))$ on the subspace
$\langle \sum\limits_{i=1}^{5}\alpha_i\nabla_i  \rangle$
is given by
$\langle \sum\limits_{i=1}^{5}\alpha_i^{*}\nabla_i\rangle,$
where
\begin{longtable}{lcl}
$\alpha^*_1$&$=$&${x\left(x^2   \alpha _1-\left(v (\alpha-2 )+x \left(2 z (1-\alpha )+y \left(2+\alpha -\alpha ^2\right)\right)\right) {\alpha^{-1} } \alpha _4+x z   \alpha _5\right)},$ \\
$\alpha_2^*$&$=$&$x^2 \left(x \alpha _2+(2 z-y (1-\alpha )) \alpha  \alpha _4\right),$\\
$\alpha^*_3$&$=$&$x \left(x^2 \alpha _3+(v-x y) \alpha _4+x z \alpha _5\right),$ \\
$\alpha^*_4$&$=$&$x^4 \alpha _4,$\\
$\alpha_5^*$&$=$&$x^4 \alpha_5.$\\
\end{longtable}
We are interested only in the cases with $\alpha_4\neq 0.$ Choosing 
\begin{center}
$z=\frac{1}{2}((1-\alpha)y -\frac{x \alpha _2}{\alpha\alpha_4})$ and 
$v=-\frac{x(2x\alpha\alpha_3 \alpha_4-2y\alpha \alpha_4^2-x \alpha _2 \alpha_5-y(\alpha-1) \alpha  \alpha_4 \alpha_5)}{2\alpha\alpha_4^2},$
\end{center} we have $\alpha_2^*=\alpha_3^*=0.$
\begin{enumerate}
    \item $\alpha=1,$ then choosing $y=\frac{x\alpha_1}{\alpha_4}$ we have the family of representatives  $\left\langle\nabla_4+\beta\nabla_5\right\rangle;$
    \item $\alpha\neq1,$  
   $(\alpha-1)^2\alpha_5=-\alpha_4,\ \alpha_1=0,$ then we have the representative  $\left\langle\nabla_4+\frac{1}{(\alpha-1)^2}\nabla_5\right\rangle;$
    \item $\alpha\neq1,$ $(\alpha-1)^2\alpha_5=-\alpha_4,\ \alpha_1\neq0,$ then choosing $x=\frac{\alpha_1}{\alpha_5},$ we have the representative \\ $\left\langle\nabla_1+\nabla_4+\frac{1}{(\alpha-1)^2}\nabla_5\right\rangle;$
    \item $\alpha\neq1,$ $(\alpha-1)^2\alpha_5\neq-\alpha_4$ then choosing $y=\frac{x \alpha\alpha_1}{\alpha_4+(\alpha-1)^2 \alpha _5},$ we have the family of representatives $\left\langle\nabla_4+\beta\nabla_5\right\rangle_{\beta\neq \frac{1}{(\alpha-1)^2}}.$
\end{enumerate}

Summarizing all cases, we have the following distinct orbits
\begin{center} 
$\langle\nabla_4+\beta\nabla_5\rangle,$  
$\langle\nabla_1+\nabla_4+\frac{1}{(\alpha-1)^2}\nabla_5\rangle_{\alpha\neq1},$
\end{center}
which gives the following new algebras (see section \ref{secteoA}):

\begin{center}
${\rm N}_{131}^{\alpha\neq0}, \beta,$
${\rm N}_{132}^{\alpha\neq0,1}.$
\end{center}

\subsubsection{Central extensions of ${\mathcal N}^4_{08}$}
Let us use the following notations:
	\begin{longtable}{lllllll} 
$\nabla_1 = [\Delta_{12}],$& $\nabla_2 = [\Delta_{21}],$ & $\nabla_3=[\Delta_{31}],$ &
$\nabla_4=[\Delta_{33}],$ &
$\nabla_5=[\Delta_{23}-\Delta_{32}-\Delta_{14}+\Delta_{41}].$
\end{longtable}	
	
Take $\theta=\sum\limits_{i=1}^{5}\alpha_i\nabla_i\in {\rm H^2}({\mathcal N}^4_{08}).$
The automorphism group of ${\mathcal N}^4_{08}$ consists of invertible matrices of the form
$$\phi=
	\begin{pmatrix}
	x &  0  & 0 & 0\\
	y &  x^2  & 0 & 0\\
	z &  0  & x^2 & 0\\
    u &  x(y+z)  & v & x^3
	\end{pmatrix}.
	$$
	Since
	$$
	\phi^T\begin{pmatrix}
	0 & \alpha_1 & 0  & -\alpha_5\\
	\alpha_2 & 0  & \alpha_5 & 0\\
	\alpha_3 &	-\alpha_5 & \alpha_4 & 0\\
	\alpha_5 & 0  & 0 & 0
	\end{pmatrix} \phi=\begin{pmatrix}
	\alpha^* & \alpha_1^* & \alpha^{**}  & -\alpha_5^*\\
	\alpha_2^*+\alpha^{**} & 0  & \alpha_5^* & 0\\
	\alpha_3^* &	-\alpha_5^* & \alpha_4^* & 0\\
	\alpha_5^* & 0  & 0 & 0
	\end{pmatrix},
$$
we have that the action of ${\rm Aut} ({\mathcal N}^4_{08})$ on the subspace
$\langle \sum\limits_{i=1}^{5}\alpha_i\nabla_i  \rangle$
is given by
$\langle \sum\limits_{i=1}^{5}\alpha_i^{*}\nabla_i\rangle,$
where
\begin{longtable}{lcllcllcl}
$\alpha^*_1$&$=$&$x^2(x\alpha_1-(y+2z)\alpha_5),$ &
$\alpha^*_3$&$=$&$ x(x^2 \alpha _3+x z \alpha _4+(v-x y) \alpha _5),$ &
$\alpha_5^*$&$=$&$x^4 \alpha_5.$\\
$\alpha_2^*$&$=$&$x(x^2 \alpha _2-x z \alpha _4+(v+2 x z) \alpha _5),$&                                               
$\alpha^*_4$&$=$&$x^4 \alpha _4,$\\
\end{longtable}
We are interested only in the cases with $\alpha_5\neq 0.$ Choosing 
\begin{center}
    $v=\frac{x \left(x \alpha _1 \left(\alpha _4-2 \alpha _5\right)+\alpha _5 \left(2 y \alpha _5-2 x \alpha _2-y \alpha _4\right)\right)}{2 \alpha _5^2},$ and
$z=\frac{x \alpha _1--y\alpha _5}{2\alpha _5},$

\end{center}we have $\alpha_1^*=\alpha_2^*=0.$
\begin{enumerate}
    \item $\alpha_4=\alpha_3=0,$ then we have the representative  $\left\langle\nabla_5\right\rangle;$
    \item $\alpha_4=0,\ \alpha_3\neq0,$ then choosing $x=\frac{\alpha_3}{\alpha_5},$ we have the representative  $\left\langle\nabla_3+\nabla_5\right\rangle;$
    \item $\alpha_4\neq0,$ then choosing $y=\frac{x \alpha _3}{\alpha _4},$ we have the family of representatives $\left\langle\alpha\nabla_4+\nabla_5\right\rangle_{\alpha\neq0}.$
\end{enumerate}

Summarizing all cases, we have the following distinct orbits
\begin{center} 
$\langle\nabla_3+\nabla_5\rangle,$   $\langle\alpha\nabla_4+\nabla_5\rangle,$\\
\end{center}
which gives the following new algebras (see section \ref{secteoA}):

\begin{center}
${\rm N}_{133}^{\alpha\neq0},$
${\rm N}_{134}^{\alpha\neq0,1}.$
\end{center}

\subsubsection{Central extensions of ${\mathcal N}^4_{09}$}
Let us use the following notations:
	\begin{longtable}{lllllll} 
$\nabla_1 = [\Delta_{13}],$& $\nabla_2 = [\Delta_{21}],$ & $\nabla_3=[\Delta_{31}],$ &
$\nabla_4=[\Delta_{33}],$ &
$\nabla_5=[\Delta_{14}+\Delta_{32}].$
\end{longtable}	
	
Take $\theta=\sum\limits_{i=1}^{5}\alpha_i\nabla_i\in {\rm H^2}({\mathcal N}^4_{09}).$
The automorphism group of ${\mathcal N}^4_{09}$ consists of invertible matrices of the form
$$\phi=
	\begin{pmatrix}
	x &  0  & 0 & 0\\
	y &  x^2  & 0 & 0\\
	z &  0  & x^2 & 0\\
    u &  x(y+z)  & v & x^3
	\end{pmatrix}.
	$$
	Since
	$$
	\phi^T\begin{pmatrix}
	0 & 0 & \alpha_1  & \alpha_5\\
	\alpha_2 & 0  & 0 & 0\\
	\alpha_3 &	\alpha_5 & \alpha_4 & 0\\
	0 & 0  & 0 & 0
	\end{pmatrix} \phi=\begin{pmatrix}
	\alpha^* &  \alpha^{**}  & \alpha_1^* & \alpha_5^*\\
	\alpha_2^* & 0  & 0 & 0\\
	\alpha_3^*+\alpha^{**} & \alpha_5^* & \alpha_4^* & 0\\
	0 & 0 & 0 & 0
	\end{pmatrix},
$$
we have that the action of ${\rm Aut} ({\mathcal N}^4_{09})$ on the subspace
$\langle \sum\limits_{i=1}^{5}\alpha_i\nabla_i  \rangle$
is given by
$\langle \sum\limits_{i=1}^{5}\alpha_i^{*}\nabla_i\rangle,$
where
\begin{longtable}{lcllcllcl}
$\alpha^*_1$&$=$&$x(x^2\alpha_1+xz\alpha_4+v\alpha_5),$ &
$\alpha_2^*$&$=$&$x^3 \alpha _2,$\\
$\alpha^*_3$&$=$&$x^2(x\alpha_3+z(\alpha _4-2 \alpha _5)),$ &
$\alpha^*_4$&$=$&$x^4 \alpha _4,$&
$\alpha_5^*$&$=$&$x^4 \alpha_5.$\\
\end{longtable}
We are interested only in the cases with $\alpha_5\neq 0.$ Choosing $v=-\frac{x \left(x \alpha _1+z \alpha _4\right)}{\alpha _5},$ we have $\alpha_1^*=0.$
\begin{enumerate}
    \item $\alpha_4=2\alpha_5,\ \alpha_3=\alpha_2=0,$ then we have the representative  $\left\langle2\nabla_4+\nabla_5\right\rangle;$
    \item $\alpha_4=2\alpha_5,\ \alpha_3=0,\ \alpha_2\neq0,$ then choosing $x=\frac{\alpha_2}{\alpha_5},$ we have the representative  $\left\langle\nabla_2+2\nabla_4+\nabla_5\right\rangle;$
    \item $\alpha_4=2\alpha_5,\ \alpha_3\neq0,$ then choosing $x=\frac{\alpha_3}{\alpha_5},$ we have the family of representatives $\left\langle\alpha\nabla_2+\nabla_3+ 2\nabla_4+ \nabla_5\right\rangle;$
    \item $\alpha_4\neq2\alpha_5,\ \alpha_2=0,$ then choosing $x=1, \ z=-\frac{\alpha _3}{\alpha _4-2 \alpha _5},$ we have the representative $\left\langle\alpha\nabla_4+ \nabla_5\right\rangle_{\alpha\neq2};$
    \item $\alpha_4\neq2\alpha_5,\ \alpha_2\neq0,$ then choosing $x=\frac{\alpha_2}{\alpha_5}, \ z=-\frac{\alpha_2\alpha _3}{(\alpha_4-2\alpha_5) \alpha_5},$ we have the family of  representatives $\left\langle\nabla_2+\alpha\nabla_4+ \nabla_5\right\rangle_{\alpha\neq2}.$
\end{enumerate}

Summarizing all cases, we have the following distinct orbits
\begin{longtable} {llll}
$\langle\alpha\nabla_2+\nabla_3+2\nabla_4+\nabla_5\rangle,$ & $\langle\alpha\nabla_4+\nabla_5\rangle,$ & $\langle\nabla_2+\alpha\nabla_4+\nabla_5\rangle,$\\
\end{longtable}
which gives the following new algebras (see section \ref{secteoA}):

\begin{center}
${\rm N}_{135}^{\alpha},$
${\rm N}_{136}^{\alpha},$
${\rm N}_{137}^{\alpha}.$
\end{center}

\subsubsection{Central extensions of ${\mathcal N}^4_{10}$}
Let us use the following notations:
	\begin{longtable}{lllllll} 
$\nabla_1 = [\Delta_{11}],$& $\nabla_2 = [\Delta_{14}],$ & $\nabla_3=[\Delta_{21}],$ &
$\nabla_4=[\Delta_{22}],$ &
$\nabla_5=[\Delta_{23}-\Delta_{32}].$
\end{longtable}	
	
Take $\theta=\sum\limits_{i=1}^{5}\alpha_i\nabla_i\in {\rm H^2}({\mathcal N}^4_{10}).$
The automorphism group of ${\mathcal N}^4_{10}$ consists of invertible matrices of the form
$$\phi=
	\begin{pmatrix}
	x &  0  & 0 & 0\\
	0 &  y  & 0 & 0\\
	0 &  z  & xy & 0\\
    u &  v  & xz & x^2y
	\end{pmatrix}.
	$$
	Since
	$$
	\phi^T\begin{pmatrix}
	\alpha_1 & 0 & 0  & \alpha_2\\
	\alpha_3 & \alpha_4  & \alpha_5 & 0\\
	0 &	-\alpha_5 & 0 & 0\\
	0 & 0  & 0 & 0
	\end{pmatrix} \phi=\begin{pmatrix}
    \alpha_1^* & \alpha^* & \alpha^{**}  & \alpha_2^*\\
	\alpha_3^* & \alpha_4^*  & \alpha_5^* & 0\\
	0 &	-\alpha_5^* & 0 & 0\\
	0 & 0  & 0 & 0
	\end{pmatrix},
$$
we have that the action of ${\rm Aut} ({\mathcal N}^4_{10})$ on the subspace
$\langle \sum\limits_{i=1}^{5}\alpha_i\nabla_i  \rangle$
is given by
$\langle \sum\limits_{i=1}^{5}\alpha_i^{*}\nabla_i\rangle,$
where
\begin{longtable}{lcllcllcl}
$\alpha^*_1$&$=$&$x \left(x \alpha _1+u \alpha _2\right),$ &
$\alpha_2^*$&$=$&$x^3y \alpha _2,$ &
$\alpha^*_3$&$=$&$x y \alpha _3,$ \\
$\alpha^*_4$&$=$&$y^2 \alpha _4,$&
$\alpha_5^*$&$=$&$xy^2 \alpha_5.$\\
\end{longtable}
We are interested only in the cases with $\alpha_2\neq 0.$ Choosing $v=-\frac{x \alpha _1}{\alpha _2},$ we have $\alpha_1^*=0.$
\begin{enumerate}
    \item $\alpha_5=\alpha_3=\alpha_4=0,$ then we have the representative  $\left\langle\nabla_2\right\rangle;$
    \item $\alpha_5=\alpha_3=0,\ \alpha_4\neq0,$ then choosing $x=1,\ y=\frac{\alpha_2}{\alpha_4},$ we have the representative  $\left\langle\nabla_2+ \nabla_4\right\rangle;$
    \item $\alpha_5=0,\ \alpha_3\neq0,\ \alpha_4=0,$ then choosing $x=\sqrt{\frac{\alpha_3}{\alpha_2}},$ we have the representative $\left\langle\nabla_2+\nabla_3\right\rangle;$
    \item $\alpha_5=0,\ \alpha_3\neq0,\ \alpha_4\neq0,$ then choosing $x=\sqrt{\frac{\alpha_3}{\alpha_2}},\ y=\frac{\alpha_3}{\alpha_4}\sqrt{\frac{\alpha_3}{\alpha_2}}$ we have the representative $\left\langle\nabla_2+\nabla_3+ \nabla_4\right\rangle;$
    \item $\alpha_5\neq0,\ \alpha_3=0,\ \alpha_4=0,$ then choosing $x=1,\ y=\frac{\alpha_2}{\alpha_5}$ we have the representative $\left\langle\nabla_2+\nabla_5\right\rangle;$
    \item $\alpha_5\neq0,\ \alpha_3=0,\ \alpha_4\neq0,$ then choosing $x=\frac{\alpha _4}{\alpha _5},\ y=\frac{\alpha_2 \alpha_4^2}{\alpha_5^3},$ we have the representative $\left\langle\nabla_2+\nabla_4+\nabla_5\right\rangle;$
    \item $\alpha_5\neq0,\ \alpha_3\neq0,$ then choosing $x=\sqrt{\frac{\alpha_3}{\alpha_2}},\ y=\frac{\alpha_3 } {\alpha_5},$ we have the family of representatives $\left\langle\nabla_2+\nabla_3+\alpha\nabla_4+\nabla_5\right\rangle.$
\end{enumerate}

Summarizing all cases, we have the following distinct orbits
\begin{center} 
$\langle\nabla_2\rangle,$  
$\langle\nabla_2+\nabla_4\rangle,$ 
$\langle\nabla_2+\nabla_3\rangle,$  
$\langle\nabla_2+\nabla_3+\nabla_4\rangle,$
$\langle\nabla_2+\nabla_5\rangle,$  
$\langle\nabla_2+\nabla_4+\nabla_5\rangle,$  $\langle\nabla_2+\nabla_3+\alpha\nabla_4+\nabla_5\rangle,$
\end{center}
which gives the following new algebras (see section \ref{secteoA}):

\begin{center}
${\rm N}_{138},$
${\rm N}_{139},$
${\rm N}_{140},$
${\rm N}_{141},$
${\rm N}_{142},$
${\rm N}_{143},$
${\rm N}_{144}^{\alpha}.$
\end{center}

\subsubsection{Central extensions of ${\mathcal N}^4_{16}$}
Let us use the following notations:
	\begin{longtable}{lllllll} 
$\nabla_1 = [\Delta_{11}],$& $\nabla_2 = [\Delta_{21}],$ & $\nabla_3=[\Delta_{22}],$ &
$\nabla_4=[\Delta_{14}+\Delta_{23}],$ &
$\nabla_5=[-\Delta_{23}+\Delta_{32}].$
\end{longtable}	
	
Take $\theta=\sum\limits_{i=1}^{5}\alpha_i\nabla_i\in {\rm H^2}({\mathcal N}^4_{16}).$
The automorphism group of ${\mathcal N}^4_{16}$ consists of invertible matrices of the form
$$\phi=
	\begin{pmatrix}
	x &  0  & 0 & 0\\
	0 &  x^2  & 0 & 0\\
	0 &  y  & x^3 & 0\\
    u &  v  & xy & x^4
	\end{pmatrix}.
	$$
	Since
	$$
	\phi^T\begin{pmatrix}
	\alpha_1 & 0 & 0  & \alpha_4\\
	\alpha_2 & \alpha_3  & \alpha_4-\alpha_5 & 0\\
	0 &	\alpha_5 & 0 & 0\\
	0 & 0  & 0 & 0
	\end{pmatrix} \phi=\begin{pmatrix}
    \alpha_1^* & \alpha^* & \alpha^{**}  & \alpha_4^*\\
	\alpha_2^* & \alpha_3^*+\alpha^{**}  & \alpha_4^*-\alpha_5^* & 0\\
	0 &	\alpha_5^* & 0 & 0\\
	0 & 0  & 0 & 0
	\end{pmatrix},
$$
we have that the action of ${\rm Aut} ({\mathcal N}^4_{16})$ on the subspace
$\langle \sum\limits_{i=1}^{5}\alpha_i\nabla_i  \rangle$
is given by
$\langle \sum\limits_{i=1}^{5}\alpha_i^{*}\nabla_i\rangle,$
where
\begin{longtable}{lcllcllcl}
$\alpha^*_1$&$=$&$x(x\alpha_1+u\alpha_4),$ &
$\alpha_2^*$&$=$&$x^3 \alpha_2,$ &
$\alpha^*_3$&$=$&$x^4\alpha_3,$ \\
$\alpha^*_4$&$=$&$x^5 \alpha _4,$&
$\alpha_5^*$&$=$&$x^5 \alpha_5.$\\
\end{longtable}
We are interested only in the cases with $\alpha_4\neq 0.$ Choosing $u=-\frac{x\alpha_1}{\alpha_4},$ we have $\alpha_1^*=0.$
\begin{enumerate}
    \item $\alpha_3=\alpha_2=0,$ then we have the family of representatives  $\left\langle\nabla_4+\alpha\nabla_5\right\rangle;$
    \item $\alpha_3=0,\ \alpha_2\neq0,$ then choosing $x=\sqrt{\frac{\alpha_2}{\alpha_4}},$ we have the representative  $\left\langle\nabla_2+ \nabla_4+ \alpha\nabla_5\right\rangle;$
    \item $\alpha_3\neq0,$ then choosing $x=\frac{\alpha_3}{\alpha_4},$ we have the representative $\left\langle\alpha\nabla_2+\nabla_3+\nabla_4+ \beta\nabla_5\right\rangle.$
\end{enumerate}

Summarizing all cases, we have the following distinct orbits
\begin{longtable} {llll}
$\langle\nabla_4+\alpha\nabla_5\rangle,$ & $\langle\nabla_2+\nabla_4+\alpha\nabla_5\rangle,$ & $\langle\alpha\nabla_2+\nabla_3+\nabla_4+\beta\nabla_5\rangle,$\\
\end{longtable}
which gives the following new algebras (see section \ref{secteoA}):

\begin{center}
${\rm N}_{145}^{\alpha},$
${\rm N}_{146}^{\alpha},$
${\rm N}_{147}^{\alpha,\beta}.$
\end{center}

\subsection{$2$-dimensional central extensions of two-generated $3$-dimensional  nilpotent Novikov algebras}

\subsubsection{The description of second cohomology spaces.}
In the following table, we give the description of the second cohomology space of two-generated  $3$-dimensional nilpotent Novikov algebras

\begin{longtable}{ll llllllllllll}
${\mathcal N}^{3*}_{01}$ &:&  $e_1 e_1 = e_2$\\

\multicolumn{8}{l}{${\rm H^2}({\mathcal N}^{3*}_{01}) =
\langle[\Delta_{12}+\Delta_{21}], [\Delta_{13}+\Delta_{31}], [\Delta_{21}], [\Delta_{31}], [\Delta_{33}] \rangle$}\\
\hline

${\mathcal N}^{3*}_{02}$ &:&  $e_1 e_1 = e_3$ &  $e_2 e_2=e_3$ \\
 
\multicolumn{8}{l}{${\rm H^2}({\mathcal N}^{3*}_{02}) =\langle [\Delta_{12}], [\Delta_{21}], [\Delta_{22}]\rangle$}\\
\hline
 
${\mathcal N}^{3*}_{03}$ &:&   $e_1 e_2=e_3$ & $e_2 e_1=-e_3$   \\

\multicolumn{8}{l}{${\rm H^2}({\mathcal N}^{3*}_{03}) =   \langle [\Delta_{11}], [\Delta_{21}], [\Delta_{22}] \rangle$} \\
\hline

${\mathcal N}^{3*}_{04}(\lambda\neq 0)$ &:&
$e_1 e_1 = \lambda e_3$  & $e_2 e_1=e_3$  & $e_2 e_2=e_3$   \\

\multicolumn{8}{l}{${\rm H^2}({\mathcal N}^{3*}_{04}(\lambda\neq 0)) =   \langle [\Delta_{12}], [\Delta_{21}], [\Delta_{22}] \rangle$} \\
\hline

${\mathcal N}^{3*}_{04}(0)$ &:& $e_1 e_2=e_3$  \\

\multicolumn{8}{l}{${\rm H^2}({\mathcal N}^{3*}_{04}( 0)) =\langle
[\Delta_{11}], [\Delta_{13}], [\Delta_{21}],  [\Delta_{22}],   [\Delta_{23}]-[\Delta_{32}]
\rangle$}  \\
 \hline


 


\end{longtable}

\subsubsection{Central extensions of ${\mathcal N}^{3*}_{01}$}

Let us use the following notations:
\[
\nabla_1=[\Delta_{12}+\Delta_{21}], \quad \nabla_2=[\Delta_{13}+\Delta_{31}], \quad \nabla_3=[\Delta_{21}], \quad \nabla_4=[\Delta_{31}], \quad  \nabla_5=[\Delta_{33}]. \]

The automorphism group of ${\mathcal N}^{3*}_{01}$ consists of invertible matrices of the form

\[\phi=\begin{pmatrix}
x & 0 & 0\\
u & x^2 & w\\
z & 0 & y
\end{pmatrix}. \]

Since

\[ \phi^T\begin{pmatrix}
0 & \alpha_1 & \alpha_2\\
\alpha_1+\alpha_3 & 0 & 0\\
\alpha_2+\alpha_4 & 0 & \alpha_5
\end{pmatrix}\phi =
\begin{pmatrix}
\alpha^* & \alpha^*_1 & \alpha^*_2 \\
\alpha^*_1+ \alpha^*_3& 0 & 0 \\
\alpha^*_2+\alpha^*_4 & 0 & \alpha^*_5
\end{pmatrix},
\]
the action of $\operatorname{Aut} (\mathcal{N}_{01}^{3*})$ on subspace
$\Big\langle \sum\limits_{i=1}^5 \alpha_i\nabla_i \Big\rangle$ is given by
$\Big\langle \sum\limits_{i=1}^5 \alpha_i^*\nabla_i \Big\rangle,$
where
\begin{longtable}{lcl}
$\alpha^*_1$&$=$&$x^3 \alpha_1,$\\
$\alpha^*_2$&$=$&$w x \alpha _1+x y \alpha _2+y z \alpha _5,$\\
$\alpha^*_3$&$=$&$x^3 \alpha_3,$\\
$\alpha^*_4$&$=$&$ x(w\alpha_3+y \alpha_4),$\\
$\alpha^*_5$&$=$&$y^2 \alpha_5.$
\end{longtable}

We are interested  only in $2$-dimensional central extensions and consider the vector space generated by the following two cocycles:
\begin{center}
$\theta_1=\alpha_1\nabla_1+\alpha_2\nabla_2+\alpha_3\nabla_3+\alpha_4\nabla_4+\alpha_5\nabla_5 \ \ \text{and} \ \  \theta_2=\beta_1\nabla_1+\beta_2\nabla_2+\beta_4\nabla_4+\beta_5\nabla_5.$
\end{center}
Our aim is to find only central extensions with $(\alpha_3,\alpha_4, \beta_3,\beta_4)\neq 0.$
Hence, we have the following cases.

\begin{enumerate}
    \item $\alpha_3\neq0,$ then we have
    \begin{longtable}{lcllcl}
    $\alpha^*_1$&$=$&$x^3 \alpha_1,$ &   $\beta^*_1$&$=$&$x^3\beta_1,$ \\
    $\alpha^*_2$&$=$&$w x \alpha _1+x y \alpha _2+y z \alpha _5,$ &   $\beta^*_2$&$=$&$w x \beta_1+xy \beta_2+y z \beta_5,$\\ 
    $\alpha^*_3$&$=$&$x^3 \alpha_3,$   & $\beta^*_3$&$=$&$0,$\\ 
    $\alpha_4^*$&$=$&$x(w\alpha_3+y \alpha_4),$   & $\beta_4^*$&$=$&$x y\beta_4,$ \\
    $\alpha_5^*$&$=$&$y^2\alpha_5,$ &   $\beta_5^*$&$=$&$y^2\beta_5.$ \\
    \end{longtable}
    \begin{enumerate}
        \item $\beta_5\neq0,$ then we can suppose $\alpha_5=0$ and choosing 
        $w=-\frac{y\alpha_4}{\alpha_3},$ 
        $z=-\frac{x(\alpha_4\beta_1-\alpha_3\beta_2)}{\alpha_3\beta_5}$, 
        we have $\alpha_4^*=\beta_2^*=0.$ Thus, we can assume $\alpha_4=\beta_2=0$ and consider following subcases:
        \begin{enumerate}
            \item $\alpha_2=\beta_4=\beta_1=0,$ then we have the family of  representatives $ \left\langle \alpha\nabla_1+ \nabla_3,\nabla_5 \right\rangle;$ 
 
 \item $\alpha_2=\beta_4=0,\ \beta_1\neq0,$ then choosing $x=\sqrt[3]{{\beta_5}{\beta_1^{-1}}},\ y=1,$ we have the family of representatives $ \left\langle  \alpha \nabla_1+\nabla_3,\nabla_1+\nabla_5 \right\rangle
 ;$ 
            \item $\alpha_2=0,\ \beta_4\neq0,\ \beta_1=0$ then choosing $x={\beta_5}{\beta_4^{-1}}, \ y=1,$ we have the family of representatives $ \left\langle  \alpha \nabla_1+\nabla_3,\nabla_4+\nabla_5 \right\rangle;$ 
            
            \item $\alpha_2=0,\ \beta_4\neq0,\ \beta_1\neq0,$ then choosing $x={\beta_4^2}{\beta_1^{-1}\beta_5^{-1}},\ y={\beta_4^3}{\beta_1^{-1}\beta_5^{-2}},$ we have the family of representatives $ \left\langle  \alpha \nabla_1+\nabla_3,\nabla_1+\nabla_4+\nabla_5 \right\rangle;$ 
            
            \item $\alpha_2\neq0,\ \beta_4=\beta_1=0,$ then choosing $x=1,\ y={\alpha_3}{\alpha_2^{-1}},$ we have the family of  representatives $ \left\langle  \alpha \nabla_1+ \nabla_2+\nabla_3,\nabla_5\right\rangle;$ 
            
            \item $\alpha_2\neq0,\ \beta_4=0,\ \beta_1\neq0,$ then choosing 
            $x={\alpha_2^2\beta_1}{\alpha_3^{-2}\beta_5^{-1}},$ 
            $y={\alpha_2^3\beta_1^2}{\alpha_3^{-3}\beta_5^{-2}},$ we have the family of representatives $\left\langle  \alpha \nabla_1+ \nabla_2 +\nabla_3,\nabla_1+\nabla_5\right\rangle;$ 

\item $\alpha_2\neq0,\ \beta_4\neq0,$ then choosing $x={\alpha_2\beta_4}{\alpha_3^{-1}\beta_5^{-1}},$ $y={\alpha_2\beta_4^2}{\alpha_3^{-1}\beta_5^{-2}},$ 
we have the family of  representatives $ \left\langle  \alpha \nabla_1+ \nabla_2+ \nabla_3,\beta \nabla_1+ \nabla_4+ \nabla_5\right\rangle;$ 
           \end{enumerate}

\item $\beta_5=0, \beta_4\neq0.$ 
        \begin{enumerate}
            \item $\alpha_5=\beta_1=0, $ $ \alpha_1\beta_4\neq\alpha_3\beta_2,$ then choosing $y=1,\ w=\frac{\alpha_4\beta_2-\alpha_2\beta_4}{\alpha_1\beta_4-\alpha_3\beta_2},$ we have
            the family of representatives $ \left\langle \alpha\nabla_1+ \nabla_3,\beta\nabla_2+\nabla_4 \right\rangle_{\alpha\neq \beta};$ 

\item $\alpha_5=\beta_1=0,$ $\alpha_1\beta_4=\alpha_3\beta_2,$ $\alpha_2\alpha_3=\alpha_1\alpha_4,$ 
then choosing $y=1,\ w=-{\alpha_4}{\alpha_3^{-1}},$ we have the family of representatives $\left\langle \alpha\nabla_1+ \nabla_3,\alpha\nabla_2+\nabla_4 \right\rangle;$ 

            \item $\alpha_5=\beta_1=0,$ $ \alpha_1\beta_4=\alpha_3\beta_2,$ $ \alpha_2\alpha_3\neq\alpha_1\alpha_4$ then choosing 
            $x=\alpha_4 \beta_2-\alpha_2 \beta_4,$
            $y=-\alpha_3 \beta_4 (\alpha_4 \beta_2-\alpha_2 \beta_4),$ and 
            $w=\alpha_4 \beta_4 (\alpha_4 \beta_2-\alpha_2 \beta_4),$ we have the family of  representatives 
            $\left\langle \alpha\nabla_1+\nabla_2+\nabla_3,\alpha\nabla_2+\nabla_4 \right\rangle;$ 
            
            \item $\alpha_5=0,$ $\beta_1\neq0,$ then choosing 
            \begin{center}
                $x=1, \ y=\frac{\alpha_3}{\beta_4},$ 
            $w=\frac{(\alpha_3\beta_2+ \alpha_4 \beta_1-\alpha_1\beta_4)- \sqrt{(\alpha_3\beta_2+ \alpha_4 \beta_1-\alpha_1\beta_4)^2- 4\alpha_3 \beta_1(\alpha_4\beta_2-\alpha_2 \beta_4)}} {\alpha_3\beta_4},$
            \end{center} we have the family of representatives 
            $ \left\langle \alpha\nabla_1+ \nabla_3,\nabla_1+ \beta \nabla_2 +\nabla_4 \right\rangle;$
            
            \item $\alpha_5\neq0,$ $\beta_1=0,$ then choosing 
      \begin{center}      $x=\alpha_5,$
            $y=-\sqrt{\alpha_3}\alpha_5,$
            $z=0$
            and
            $w=-\frac{\sqrt{\alpha_3}\alpha_5(\alpha_4\beta_2-\alpha_2\beta_4)}{\alpha_3\beta_2-\alpha_1\beta_4},$\end{center}  
            we have the family of representatives 
            $\left\langle \alpha\nabla_1+\nabla_3+\nabla_5,\beta \nabla_2 +\nabla_4 \right\rangle;$ 
            
            \item $\alpha_5\neq0,$ $\beta_1\neq0,$ then choosing 
        \begin{center}
        $x=\frac{\alpha_3\beta_4^2}{\alpha_5\beta_1^2}, $ 
            $y=\frac{\alpha_3^2\beta_4^3}{\alpha_5^2\beta_1^3},$ 
            $w=-\frac{\alpha_3^2 \beta_2 \beta_4^3}{\alpha_5^2 \beta_1^4}$
            and
            $z=\frac{\alpha_3 (\alpha_1 \beta_2-\alpha_2 \beta_1) \beta_4^2}{\alpha_5^2 \beta_1^3},$
            \end{center}we have the family of representatives   $\left\langle \alpha\nabla_1+ \nabla_3+ \nabla_5, \nabla_1+\nabla_4\right \rangle;$ 
        \end{enumerate}
        
        \item $\beta_5=0, \beta_4=0,\ \beta_1\neq0,$ then we can suppose $\alpha_1=0$ and consider following subcases:
        \begin{enumerate}
            \item $\alpha_5=0,$ then choosing $w=-\frac{y\beta_2}{\beta_1}$, we have $\beta_2^*=0.$ 
            \begin{enumerate}
                \item if $\alpha_2=\alpha_4=0,$ then we have a   split algebra;
                \item if $\alpha_2=0,$ $ \alpha_4\neq0,$ then choosing $x=1,$ $ y=\frac{\alpha_3}{\alpha_4},$ we have the representative $\left\langle \nabla_3+\nabla_4, \nabla_1\right \rangle;$

\item if $\alpha_2\neq0,$ then choosing $x=1,$ $ y=\frac{\alpha_3}{\alpha_2},$ we have the family of representatives  $\left\langle \nabla_2+\nabla_3+\alpha \nabla_4, \nabla_1\right \rangle;$
            \end{enumerate}

\item $\alpha_5\neq0, $ $ \beta_2=0,$ then choosing 
$x=\alpha_5 ,$
$y=\sqrt{\alpha_3}  \alpha_5,$
$z=-\alpha_2$ and
$w= 0,$
 we have the representative $\left\langle \nabla_3+ \nabla_5, \nabla_1\right \rangle;$

\item $\alpha_5\neq0, $ $ \beta_2\neq0,$ then choosing 
\begin{center}
    $x=\frac{\alpha_3 \beta_2^2}{\alpha_5 \beta_1^2},$
$y=\frac{\alpha_3^2 \beta_2^3}{\alpha_5^2 \beta_1^3},$
$z=\frac{\alpha_3 \beta_2^2 (\alpha_2 \beta_1-\alpha_1 \beta_2)}{\alpha_5^2 \beta_1^3}$ and
$w=0,$
\end{center}
 we have the representative $\left\langle \nabla_3+\nabla_5, \nabla_1+\nabla_2\right \rangle.$

        \end{enumerate}
        \item $\beta_5=\beta_4=\beta_1=0, \beta_2\neq0,$ then we can suppose $\alpha_2=0$ and choosing $w=-\frac{y\alpha_4}{\alpha_3},$ we have $\alpha_4^*=0.$  Thus, we have following subcases:
        \begin{enumerate}

\item if $\alpha_5=0,$ then we have the family of representatives 
$\left\langle \alpha \nabla_1+ \nabla_3, \nabla_2\right \rangle;$
 
 \item if $\alpha_5\neq0,$ then choosing $x=1,\ y=\sqrt{{\alpha_3}{\alpha_5^{-1}}},$ we have the family of representatives $\left\langle \alpha \nabla_1+ \nabla_3+\nabla_5, \nabla_2\right\rangle.$
        \end{enumerate}
    \end{enumerate}

\item $\alpha_3=0,$ $\alpha_4\neq0,$ then we can suppose $\beta_4=0.$
    \begin{longtable}{lcllcl}
    $\alpha^*_1$&$=$&$x^3 \alpha_1,$   & $\beta^*_1$&$=$&$x^3\beta_1,$ \\
    $\alpha^*_2$&$=$&$w x \alpha _1+x y \alpha _2+y z \alpha _5,$ &   $\beta^*_2$&$=$&$w x \beta_1+xy \beta_2+y z \beta_5,$\\ 
    $\alpha^*_3$&$=$&$0,$ &  $\beta^*_3$&$=$&$0,$\\ 
    $\alpha_4^*$&$=$&$x y \alpha_4,$   & $\beta_4^*$&$=$&$0,$ \\
    $\alpha_5^*$&$=$&$y^2\alpha_5,$ & $\beta_5^*$&$=$&$y^2\beta_5.$ \\
    \end{longtable}
    \begin{enumerate}
        \item $\beta_5\neq0,$ then we can suppose $\alpha_5=0$ and choosing $z=-\frac{x(w\beta_1+y\beta_2)}{y\beta_5},$ we have $\beta_2^*=0.$ Thus, we have following subcases:
        \begin{enumerate}
            \item if $\beta_1=0,$ then $\alpha_1\neq0$ and  choosing $x=1,\ y=\frac{\alpha_1}{\alpha_4},\ w=-\frac{\alpha_2}{\alpha_4}$ we have the representative $\left\langle \nabla_1+ \nabla_4, \nabla_5\right\rangle;$
            
            \item if $\beta_1\neq0,$ $ \alpha_1=0$ then choosing $x=1,\ y=\sqrt{\frac{\beta_1} {\beta_5}},$ we have the family of representatives              $\left\langle \alpha \nabla_2+\nabla_4, \nabla_1+\nabla_5\right\rangle;$

\item if $\beta_1\neq0,$ $ \alpha_1\neq0$ then choosing $x=\frac{\alpha_4^2\beta_1} {\alpha_1^2\beta_5},$ 
$ y=\frac{\alpha_2^3 \beta_1^2}{\alpha_1^3\beta_5^2}, $ 
$ w=-\frac{\alpha_2^4\beta_1^2}{\alpha_1^4\beta_5^2},$ we have the representative  
            $\left\langle \nabla_1+ \nabla_4, \nabla_1+\nabla_5\right\rangle.$
        \end{enumerate}
        \item $\beta_5=0,$ $\beta_1\neq0,$ then we can suppose $\alpha_1=0$ and choosing $w=-\frac{y\beta_2}{\beta_1},$ we have $\beta_2^*=0.$ Thus, we have following subcases:
        \begin{enumerate}
            \item if $\alpha_5=0,$ then we have the representative $\left\langle \alpha \nabla_2+ \nabla_4, \nabla_1\right\rangle;$
            \item if $\alpha_5\neq0,$ then choosing $x=1,\ y=\frac{\alpha_4}{\alpha_5},\ z=-\frac{\alpha_2}{\alpha_5}$ we have the representative 
            $\left\langle \nabla_4+\nabla_5, \nabla_1 \right\rangle.$
        \end{enumerate}
    \item $\beta_5=\beta_1=0,$ $ \beta_2\neq0,$ then we can suppose $\alpha_2=0$. Since in case of $\alpha_1 =0,$ we have a split extension, we can assume $\alpha_1  \neq 0,$  
        Thus, we have following subcases:
        \begin{enumerate}
            \item if $\alpha_5=0,$ then choosing $x=1,\ y=\frac{\alpha_1}{\alpha_4},$ we have the representative 
            $\left\langle \nabla_1+\nabla_4, \nabla_2 \right\rangle;$
            \item if $\alpha_5\neq0,$ then choosing $x=\frac{\alpha_4^2}{\alpha_1\alpha_5},\ y=\frac{\alpha_4^3}{\alpha_1\alpha_5},$ we have the representative 
            $\left\langle \nabla_1+\nabla_4+\nabla_5, \nabla_2 \right\rangle.$
        \end{enumerate}
    \end{enumerate}
\end{enumerate}

Now we have the following distinct orbits:
\begin{center}$ \left\langle \alpha\nabla_1+ \nabla_3,\nabla_5 \right\rangle,$ \ 
$ \left\langle  \alpha \nabla_1+\nabla_3,\nabla_1+\nabla_5 \right\rangle,$  \ 
$ \left\langle  \alpha \nabla_1+\nabla_3,\nabla_4+\nabla_5 \right\rangle,$ \
$ \left\langle  \alpha \nabla_1+\nabla_3,\nabla_1+\nabla_4+\nabla_5 \right\rangle,$ \ 
$ \left\langle  \alpha \nabla_1+ \nabla_2+\nabla_3,\nabla_5\right\rangle,$  \ 
$\left\langle  \alpha \nabla_1+ \nabla_2 +\nabla_3,\nabla_1+\nabla_5\right\rangle,$ \
$ \left\langle  \alpha \nabla_1+ \nabla_2+ \nabla_3,\beta \nabla_1+ \nabla_4+ \nabla_5\right\rangle,$ \ 
$ \left\langle \alpha\nabla_1+ \nabla_3,\beta\nabla_2+\nabla_4 \right\rangle,$ \ 
$\left\langle \alpha\nabla_1+\nabla_2+\nabla_3,\alpha\nabla_2+\nabla_4 \right\rangle,$  \ 
$ \left\langle \alpha\nabla_1+ \nabla_3,\nabla_1+ \beta \nabla_2 +\nabla_4 \right\rangle,$ \ 
$\left\langle \alpha\nabla_1+\nabla_3+\nabla_5,\beta \nabla_2 +\nabla_4 \right\rangle,$  \ 
$\left\langle \alpha\nabla_1+ \nabla_3+ \nabla_5, \nabla_1+\nabla_4\right \rangle,$  \ 
$\left\langle \nabla_1, \nabla_3+\nabla_4 \right \rangle,$  \ 
$\left\langle \nabla_1, \nabla_2+\nabla_3+\alpha \nabla_4\right \rangle,$  \ 
$\left\langle \nabla_1, \nabla_3+\nabla_5, \right \rangle,$ \ 
$\left\langle \nabla_1+\nabla_2, \nabla_3+\nabla_5, \right \rangle,$ \ 
$\left\langle \alpha \nabla_1+ \nabla_3, \nabla_2\right \rangle,$  \ 
$\left\langle \alpha \nabla_1+ \nabla_3+\nabla_5, \nabla_2\right\rangle,$ \ 
$\left\langle \nabla_1+ \nabla_4, \nabla_5\right\rangle,$ \  
$\left\langle \nabla_1+\nabla_5, \alpha \nabla_2+\nabla_4\right\rangle,$ \ 
$\left\langle \nabla_1+ \nabla_4, \nabla_1+\nabla_5\right\rangle,$ \ 
$\left\langle \nabla_1, \alpha \nabla_2+ \nabla_4\right\rangle,$ \ 
$\left\langle \nabla_1, \nabla_4+\nabla_5\right\rangle,$ \ 
$\left\langle \nabla_1+\nabla_4, \nabla_2 \right\rangle,$ \ 
$\left\langle \nabla_1+\nabla_4+\nabla_5, \nabla_2 \right\rangle.$
\end{center}

Hence, we have the following new $5$-dimensional nilpotent Novikov algebras (see section \ref{secteoA}):

\begin{center}
${\rm N}_{148}^{\alpha},$ 
${\rm N}_{149}^{\alpha}$
${\rm N}_{150}^{\alpha},$
${\rm N}_{151}^{\alpha},$
${\rm N}_{152}^{\alpha},$
${\rm N}_{153}^{\alpha},$
${\rm N}_{154}^{\alpha, \beta},$
${\rm N}_{155}^{\alpha, \beta},$
${\rm N}_{156}^{\alpha},$
${\rm N}_{157}^{\alpha, \beta},$
${\rm N}_{158}^{\alpha, \beta},$
${\rm N}_{159}^{\alpha}$
${\rm N}_{160},$

${\rm N}_{161}^{\alpha}$
${\rm N}_{162},$
${\rm N}_{163},$
${\rm N}_{164}^{\alpha},$
${\rm N}_{165}^{\alpha},$
${\rm N}_{166},$
${\rm N}_{167}^{\alpha},$
${\rm N}_{168},$
${\rm N}_{169}^{\alpha},$
${\rm N}_{170},$
${\rm N}_{171},$
${\rm N}_{172}.$
\end{center}

\subsubsection{Central extensions of ${\mathcal N}^{3*}_{04}(0)$}

Let us use the following notations:
\[\nabla_1=[\Delta_{11}], \quad  \nabla_2=[\Delta_{13}], \quad \nabla_3=[\Delta_{21}], \quad \nabla_4=[\Delta_{22}], \quad
\nabla_5=[\Delta_{23}-\Delta_{32}].\]

The automorphism group of ${\mathcal N}^{3*}_{04}(0)$ consists of invertible matrices of the form

\[\phi=\left(
                             \begin{array}{ccc}
                               x & 0 & 0   \\
                               0 & y & 0  \\
                               z & t & xy                               \end{array}\right)
                               .\]

Since
\[
\phi^T
                           \left(\begin{array}{ccc}
                                \alpha_1& 0 & \alpha_2  \\
                                 \alpha_3 & \alpha_4 & \alpha_5  \\
                                 0 & -\alpha_5 & 0 \\
                             \end{array}
                           \right)\phi
                           =\left(\begin{array}{ccc}
                                \alpha^*_1 & \alpha^* &  \alpha_2^*   \\
                                 \alpha_3^*   &  \alpha_4^* & \alpha_5^*  \\
                                 0 &  -\alpha_5^* & 0 \\
                             \end{array}\right),\]
the action of $\operatorname{Aut} ({\mathcal N}^{3*}_{04}(0))$ on the subspace
$\langle  \sum\limits_{i=1}^5\alpha_i \nabla_i \rangle$
is given by
$\langle  \sum\limits_{i=1}^5\alpha^*_i \nabla_i \rangle,$ where

\[
\begin{array}{rcl}
\alpha^*_1&=&x(x\alpha_1+z\alpha_2);\\
\alpha^*_2&=&x^2y\alpha_2;\\
\alpha^*_3&=&y(x\alpha_3+z\alpha_5);\\
\alpha^*_4&=&y^2\alpha_4;\\
\alpha^*_5&=&xy^2\alpha_5.
\end{array}\]

We  consider the vector space generated by the following two cocycles:
\begin{center}  $\theta_1=\alpha_1\nabla_1+\alpha_2\nabla_2+\alpha_3\nabla_3+\alpha_4\nabla_4+\alpha_5\nabla_5$  \ \ and \ \  $\theta_2=\beta_1\nabla_1+\beta_3\nabla_3+\beta_4\nabla_4+\beta_5\nabla_5.$
\end{center}
We are interested  only in $(\alpha_2,\alpha_5,\beta_2,\beta_5)\neq 0.$
Hence, we have the following cases.

\begin{enumerate}
\item $\alpha_2\neq0,$ then we can suppose $\beta_2=0$ and have
\begin{longtable}{lcllcl}
$\alpha^*_1$&$=$&$x(x\alpha_1+z\alpha_2),$ &   $\beta^*_1$&$=$&$x^2\beta_1,$ \\
$\alpha^*_2$&$=$&$x^2y\alpha_2,$ &   $\beta^*_2$&$=$&$0,$\\ 
$\alpha^*_3$&$=$&$y(x\alpha_3+z\alpha_5),$ &   $\beta^*_3$&$=$&$y(x\beta_3+z\beta_5),$\\ 
$\alpha_4^*$&$=$&$y^2\alpha_4,$ &   $\beta_4^*$&$=$&$y^2\beta_4,$ \\
$\alpha_5^*$&$=$&$xy^2\alpha_5,$ &   $\beta_5^*$&$=$&$xy^2\beta_5.$ \\
\end{longtable}

\begin{enumerate}
    \item $\beta_5\neq0,$ then we can suppose $\alpha_5^*=0$ and choosing $z=-\frac{x\beta_3}{\beta_5}$, we have $\beta_3^*=0$ and we will suppose that $\beta_3=0.$ 
    Thus, we have following subcases:
    \begin{enumerate}
        \item $\alpha_1=\alpha_3=\alpha_4=\beta_4=\beta_1=0,$ then we have the   representative $\left\langle \nabla_2,\nabla_5 \right\rangle;$
        
        \item $\alpha_1=\alpha_3=\alpha_4=\beta_4=0, \ \beta_1\neq0,$ then choosing 
        $x=\frac{\beta_5}{\beta_1}, \ y=1,$ we have the representative $\left\langle \nabla_2,\nabla_1+\nabla_5 \right\rangle;$
        \item $\alpha_1=\alpha_3=\alpha_4=0, \beta_4\neq0,$ $\beta_1=0,$ then choosing $x=\frac{\beta_4}{\beta_5},$ we have the representative $\left\langle \nabla_2,\nabla_4+\nabla_5 \right\rangle;$

\item $\alpha_1=\alpha_3=\alpha_4=0,\ \beta_4\neq0,\ \beta_1\neq0,$ then choosing $x=\frac{\beta_4}{\beta_5},\ y=\frac{\sqrt{\beta_1 \beta_4}}{\beta_5},$ 
we have the representative $\left\langle \nabla_2,\nabla_1+\nabla_4+\nabla_5 \right\rangle;$
        \item $\alpha_1=\alpha_3=0,\ \alpha_4\neq0,\ \beta_4=\beta_1=0,$ then choosing $x=1,\ y=\frac{\alpha_2}{\alpha_4},$ we have the representative $\left\langle \nabla_2+\nabla_4,\nabla_5 \right\rangle;$

\item $\alpha_1=\alpha_3=0,\ \alpha_4\neq0,\ \beta_4=0,\ \beta_1\neq0,$ then choosing 
        $x=\sqrt[3]{\frac{\alpha_4^2\beta_1}{\alpha_2^2\beta_5}},$
        $y=\sqrt[3]{\frac{\alpha_4\beta_1^2}{\alpha_2\beta_5^2}},$
         we have the representative $\left\langle \nabla_2+\nabla_4,\nabla_1+\nabla_5 \right\rangle;$
        
        \item $\alpha_1=\alpha_3=0,\ \alpha_4\neq0,\ \beta_4\neq0,$ then choosing 
        $x=\frac{\beta_4}{\beta_5},\ y=\frac{\alpha_2\beta_4^2}{\alpha_4\beta_5^2},$
        we have the family of representatives $\left\langle \nabla_2+\nabla_4,\alpha\nabla_1+ \nabla_4+\nabla_5 \right\rangle;$
        
        \item $\alpha_1=0,\ \alpha_3\neq0,\ \alpha_4=\beta_1=0,$ then choosing $x=\frac{\alpha_3}{\alpha_2},$ we have the family of representatives $\left\langle \nabla_2+\nabla_3,\alpha \nabla_4+\nabla_5 \right\rangle;$
        
        \item $\alpha_1=0,\ \alpha_3\neq0,\ \alpha_4=0,\ \beta_1\neq0,$ then choosing $x=\frac{\alpha_3}{\alpha_2},$ $ y=\sqrt{\frac{\alpha_3\beta_1}{\alpha_2\beta_5}},$ we have the family of representatives $\left\langle \nabla_2+\nabla_3,\nabla_1+\alpha \nabla_4+\nabla_5 \right\rangle;$

\item $\alpha_1=0,\ \alpha_3\neq0,\ \alpha_4\neq0,$ then choosing $x=\frac{\alpha_3}{\alpha_2},$ $ y=\frac{\alpha_3^2}{\alpha_2\alpha_4},$ we have the family of representatives $\left\langle \nabla_2+\nabla_3+\nabla_4,\alpha \nabla_1+\beta \nabla_4+\nabla_5 \right\rangle;$
        \item $\alpha_1\neq0,\ \alpha_3=\alpha_4=\beta_1=\beta_4=0,$ then choosing $y=\frac{\alpha_1}{\alpha_2},$ we have the representative $\left\langle \nabla_1+\nabla_2,\nabla_5 \right\rangle;$

\item $\alpha_1\neq0,\ \alpha_3=\alpha_4=\beta_1=0,\ \beta_4\neq0,$ then choosing $x=\frac{\beta_4}{\beta_5},$ $y=\frac{\alpha_1}{\alpha_2}$ we have the family of representatives $\left\langle \nabla_1+\nabla_2,\nabla_4+\nabla_5\right\rangle;$

        \item $\alpha_1\neq0,\ \alpha_3=\alpha_4=0,\ \beta_1\neq0,$ then choosing $x=\frac{\alpha_1^2\beta_5}{\alpha_2^2\beta_1},\ y=\frac{\alpha_1}{\alpha_2},$ we have the family of representatives $\left\langle \nabla_1+\nabla_2,\nabla_1+\alpha \nabla_4+\nabla_5 \right\rangle;$
        
        \item $\alpha_1\neq0,\ \alpha_3=0,\ \alpha_4\neq0,$ then choosing $x=\frac{\sqrt{\alpha_1\alpha_4}}{\alpha_2},\ y=\frac{\alpha_1}{\alpha_2},$ we have the family of representatives $\left\langle \nabla_1+\nabla_2+\nabla_4,\alpha \nabla_1+\beta \nabla_4+\nabla_5 \right\rangle;$
        \item $\alpha_1\neq0,\ \alpha_3\neq0,$ then choosing $x=\frac{\alpha_3}{\alpha_2},$ $y=\frac{\alpha_1}{\alpha_2},$ we have the family of representatives 
$\left\langle\nabla_1+\nabla_2+\nabla_3+\alpha\nabla_4,\beta\nabla_1+\gamma\nabla_4+\nabla_5\right\rangle.$
    \end{enumerate}

    \item $\beta_5=0, \  \beta_4\neq0,$ then choosing $z=\frac{x(\alpha_4\beta_1-\alpha_1\beta_4)}{\alpha_2\beta_4},$ we can suppose $\alpha_1^*=\alpha_4^*=0$ and  have following subcases: 
    \begin{enumerate}
        \item $\alpha_3=\alpha_5=\beta_3=\beta_1=0,$ then we have the representative $\left\langle \nabla_2,\nabla_4 \right\rangle;$
        \item $\alpha_3=\alpha_5=\beta_3=0,$ $\beta_1\neq0,$ then choosing $x=1, \ y=\sqrt{\frac{\beta_1}{\beta_4}},$ we have the representative $\left\langle \nabla_2,\nabla_1+\nabla_4 \right\rangle;$

\item $\alpha_3=\alpha_5=0,\ \beta_3\neq0,$ then choosing $x=1, \ y=\frac{\beta_3}{\beta_4},$ we have the family of representatives $\left\langle \nabla_2,\alpha \nabla_1+ \nabla_3+\nabla_4 \right\rangle;$

        \item $\alpha_3=0,\ \alpha_5\neq0,$ then choosing $x=\frac{\alpha_5}{\alpha_2},$ we have the family of representatives 
        $\left\langle \nabla_2+\nabla_5,\alpha \nabla_1+\beta \nabla_3+\nabla_4 \right\rangle;$
        \item $\alpha_3\neq0,\ \alpha_5=\beta_3=\beta_1=0,$ then choosing $x=\frac{\alpha_3}{\alpha_2},$ we have the representative $\left\langle \nabla_2+\nabla_3,\nabla_4 \right\rangle;$
        \item $\alpha_3\neq0,\ \alpha_5=\beta_3=0,\ \beta_1\neq0,$ then choosing $x=\frac{\alpha_3}{\alpha_2}, \ y=\frac{\alpha_3\sqrt{\beta_1}}{\alpha_2\sqrt{\beta_4}},$ we have the representative $\left\langle \nabla_2+\nabla_3,\nabla_1+\nabla_4 \right\rangle;$
        \item $\alpha_3\neq0,\ \alpha_5=0,\ \beta_3\neq0,$ then choosing $x=\frac{\alpha_3}{\alpha_2}, \ y=\frac{\alpha_3\beta_3}{\alpha_2\beta_4},$
         we have the family of representatives $\left\langle \nabla_2+\nabla_3,\alpha \nabla_1+ \nabla_3+\nabla_4 \right\rangle;$
         
        \item $\alpha_3\neq0,\ \alpha_5\neq0,$ then choosing $x=\frac{\alpha_3}{\alpha_2}, \ y=\frac{\alpha_3}{\alpha_5},$ we have the family of representatives 
        $\left\langle \nabla_2+\nabla_3+\nabla_5,\alpha \nabla_1+\beta \nabla_3+\nabla_4\right\rangle.$
    \end{enumerate}
    
    \item $\beta_5=0, \ \beta_4=0,\ \beta_3\neq0.$ 
        \begin{enumerate}
        \item $\alpha_5=\beta_1=\alpha_4=0,$ then we can suppose $\alpha_3^*=0$ and choosing $x=1,\ z=-\frac{\alpha_1}{\alpha_2},$ we have the representative $\left\langle \nabla_2,\nabla_3 \right\rangle;$
        \item $\alpha_5=\beta_1=0,\ \alpha_4\neq0,$ then choosing $x=1,\ y=\frac{\alpha_2}{\alpha_4},\ z=-\frac{\alpha_1}{\alpha_2},$ we have the representative $\left\langle \nabla_2+\nabla_4, \nabla_3 \right\rangle;$
        \item $\alpha_5=0,\ \beta_1\neq0,\ \alpha_4=0$ then choosing $x=1,\ y=\frac{\beta_1}{\beta_3},\  z=\frac{\alpha_3\beta_1-\alpha_1\beta_3}{\alpha_2\beta_3},$ we have the representative $\left\langle \nabla_2,\nabla_1+\nabla_3 \right\rangle;$
        \item $\alpha_5=0,\ \beta_1\neq0,\ \alpha_4\neq0,$ then choosing $x=\frac{\alpha_4\beta_1}{\alpha_2\beta_3},\ y=\frac{\alpha_4\beta_1^2}{\alpha_2\beta_3^2},\ z=\frac{\alpha_4\beta_1(\alpha_3\beta_1-\alpha_1\beta_3)}{\alpha_2^2\beta_3^2},$ we have the representative $ \left\langle \nabla_2+\nabla_4, \nabla_1+\nabla_3 \right\rangle;$

\item $\alpha_5\neq0,\ \alpha_2\beta_3\neq\alpha_5\beta_1, \ \alpha_4=0,$ then choosing $x=1,$ 
$y=\frac{\alpha_2}{\alpha_5},$ 
$z=\frac{\alpha_3\beta_1-\alpha_1\beta_3}{\alpha_2\beta_3-\alpha_5\beta_1},$ we have the family of  representatives$ \left\langle \nabla_2+\nabla_5,\alpha \nabla_1+\nabla_3 \right\rangle_{\alpha\neq1};$

        \item $\alpha_5\neq0, \ \alpha_2\beta_3\neq\alpha_5\beta_1,\ \alpha_4\neq0,$ then choosing 
        $x=\frac{\alpha_4}{\alpha_5},$ 
        $y=\frac{\alpha_2\alpha_4}{\alpha_5^2},$ 
        $z=\frac{\alpha_4(\alpha_3\beta_1-\alpha_1\beta_3)}{\alpha_5(\alpha_2\beta_3-\alpha_5\beta_1)},$ we have the family of  representatives $\left\langle \nabla_2+\nabla_4+\nabla_5, \alpha\nabla_1+\nabla_3 \right\rangle_{\alpha\neq1
        };$
        
        \item $\alpha_5\neq0,\ \alpha_2\beta_3=\alpha_5\beta_1,\ \alpha_2\alpha_3=\alpha_1\alpha_5,\ \alpha_4=0,$ then choosing $x=1,$ 
        $y=\frac{\alpha_2}{\alpha_5},$ 
        $z=-\frac{\alpha_1}{\alpha_2},$ we have the representative $ \left\langle \nabla_2+\nabla_5, \nabla_1+\nabla_3 \right\rangle;$
        
        \item $\alpha_5\neq0,\ \alpha_2\beta_3=\alpha_5\beta_1,\ \alpha_2\alpha_3=\alpha_1\alpha_5,\ \alpha_4\neq0,$ then choosing 
        $x=\frac{\alpha_4}{\alpha_5},$ 
        $y=\frac{\alpha_2\alpha_4}{\alpha_5^2},$ 
        $ z=-\frac{\alpha_1\alpha_4}{\alpha_2\alpha_5},$ we have the representative $ \left\langle \nabla_2+\nabla_4+\nabla_5, \nabla_1+\nabla_3 \right\rangle;$
 
 \item $\alpha_5\neq0,\ \alpha_2\beta_3=\alpha_5\beta_1,\ \alpha_2\alpha_3\neq\alpha_1\alpha_5,$ then choosing $x=\frac{\alpha_2\alpha_3-\alpha_1\alpha_5}{\alpha_2^2},$ 
 $ y=\frac{\alpha_2\alpha_3-\alpha_1\alpha_5}{\alpha_2\alpha_5},$ 
 $ z=-\frac{\alpha_1(\alpha_2\alpha_3-\alpha_1\alpha_5)}{\alpha_2^3},$ we have the family of  representatives $ \left\langle \nabla_2+\nabla_3+\alpha \nabla_4+\nabla_5, \nabla_1+\nabla_3 \right\rangle.$

    \end{enumerate}

 \item $\beta_5=0, \beta_4=0,\ \beta_3=0,\ \beta_1\neq0,$ then we can suppose $\alpha_1^*=0$ and consider following subcases: 
        \begin{enumerate}
        \item $\alpha_5=\alpha_4=\alpha_3=0,$ then we have the representative $ \left\langle \nabla_2,\nabla_1 \right\rangle;$
        \item $\alpha_5=\alpha_4=0, \ \alpha_3\neq0,$ then choosing $x=\frac{\alpha_3}{\alpha_2},\ y=1,$ we have the representative $ \left\langle \nabla_2+\nabla_3,\nabla_1 \right\rangle;$
        \item $\alpha_5=0,\ \alpha_4\neq0, \ \alpha_3=0,$ then choosing $x=1,\ y=\frac{\alpha_2}{\alpha_4},$ we have the representative $ \left\langle \nabla_2+\nabla_4,\nabla_1 \right\rangle;$
        \item $\alpha_5=0,\ \alpha_4\neq0, \ \alpha_3\neq0,$ then choosing $x=\frac{\alpha_3}{\alpha_2},\ y=\frac{\alpha_3^2}{\alpha_2\alpha_4},$ we have the representative $ \left\langle \nabla_2+\nabla_3+\nabla_4,\nabla_1 \right\rangle;$
        \item $\alpha_5\neq0,\ \alpha_4=0,$ then choosing $x=1,\ y=\frac{\alpha_2}{\alpha_5},\ z=-\frac{\alpha_3}{\alpha_5},$ we have the representative $ \left\langle \nabla_2+\nabla_5,\nabla_1 \right\rangle;$
        \item $\alpha_5\neq0,\ \alpha_4\neq0,$ then choosing $x=\frac{\alpha_4}{\alpha_5},\ y=\frac{\alpha_2\alpha_4}{\alpha_5^2},\ z=-\frac{\alpha_3\alpha_4}{\alpha_5^2},$ we have the representative $ \left\langle \nabla_2+\nabla_4+\nabla_5,\nabla_1 \right\rangle.$
    \end{enumerate}

\end{enumerate}

\item $\alpha_2=0,$ then $\alpha_5\neq0$ and we have
\begin{longtable}{lcllcl}
$\alpha^*_1$&$=$&$x^2\alpha_1,$ & $\beta^*_1$&$=$&$x^2\beta_1,$ \\
$\alpha^*_2$&$=$&$0,$ & $\beta^*_2$&$=$&$0,$\\ 
$\alpha^*_3$&$=$&$y(x\alpha_3+z\alpha_5),$ & $\beta^*_3$&$=$&$xy\beta_3,$\\ 
$\alpha_4^*$&$=$&$y^2\alpha_4,$ & $\beta_4^*$&$=$&$y^2\beta_4,$ \\
$\alpha_5^*$&$=$&$xy^2\alpha_5,$ & $\beta_5^*$&$=$&$0.$ \\
\end{longtable}

\begin{enumerate}
    \item $\beta_4\neq0,$ then we can suppose $\alpha_4^*=0$ and consider following subcases: 
    \begin{enumerate}
        \item $\alpha_1=\beta_3=\beta_1=0,$ then choosing $x=1,\  z=-\frac{\alpha_3}{\alpha_5},$ we have the representative $ \left\langle \nabla_5,\nabla_4 \right\rangle;$
        \item $\alpha_1=\beta_3=0,\ \beta_1\neq0,$ then choosing $x=1,\  y=\sqrt{\frac{\beta_1}{\beta_4}},\ z=-\frac{\alpha_3}{\alpha_5},$ we have the representative $ \left\langle \nabla_5,\nabla_1+\nabla_4 \right\rangle;$

\item $\alpha_1=0,\ \beta_3\neq0,$ then choosing $x=1,\  y=\frac{\beta_3}{\beta_4},\ z=-\frac{\alpha_3}{\alpha_5},$ we have the family of representatives $ \left\langle \nabla_5,\alpha\nabla_1+\nabla_3+\nabla_4 \right\rangle;$

        \item $\alpha_1\neq0,\ \beta_1=\beta_3=0,$ then choosing $x=\alpha_1\alpha_5,\  y=\alpha_1,\ z=-\alpha_1\alpha_3,$ we have the family of representatives $ \left\langle \nabla_1+\nabla_5,\nabla_4 \right\rangle;$
                \item $\alpha_1\neq0 ,\ \beta_3=0,\ \beta_1\neq0,$ then choosing $x=\frac{\alpha_1\beta_4}{\alpha_5\beta_1},\ y=\frac{\alpha_1\sqrt{\beta_4}}{\alpha_5\sqrt{\beta_1}},\ z=-\frac{\alpha_1\alpha_3\beta_4}{\alpha_5^2\beta_1},$ we have the representative $ \left\langle \nabla_1+\nabla_5,\nabla_1+\nabla_4 \right\rangle;$
        \item $\alpha_1\neq0,\ \beta_3\neq0,$ then choosing $x=\frac{\alpha_1\beta_4^2}{\alpha_5\beta_3^2},\  y=\frac{\alpha_1\beta_4}{\alpha_5\beta_3},\ z=-\frac{\alpha_1\alpha_3\beta_4^2}{\alpha_5^2\beta_3},$ we have the family of representatives 
        $ \left\langle \nabla_1+\nabla_5,\alpha \nabla_1+ \nabla_3+\nabla_4 \right\rangle.$

    \end{enumerate}
    \item $\beta_4=0,\ \beta_3\neq0.$ 
    \begin{enumerate}
        \item $\alpha_4=\beta_1=\alpha_1=0,$ then choosing $x=1,\  z=-\frac{\alpha_3}{\alpha_5},$ we have the representative $ \left\langle \nabla_5,\nabla_3 \right\rangle;$
        \item $\alpha_4=\beta_1=0,\ \alpha_1\neq0,$ then choosing $x=\frac{\alpha_5}{\alpha_1},\ y=1,\ z=-\frac{\alpha_3}{\alpha_1},$ we have the representative $ \left\langle \nabla_5+\nabla_1,\nabla_3 \right\rangle;$
        \item $\alpha_4=0,\ \beta_1\neq0,$ then choosing $x=1,\ y=\frac{\beta_1}{\beta_3},\ z=\frac{\alpha_1\beta_3-\alpha_3\beta_1}{\alpha_5\beta_1},$ we have the representative $ \left\langle \nabla_5,\nabla_1+\nabla_3 \right\rangle;$
        \item $\alpha_4\neq0,\ \beta_1=\alpha_1=0,$ then choosing $x=\frac{\alpha_4}{\alpha_5},\ z=-\frac{\alpha_3\alpha_4}{\alpha_5^2},$ we have the representative $ \left\langle \nabla_4+\nabla_5,\nabla_3 \right\rangle;$
        \item $\alpha_4\neq0,\ \beta_1=0,\ \alpha_1\neq0,$ then choosing $x=\frac{\alpha_4}{\alpha_5},\ y=\frac{\sqrt{\alpha_1\alpha_4}}{\alpha_5},\  z=-\frac{\alpha_3\alpha_4}{\alpha_5^2},$ we have the representative $ \left\langle \nabla_1+\nabla_4+\nabla_5,\nabla_3 \right\rangle;$
        \item $\alpha_4\neq0,\ \beta_1\neq0,$ then choosing $x=\frac{\alpha_4}{\alpha_5},\ y=\frac{\alpha_4\beta_1}{\alpha_5\beta_3},\  z=\frac{\alpha_4(\alpha_1\beta_3-\alpha_3\beta_1)}{\alpha_5^2\beta_1},$ we have the representative $ \left\langle \nabla_4+\nabla_5,\nabla_1+\nabla_3 \right\rangle.$
    \end{enumerate}
     \item $\beta_4=\beta_3=0,\ \beta_1\neq 0,$ then we can suppose $\alpha_1^*=0$ and choosing $z=-\frac{x\alpha_3}{\alpha_5},$ obtain $\alpha_3^*=0.$ 
    \begin{enumerate}
        \item $\alpha_4=0,$ then we have the representative $ \left\langle \nabla_5,\nabla_1 \right\rangle;$
        \item $\alpha_4\neq0,$ then choosing $x=\frac{\alpha_4}{\alpha_5},$ we have the representative $ \left\langle \nabla_4+\nabla_5,\nabla_1 \right\rangle.$
    \end{enumerate}
\end{enumerate}
\end{enumerate}

Now we have the following distinct orbits:
\begin{center}
$\left\langle \nabla_2,\nabla_5 \right\rangle,$ 
$\left\langle \nabla_2,\nabla_1+\nabla_5 \right\rangle,$ 
$\left\langle \nabla_2,\nabla_4+\nabla_5 \right\rangle,$ 
$\left\langle \nabla_2,\nabla_1+\nabla_4+\nabla_5 \right\rangle,$ 
$\left\langle \nabla_2+\nabla_4,\nabla_5 \right\rangle,$ 
$\left\langle \nabla_2+\nabla_4,\nabla_1+\nabla_5 \right\rangle,$ 
$\left\langle \nabla_2+\nabla_4,\alpha\nabla_1+ \nabla_4+\nabla_5 \right\rangle,$ 
$\left\langle \nabla_2+\nabla_3,\alpha \nabla_4+\nabla_5 \right\rangle,$ 
$\left\langle \nabla_2+\nabla_3,\nabla_1+\alpha \nabla_4+\nabla_5 \right\rangle,$ 
$\left\langle \nabla_2+\nabla_3+\nabla_4,\alpha \nabla_1+\beta \nabla_4+\nabla_5 \right\rangle,$ 
$\left\langle \nabla_1+\nabla_2,\nabla_5 \right\rangle,$ 
$\left\langle \nabla_1+\nabla_2,\nabla_4+\nabla_5\right\rangle,$ 
$\left\langle \nabla_1+\nabla_2,\nabla_1+\alpha \nabla_4+\nabla_5 \right\rangle,$ 
$\left\langle  \nabla_1+\nabla_2+\nabla_4,  \alpha \nabla_1+\beta \nabla_4+\nabla_5 \right\rangle,$ 
$\left\langle  \nabla_1+\nabla_2+\nabla_3+\alpha\nabla_4, \beta\nabla_1+\gamma\nabla_4+\nabla_5  \right\rangle,$
$\left\langle \nabla_2,\nabla_4 \right\rangle,$ 
$\left\langle \nabla_2,\nabla_1+\nabla_4 \right\rangle,$ 
$\left\langle \nabla_2,\alpha \nabla_1+ \nabla_3+\nabla_4 \right\rangle,$ 
$\left\langle \nabla_2+\nabla_5,\alpha \nabla_1+\beta \nabla_3+\nabla_4 \right\rangle,$
$\left\langle \nabla_2+\nabla_3,\nabla_4 \right\rangle,$ 
$\left\langle \nabla_2+\nabla_3,\nabla_1+\nabla_4 \right\rangle,$ 
$\left\langle \nabla_2+\nabla_3,\alpha \nabla_1+ \nabla_3+\nabla_4 \right\rangle,$ 
$\left\langle \nabla_2+\nabla_3+\nabla_5,  \alpha \nabla_1+\beta \nabla_3+\nabla_4  \right\rangle,$ 
$\left\langle \nabla_2,\nabla_3 \right\rangle,$ 
$\left\langle \nabla_2+\nabla_4, \nabla_3 \right\rangle,$ 
$\left\langle \nabla_2,\nabla_1+\nabla_3 \right\rangle,$ 
$ \left\langle \nabla_2+\nabla_4, \nabla_1+\nabla_3 \right\rangle$ 
$ \left\langle \nabla_2+\nabla_5,\alpha \nabla_1+\nabla_3 \right\rangle,$ 
$\left\langle  \nabla_2+\nabla_4+\nabla_5,   \alpha\nabla_1+\nabla_3  \right\rangle,$ 
 $ \left\langle  \nabla_2+\nabla_3+\alpha \nabla_4+\nabla_5,   \nabla_1+\nabla_3  \right\rangle,$ 
$ \left\langle \nabla_2,\nabla_1 \right\rangle,$ 
$ \left\langle \nabla_2+\nabla_3,\nabla_1 \right\rangle,$ 
$ \left\langle \nabla_2+\nabla_4,\nabla_1 \right\rangle,$ 
$ \left\langle \nabla_2+\nabla_3+\nabla_4,\nabla_1 \right\rangle,$ 
$ \left\langle \nabla_2+\nabla_5,\nabla_1 \right\rangle,$ 
$ \left\langle \nabla_2+\nabla_4+\nabla_5,\nabla_1 \right\rangle,$ 
$ \left\langle \nabla_5,\nabla_4 \right\rangle,$ 
$ \left\langle \nabla_5,\nabla_1+\nabla_4 \right\rangle,$ 
$ \left\langle \nabla_5,\alpha\nabla_1+\nabla_3+\nabla_4 \right\rangle,$ 
$ \left\langle \nabla_1+\nabla_5,\nabla_4 \right\rangle,$ 
$ \left\langle \nabla_1+\nabla_5,\nabla_1+\nabla_4 \right\rangle,$ 
$ \left\langle \nabla_1+\nabla_5,\alpha \nabla_1+ \nabla_3+\nabla_4 \right\rangle,$ 
$ \left\langle \nabla_5,\nabla_3 \right\rangle,$ 
$ \left\langle \nabla_5+\nabla_1,\nabla_3 \right\rangle,$ 
$ \left\langle \nabla_5,\nabla_1+\nabla_3 \right\rangle,$ 
$ \left\langle \nabla_4+\nabla_5,\nabla_3 \right\rangle,$ 
$ \left\langle \nabla_1+\nabla_4+\nabla_5,\nabla_3 \right\rangle,$ 
$ \left\langle \nabla_4+\nabla_5,\nabla_1+\nabla_3 \right\rangle,$ 
$ \left\langle \nabla_5,\nabla_1 \right\rangle,$ 
$ \left\langle \nabla_4+\nabla_5,\nabla_1 \right\rangle.$ 
\end{center}

Hence, we have the following new $5$-dimensional nilpotent Novikov algebras (see section \ref{secteoA}):

\begin{center}
 ${\rm N}_{173},$ 
${\rm N}_{174},$ 
${\rm N}_{175},$ 
${\rm N}_{176},$ 
${\rm N}_{177},$
${\rm N}_{178},$ 
${\rm N}_{179}^{\alpha},$
${\rm N}_{180}^{\alpha},$
${\rm N}_{181}^{\alpha},$
${\rm N}_{182}^{\alpha, \beta},$ 
${\rm N}_{183},$
${\rm N}_{184},$
${\rm N}_{185}^{\alpha},$
${\rm N}_{186}^{\alpha, \beta},$
${\rm N}_{187}^{\alpha, \beta, \gamma},$
${\rm N}_{188},$ 
${\rm N}_{189},$ 
${\rm N}_{62}^{0,0},$  ${\rm N}_{64}^{\alpha \notin \{0, 1\}, 0},$
${\rm N}_{190},$
${\rm N}_{191}^{\alpha, \beta},$ 
${\rm N}_{192},$
${\rm N}_{193},$
${\rm N}_{194}^{\alpha},$
${\rm N}_{195}^{\alpha, \beta},$
${\rm N}_{89}^{0}$
${\rm N}_{90}^{0}$
${\rm N}_{196},$ 
${\rm N}_{197}^{\alpha},$
${\rm N}_{198}^{\alpha},$ 
${\rm N}_{199}^{\alpha},$
${\rm N}_{200},$ 
${\rm N}_{201},$ 
${\rm N}_{202},$ 
${\rm N}_{203},$ 
${\rm N}_{204},$ 
${\rm N}_{205},$ 
${\rm N}_{206},$
${\rm N}_{207},$ 
${\rm N}_{208}^{\alpha},$ 
${\rm N}_{114}^{0},$ 
${\rm N}_{209},$ 
${\rm N}_{210}^{\alpha},$ 
${\rm N}_{211},$ 
${\rm N}_{212},$
${\rm N}_{213},$ 
${\rm N}_{214},$ 
${\rm N}_{215},$ 
${\rm N}_{216},$
${\rm N}_{217},$
${\rm N}_{218}.$

\end{center}

It should be noted that the family of orbits $\left\langle \nabla_2,\alpha \nabla_1+\nabla_3 +\nabla_4\right\rangle,$ gives the algebra ${\rm N}_{62}^{0,0}$ in case of $\alpha =0,$ the parametric family  ${\rm N}_{64}^{\frac {\alpha-1} {\alpha},0}$ in case of $\alpha \notin \{ 0, 1\}$ and the algebra ${\rm N}_{190}$ in case of $\alpha =1.$ 




\section{Classification theorem for $5$-dimensional nilpotent Novikov algebras}\label{secteoA}
The algebraic classification of complex $5$-dimensional nilpotent Novikov algebras consists of two parts:
\begin{enumerate}
    \item $5$-dimensional algebras with identity $xyz=0$ (also known as $2$-step nilpotent algebras) are the intersection of all varieties of algebras defined by a family of polynomial identities   of degree three or more; for example, it is in the intersection of associative, Zinbiel, Leibniz, etc, algebras. All these algebras can be obtained as central extensions of zero-product algebras. The geometric classification of $2$-step nilpotent algebras is given in \cite{ikp20}. It is the reason why we are not interested in it.
    
     \item $5$-dimensional nilpotent (non-$2$-step nilpotent) Novikov algebras, which are central extensions of  nilpotent Novikov algebras with nonzero products of a smaller dimension. These algebras are classified by several steps:

    \begin{enumerate}
        \item complex split   $5$-dimensional nilpotent Novikov  algebras are classified in  \cite{kkk19};

        \item complex non-split   $5$-dimensional nilpotent commutative associative algebras  are listed in \cite{krs20};
                
        \item  complex  one-generated $5$-dimensional nilpotent Novikov algebras  are classified in \cite{ckkk20};
               
        \item complex non-split   non-one-generated   $5$-dimensional nilpotent non-commutative Novikov algebras  are classified in Theorem A (see below).
            \end{enumerate}
  \end{enumerate}

\begin{theoremA}
Let ${\mathbb N}$ be a complex non-split  non-one-generated $5$-dimensional  nilpotent (non-$2$-step nilpotent) non-commutative  Novikov algebra.
Then ${\mathbb N}$ is isomorphic to one algebra from the following list:

{\tiny 
 
\begin{longtable}{ll llll}
${\rm N}_{01}$ & $: $ & $e_1e_1=e_2$  & $e_1e_4=e_5$ & $e_2e_1=e_5$ \\&& $e_3e_3=e_5$ & $e_4e_1=e_5$ \\
${\rm N}_{02}$ & $: $ & $e_1e_1=e_2$ & $e_2e_1=e_5$ & $e_3e_4=e_5$ & $e_4e_3=-e_5$ \\
${\rm N}_{03}$ & $: $ & $e_1e_1=e_2$ & $e_1e_3=e_5$ & $e_2e_1=e_5$ \\
&& $e_3e_1=e_5$  & $e_4e_3=e_5$ & $e_4e_4=e_5$ \\
${\rm N}_{04}^{\alpha}$ & $: $ & $e_1e_1=e_2$ & $e_1e_2=e_5$ & $e_2e_1=\alpha e_5$ \\&& $e_3e_3=e_5$ & $e_4e_1=e_5$ \\
${\rm N}_{05}$ & $: $ & $e_1e_1=e_2$ & $e_1e_2=e_5$ & $e_2e_1= e_5$ \\
&& $e_3e_1=e_5$ & $e_3e_3=e_5$ & $e_4e_4=e_5$ \\
${\rm N}_{06}$ & $: $ & $e_1e_1=e_2$ & $e_1e_2=e_5$ & $e_2e_1= e_5$ \\&& $e_3e_1=e_5$ & $e_3e_4=e_5$ & $e_4e_3=e_5$\\
${\rm N}_{07}^{\alpha}$ & $: $ & $e_1e_1=  e_2$ & $e_1e_2= \alpha e_5$ & $e_2e_1=(\alpha+1)e_5$ \\&& $e_3e_3= e_5$ & $e_4e_4=e_5$\\
${\rm N}_{08}^{\alpha}$ & $: $ & $e_1e_1=  e_2$ & $e_1e_2=  e_5$ & $e_2e_1=\alpha e_5$ \\&& $e_3e_4= e_5$ & $e_4e_3=-e_5$\\ 
${\rm N}_{09}$ & $: $ & $e_1e_1=  e_2$ & $e_1e_2=  e_5$ & $e_2e_1= - e_5$ \\&& $e_3e_1= e_5$  & $e_3e_4= e_5$ & $e_4e_3=-e_5$\\
${\rm N}_{10}^{\alpha, \beta}$ & $: $ & $e_1e_1=  e_2$ & $e_1e_2=  \alpha e_5$ & $e_2e_1= (\alpha+1) e_5$ \\&& $e_3e_3=\beta e_5$  & $e_4e_3= e_5$ & $e_4e_4=e_5$\\

${\rm N}_{11}^{\alpha}$ & $: $ & $e_1e_1=  e_2$ & $e_1e_2=  \frac{-1 + \sqrt{1-4\alpha}}{2} e_5$ &  $e_3e_3= \alpha e_5$  & $e_4e_3= e_5$ \\
& & $e_4e_4=e_5$ & $e_2e_1= \frac{1 + \sqrt{1-4\alpha}}{2} e_5$ & $e_3e_1=e_5$ &\\

${\rm N}_{12}^{\alpha\neq \frac{1}{4}}$ & $: $ & $e_1e_1=  e_2$ & $e_1e_2=  \frac{-1 - \sqrt{1-4\alpha}}{2} e_5$ &  $e_3e_3= \alpha e_5$  & $e_4e_3= e_5$ \\
& & $e_4e_4=e_5$ & $e_2e_1= \frac{1 - \sqrt{1-4\alpha}}{2} e_5$ & $e_3e_1=e_5$ \\

${\rm N}_{13}$ & $: $ & $e_1e_1=e_3$  & $e_1e_3=e_5$ &  $e_2e_1=e_5$ &$e_2e_2=e_4$  \\ 
& & $e_2e_4=e_5$& $e_2e_4=e_5$ & $e_3e_1=e_5$ \\
${\rm N}_{14}^{\alpha}$ & $: $ & $e_1e_1=e_3$  & $e_1e_2=e_5$ &  $e_1e_3=\alpha e_5$ &$e_2e_1=e_5$  \\ & & $e_2e_2=e_4$  & $e_2e_4=-(1+\alpha)e_5$ & $e_3e_1=(1+\alpha)e_5$ & $e_4e_2=-\alpha e_5$\\
${\rm N}_{15}^{\alpha}$ & $: $ & $e_1e_1=e_3$  & $e_1e_3=\alpha e_5$ & $e_2e_2=e_4$  \\ & & $e_2e_4=e_5$   & $e_3e_1=(1+\alpha)e_5$ & $e_4e_2=e_5$\\
${\rm N}_{16}^{\alpha, \beta}$ & $:$ & $e_1e_1=e_3$  & $e_1e_3=\alpha e_5$ & $e_2e_2=e_4$  \\ & & $e_2e_4=\beta e_5$   & $e_3e_1=(1+\alpha)e_5$ & $e_4e_2=(1+\beta)e_5$\\

${\rm N}_{17}$ & $: $ & $e_1e_1=e_3$  & $e_1e_2=e_3$ &  $e_1e_3=e_5$ &$e_4e_4=e_5$\\
${\rm N}_{18}$ & $: $ & $e_1e_1=e_3$  & $e_1e_2=e_3$ &  $e_1e_3=e_5$ \\&&$e_2e_4=e_5$ &$e_4e_4=e_5$\\
${\rm N}_{19}$ & $: $ & $e_1e_1=e_3$  & $e_1e_2=e_3$ &  $e_1e_3=e_5$ \\&&$e_2e_1=e_5$ &$e_4e_4=e_5$\\
${\rm N}_{20}$ & $: $ & $e_1e_1=e_3$  & $e_1e_2=e_3$ &  $e_1e_3=e_5$ \\&&$e_2e_1=e_5$ & $e_2e_4=e_5$ &$e_4e_4=e_5$\\
${\rm N}_{21}^{\alpha}$ & $: $ & $e_1e_1=e_3$  & $e_1e_2=e_3$ &  $e_1e_3=e_5$ &$e_2e_1=e_5$ \\& &$e_2e_2=e_5$  &$e_2e_4=\alpha e_5$ &$e_4e_4=e_5$\\
${\rm N}_{22}^{\alpha}$ & $: $ & $e_1e_1=e_3$  & $e_1e_2=e_3$ &  $e_1e_3=e_5$ \\&& $e_2e_2=e_5$  &$e_2e_4=\alpha e_5$ &$e_4e_4=e_5$\\
${\rm N}_{23}$ & $: $ & $e_1e_1=e_3$  & $e_1e_2=e_3$ &  $e_1e_3=e_5$ \\&& $e_2e_2=e_5$ & $e_4e_1=e_5$ &$e_4e_2=e_5$\\
${\rm N}_{24}$ & $: $ & $e_1e_1=e_3$  & $e_1e_2=e_3$ &  $e_1e_3=e_5$ & $e_2e_2=e_5$ \\
&& $e_2e_4=-e_5$ & $e_4e_1=e_5$ &$e_4e_2=e_5$\\
${\rm N}_{25}^{\alpha}$ & $: $ & $e_1e_1=e_3$  & $e_1e_2=e_3$ &  $e_1e_3=e_5$\\&& $e_2e_4=\alpha e_5$ & $e_4e_1=e_5$ &$e_4e_2=e_5$\\
${\rm N}_{26}$ & $: $ & $e_1e_1=e_3$  & $e_1e_2=e_3$ &  $e_1e_3=e_5$ \\&& $e_2e_1=e_5$ & $e_4e_2=e_5$\\
${\rm N}_{27}$ & $: $ & $e_1e_1=e_3$  & $e_1e_2=e_3$ &  $e_1e_3=e_5$ \\
&& $e_2e_2=e_5$ & $e_2e_4=-e_5$ & $e_4e_2=e_5$\\
${\rm N}_{28}^{\alpha}$ & $: $ & $e_1e_1=e_3$  & $e_1e_2=e_3$ &  $e_1e_3=e_5$ \\
&& $e_2e_4=\alpha e_5$ & $e_4e_2=e_5$\\
${\rm N}_{29}$ & $: $ & $e_1e_1=e_3$  & $e_1e_2=e_3$ &  $e_1e_3=e_5$ &  $e_4e_1=e_5$\\
${\rm N}_{30}$ & $: $ & $e_1e_1=e_3$  & $e_1e_2=e_3$ &  $e_1e_3=e_5$ \\&& $e_2e_2=e_5$ & $e_4e_1=e_5$\\
${\rm N}_{31}$ & $: $ & $e_1e_1=e_3$  & $e_1e_2=e_3$ &  $e_1e_3=e_5$ \\&& $e_2e_4=e_5$ & $e_4e_1=e_5$\\
${\rm N}_{32}$ & $: $ & $e_1e_1=e_3$  & $e_1e_2=e_3$ &  $e_1e_3=e_5$ & $e_2e_4=e_5$ \\
${\rm N}_{33}$ & $: $ & $e_1e_1=e_3$  & $e_1e_2=e_3$ &  $e_1e_3=-e_5$ & $e_2e_3=-e_5$ \\& &$e_3e_1=e_5$ &$e_3e_2=e_5$ &$e_4e_4=e_5$ \\
${\rm N}_{34}$ & $: $ & $e_1e_1=e_3$  & $e_1e_2=e_3$ &  $e_1e_3=-e_5$ & $e_1e_4=e_5$  \\ && $e_2e_3=-e_5$ &$e_3e_1=e_5$  &$e_3e_2=e_5$ &$e_4e_4=e_5$ \\
${\rm N}_{35}^{\alpha}$ & $: $ & $e_1e_1=e_3$  & $e_1e_2=e_3+e_5$ &  $e_1e_3=-e_5$ & $e_1e_4=\alpha e_5$\\ & & $e_2e_3=-e_5$ &$e_3e_1=e_5$   &$e_3e_2=e_5$ &$e_4e_4=e_5$ \\
${\rm N}_{36}$ & $: $ & $e_1e_1=e_3$  & $e_1e_2=e_3+e_5$ &  $e_1e_3=-e_5$ & $e_2e_2=e_5$ \\ && $e_2e_3=-e_5$ &$e_3e_1=e_5$   &$e_3e_2=e_5$ &$e_4e_4=e_5$ \\
${\rm N}_{37}^{\alpha}$ & $: $ & $e_1e_1=e_3$  & $e_1e_2=e_3$ &  $e_1e_3=-e_5$ & $e_1e_4=\alpha e_5$ \\ && $e_2e_3=-e_5$ &$e_3e_1=e_5$  &$e_3e_2=e_5$ &$e_4e_4=e_5$ \\
${\rm N}_{38}$ & $: $ & $e_1e_1=e_3$  & $e_1e_2=e_3+e_5$ &  $e_1e_3=-e_5$ \\ && 
$e_1e_4=e_5$  & $e_2e_2=e_5$ & $e_2e_3=-e_5$  \\ & &$e_3e_1=e_5$  &$e_3e_2=e_5$ &$e_4e_4=e_5$ \\
${\rm N}_{39}$ & $: $ & $e_1e_1=e_3$  & $e_1e_2=e_3$ &  $e_2e_3=-e_5$ \\ &&$e_3e_1=e_5$ &$e_3e_2=e_5$ &$e_4e_4=e_5$ \\
${\rm N}_{40}$ & $: $ & $e_1e_1=e_3$  & $e_1e_2=e_3+e_5$ &  $e_2e_3=-e_5$ \\ &&$e_3e_1=e_5$ &$e_3e_2=e_5$ &$e_4e_4=e_5$ \\
${\rm N}_{41}^{\alpha}$ & $: $ & $e_1e_1=e_3$  & $e_1e_2=e_3+\alpha e_5$ &  $e_1e_4=e_5$ &  $e_2e_3=-e_5$  \\ &&$e_3e_1=e_5$ &$e_3e_2=e_5$ &$e_4e_4=e_5$ \\
${\rm N}_{42}^{\alpha,\beta}$ & $: $ & $e_1e_1=e_3$  & $e_1e_2=e_3+\alpha e_5$ &  $e_1e_4=\beta e_5$ &  $e_2e_2=e_5$ \\ && $e_2e_3=-e_5$  & $e_3e_1=e_5$   &$e_3e_2=e_5$ &$e_4e_4=e_5$ \\
${\rm N}_{43}$ & $: $ & $e_1e_1=e_3$  & $e_1e_2=e_3$ &  $e_1e_3=-e_5$ & $e_1e_4=e_5$ \\ && $e_2e_3=-e_5$ &$e_2e_4=e_5$   &$e_3e_1=e_5$ &$e_3e_2=e_5$  \\
${\rm N}_{44}$ & $: $ & $e_1e_1=e_3$  & $e_1e_2=e_3+e_5$ &  $e_1e_3=-e_5$ & $e_1e_4=e_5$\\ & & $e_2e_3=-e_5$ &$e_2e_4=e_5$   &$e_3e_1=e_5$ &$e_3e_2=e_5$  \\
${\rm N}_{45}$ & $: $ & $e_1e_1=e_3$  & $e_1e_2=e_3$ &  $e_1e_3=-e_5$ \\ 
&& $e_1e_4=e_5$ & $e_2e_3=-e_5$ &$e_2e_4=e_5$  \\ 
& &$e_3e_1=e_5$ &$e_3e_2=e_5$ & $e_4e_1=e_5$ \\
${\rm N}_{46}$ & $: $ & $e_1e_1=e_3$  & $e_1e_2=e_3+e_5$ &  $e_1e_3=-e_5$ & $e_2e_3=-e_5$ \\ &&$e_2e_4=e_5$ &$e_3e_1=e_5$   &$e_3e_2=e_5$ & $e_4e_1=e_5$ \\
${\rm N}_{47}^{\alpha}$ & $: $ & $e_1e_1=e_3$  & $e_1e_2=e_3$ &  $e_1e_3=-e_5$  &
$e_2e_3=-e_5$\\ & &$e_2e_4=e_5$ &$e_3e_1=e_5$  &$e_3e_2=e_5$ & $e_4e_1=\alpha e_5$ \\
${\rm N}_{48}^{\alpha}$ & $: $ & $e_1e_1=e_3$  & $e_1e_2=e_3+e_5$ &  $e_1e_4=\alpha e_5$ & $e_2e_3=-e_5$ \\ &&$e_2e_4=e_5$ &$e_3e_1=e_5$   &$e_3e_2=e_5$ & $e_4e_1=\alpha e_5$ \\
${\rm N}_{49}^{\alpha,\beta}$ & $: $ & $e_1e_1=e_3$  & $e_1e_2=e_3$ &  $e_1e_4=\alpha e_5$ & $e_2e_3=-e_5$ \\ &&$e_2e_4=e_5$ &$e_3e_1=e_5$   &$e_3e_2=e_5$ & $e_4e_1=\beta e_5$ \\
${\rm N}_{50}$ & $: $ & $e_1e_1=e_3$  & $e_1e_2=e_3+e_5$ &  $e_1e_3=-e_5$ &  $e_1e_4=-e_5$ \\ && $e_2e_3=-e_5$ &$e_3e_1=e_5$  &$e_3e_2=e_5$ & $e_4e_1=e_5$ \\
${\rm N}_{51}$ & $: $ & $e_1e_1=e_3$  & $e_1e_2=e_3$ &  $e_1e_3=-e_5$ &  $e_1e_4=\alpha e_5$ \\ && $e_2e_3=-e_5$ &$e_3e_1=e_5$   &$e_3e_2=e_5$ & $e_4e_1=e_5$ \\
${\rm N}_{52}$ & $: $ & $e_1e_1=e_3$  & $e_1e_2=e_3+e_5$ &  $e_1e_3=-e_5$ \\ &&  
$e_1e_4=-e_5$ &  $e_2e_2=e_5$ & $e_2e_3=-e_5$ \\ & & $e_3e_1=e_5$ &$e_3e_2=e_5$ & $e_4e_1=e_5$ \\
${\rm N}_{53}^{\alpha}$ & $: $ & $e_1e_1=e_3$  & $e_1e_2=e_3$ &  $e_1e_3=-e_5$\\ & &  $e_1e_4=\alpha e_5$ &  $e_2e_2=e_5$ & $e_2e_3=-e_5$ \\ & & $e_3e_1=e_5$ &$e_3e_2=e_5$ & $e_4e_1=e_5$ \\
${\rm N}_{54}^{\alpha}$ & $: $ & $e_1e_1=e_3$  & $e_1e_2=e_3$ &  $e_1e_4=\alpha e_5$ & $e_2e_3=-e_5$\\ & & $e_3e_1=e_5$ &$e_3e_2=e_5$ &  $e_4e_1=e_5$ \\
${\rm N}_{55}^{\alpha}$ & $: $ & $e_1e_1=e_3$  & $e_1e_2=e_3$ &  $e_1e_4=\alpha e_5$ & $e_2e_2=e_5$  \\ && $e_2e_3=-e_5$ & $e_3e_1=e_5$   &$e_3e_2=e_5$ & $e_4e_1=e_5$ \\
${\rm N}_{56}$ & $: $ & $e_1e_1=e_3$  & $e_1e_2=e_3$ &  $e_1e_3=-e_5$  &  $e_1e_4=e_5$ \\&&   $e_2e_3=-e_5$  & $e_3e_1=e_5$ &$e_3e_2=e_5$ \\
${\rm N}_{57}$ & $: $ & $e_1e_1=e_3$  & $e_1e_2=e_3$ &  $e_1e_3=-e_5$ &  $e_1e_4=e_5$\\&& $e_2e_2=e_5$ &  $e_2e_3=-e_5$   & $e_3e_1=e_5$ &$e_3e_2=e_5$ \\
${\rm N}_{58}$ & $: $ & $e_1e_1=e_3$  & $e_1e_2=e_3$ &  $e_1e_4=e_5$ \\&& 
$e_2e_3=-e_5$ & $e_3e_1=e_5$ & $e_3e_2=e_5$ \\
${\rm N}_{59}$ & $: $ & $e_1e_1=e_3$  & $e_1e_2=e_3$ &  $e_1e_4=e_5$ & $e_2e_2=e_5$ \\&&  $e_2e_3=-e_5$ & $e_3e_1=e_5$ & $e_3e_2=e_5$ \\
 
${\rm N}_{60}^{(\alpha,\beta)\neq(0,0)}$ & $: $ & 
$e_1e_1=e_5$  & $e_1e_2=e_3$ &  $e_1e_4=-\beta e_5$ \\
& & $e_2e_1=e_4$  &$e_2e_2=-e_3$  &  $e_2e_3=-e_5$ \\
& & $e_2e_4=\alpha e_5$  & $e_3e_2=e_5$ & $e_4e_1=\beta e_5$ \\

${\rm N}_{61}^{(\alpha,\beta, \gamma) \neq(0,0,\gamma)}$ & $: $ & 
$e_1e_2=e_3$ &  $e_1e_3=-(\gamma +1) e_5$ & 
$e_1e_4=-\beta e_5$\\
&  & $e_2e_1=e_4$  &$e_2e_2=-e_3$ &    $e_2e_3=\gamma e_5$ \\
& & $e_2e_4=\alpha e_5$ & $e_3e_2=e_5$ & $e_4e_1=\beta e_5$   \\

${\rm N}_{62}^{(\alpha,\beta)\neq(0,0)}$ & $: $ & $e_1e_2=e_3$ &  $e_1e_3=-e_5$ & $e_1e_4=-\beta e_5$ & $e_2e_1=e_4$\\
&   &$e_2e_2=-e_3$ & $e_2e_3=e_5$   & $e_2e_4=\alpha e_5$ & $e_4e_1=\beta e_5$   \\

${\rm N}_{63}^{\alpha\neq0}$ & $: $ & $e_1e_2=e_3$ &  $e_1e_3=-e_5$ & $e_1e_4=-\alpha e_5$ & $e_2e_1=e_4$ \\
&  &$e_2e_2=-e_3+e_5$ & $e_2e_3=e_5$    & $e_2e_4=\alpha e_5$ & $e_4e_1=\alpha e_5$ \\
 
${\rm N}_{64}^{\alpha\neq1, \beta\neq 0}$ & $: $ & 
$e_1e_1=e_3$  & $e_1e_2=e_4$ & $e_1e_3=e_5$  \\&& 
$e_1e_4=\beta e_5$  &  $e_2e_1=-\alpha e_3$  & $e_2e_2=-e_4$  \\&& $e_2e_3=-\alpha e_5$  &$e_2e_4=-\beta e_5$ \\  

${\rm N}_{65}^{\alpha\neq1,\beta,\gamma,\delta}$ & $: $ & 
$e_1e_1=e_3$  & $e_1e_2=e_4$ & $e_1e_3=(\beta-1)e_5$ \\
  && $e_1e_4=\gamma e_5$  &  $e_2e_1=-\alpha e_3$  & $e_2e_2=-e_4$  & $e_2e_3=-\alpha \beta e_5$ \\
&  &$e_2e_4=-(\gamma+\delta) e_5$    &$e_3e_1=e_5$  &$e_4e_2=\delta e_5$ &   \\

${\rm N}_{66}^{\alpha\neq 1,\beta\neq \frac{1}{1-\alpha},\gamma}$ & $: $ &  $e_1e_1 = e_3$ & $e_1e_2 = e_4+e_5$ & $e_1e_3=(\beta-1) e_5$ \\
  && $e_1e_4=\frac{\beta\delta}{(\alpha-1)\beta+1}e_5$  & $e_2e_1 = -\alpha e_3$ & $e_2e_2 = -e_4$ & $e_2e_3 = - \alpha \beta e_5$ \\
& 
 & $e_2e_4= -\frac{\gamma(\alpha\beta+1)}{(\alpha-1)\beta+1} e_5$  
 & $e_3e_1= e_5$ &  $e_4e_2= \gamma e_5$\\ 

${\rm N}_{67}^{\alpha\neq1,\beta}$ & $: $ & 
 $e_1e_1 = e_3$ & $e_1e_2 = e_4+e_5$ & $e_1e_3 = \frac{\alpha}{1-\alpha}e_5$ & $e_1e_4 = \beta e_5$ \\
 &&
 $e_2e_1 = -\alpha e_3$ & 
 $e_2e_2 = -e_4$ & $e_2e_3 = \frac{\alpha}{\alpha-1}e_5$ \\&& $e_2e_4 = -\beta e_5$ & $e_3e_1 = e_5$  \\ 

${\rm N}_{68}^{\alpha, \beta}$ & $: $ & $e_1e_1=e_3$  & $e_1e_2=e_4$ & $e_1e_3=\alpha e_5$ & $e_1e_4=\beta e_5$ \\ & &  $e_2e_2=e_4$ & $e_2e_4=-(1+\beta) e_5$  & $e_4e_2=e_5$\\

${\rm N}_{69}^{\alpha}$ & $: $ & $e_1e_1=e_3$  & $e_1e_2=e_4$ & $e_1e_3=\alpha e_5$ & $e_1e_4=-e_5$  \\ & &  $e_2e_1=e_5$ &  $e_2e_2=e_4$ & $e_4e_2=e_5$\\

${\rm N}_{70}$ & $: $ & $e_1e_1=e_3$  & $e_1e_2=e_4$ & $e_1e_3=e_5$ & $e_1e_4=-e_5$ \\ & &  $e_2e_1=-e_3$ & $e_2e_2=-e_4$ & $e_2e_3=-e_5$  &$e_2e_4=e_5$\\

${\rm N}_{71}$ & $: $ & $e_1e_1=e_3$  & $e_1e_2=e_4+e_5$ & $e_1e_3=e_5$ & $e_1e_4=-e_5$ \\ &  &  $e_2e_1=-e_3-e_5$ & $e_2e_2=-e_4$ & $e_2e_3=-e_5$  &$e_2e_4=e_5$\\

${\rm N}_{72}$ & $: $ & $e_1e_1=e_3$  & $e_1e_2=e_4$ & $e_1e_3=e_5$ & $e_1e_4=-e_5$ \\ & &  $e_2e_1=-e_3+e_5$ & $e_2e_2=-e_4$ & $e_2e_3=-e_5$  &$e_2e_4=e_5$\\

${\rm N}_{73}^{\alpha}$ & $: $ & $e_1e_1=e_3$  & $e_1e_2=e_4$ & $e_1e_3=(\alpha-1)e_5$ & $e_1e_4=(1-\alpha)e_5$\\ &  &  $e_2e_1=-e_3$ & $e_2e_2=-e_4$ & $e_2e_3=(1-\alpha)e_5$  & $e_2e_4=(\alpha-1)e_5$  \\ & &$e_3e_1=e_5$ &$e_3e_2=-e_5$  &$e_4e_1=-e_5$ &$e_4e_2=e_5$ \\

${\rm N}_{74}^{\alpha}$ & $: $ & $e_1e_1=e_3$  & $e_1e_2=e_4$ & $e_1e_3=(\alpha-1)e_5$ & $e_1e_4=(1-\alpha)e_5$ \\ & &  $e_2e_1=-e_3+e_5$ & $e_2e_2=-e_4$ & $e_2e_3=(1-\alpha)e_5$  & $e_2e_4=(\alpha-1)e_5$  \\ & &$e_3e_1=e_5$ &$e_3e_2=-e_5$  &$e_4e_1=-e_5$ &$e_4e_2=e_5$ \\

${\rm N}_{75}$ & $: $ & $e_1e_1=e_3$  & $e_1e_2=e_4$ & $e_1e_4=e_5$ &  $e_2e_1=-e_3$ \\ & & $e_2e_2=-e_4$ & $e_2e_4=-e_5$ &$e_3e_1=e_5$ &$e_3e_2=-e_5$ \\ & &$e_4e_1=-e_5$ &$e_4e_2=e_5$ \\

${\rm N}_{76}$ & $: $ & $e_1e_1=e_3$  & $e_1e_2=e_4$ & $e_1e_4=e_5$ &  $e_2e_1=-e_3+e_5$ \\ & & $e_2e_2=-e_4$ & $e_2e_4=-e_5$ &$e_3e_1=e_5$ &$e_3e_2=-e_5$ \\ & &$e_4e_1=-e_5$ &$e_4e_2=e_5$ \\

${\rm N}_{77}^{\alpha}$ & $: $ & $e_1e_1=e_3$  & $e_1e_2=e_4$ & $e_1e_3=(\alpha-1)e_5$ \\
& & $e_1e_4=e_5$   &  $e_2e_1=-e_3$ & $e_2e_2=-e_4$\\
&  & $e_2e_3=-\alpha e_5$  &$e_2e_4=-e_5$   &$e_3e_1=e_5$\\

${\rm N}_{78}^{\alpha}$ & $: $ & $e_1e_1=e_3$  & $e_1e_2=e_4$ & $e_1e_3=(\alpha-1)e_5$ \\
& & $e_1e_4=e_5$  &  $e_2e_1=-e_3+e_5$ & $e_2e_2=-e_4$ \\
& & $e_2e_3=-\alpha e_5$  &$e_2e_4=-e_5$   &$e_3e_1=e_5$\\

${\rm N}_{79}^{\alpha}$ & $: $ & $e_1e_1=e_3$  & $e_1e_2=e_4$ & $e_1e_3=(\alpha+2)e_5$ & $e_1e_4=-e_5$ \\  &  &  $e_2e_1=-e_3$ & $e_2e_2=-e_4$ & $e_2e_3=-(1+\alpha)e_5$  &$e_3e_1=-2e_5$ \\ & & $e_3e_2=e_5$ &$e_4e_1=e_5$\\

${\rm N}_{80}^{\alpha}$ & $: $ & $e_1e_1=e_3$  & $e_1e_2=e_4$ & $e_1e_3=(\alpha+2)e_5$ & $e_1e_4=-e_5$ \\ & &  $e_2e_1=-e_3+e_5$ & $e_2e_2=-e_4$ & $e_2e_3=-(1+\alpha)e_5$  &$e_3e_1=-2e_5$ \\ & &$e_3e_2=e_5$ &$e_4e_1=e_5$\\

${\rm N}_{81}^{\alpha}$ & $: $ & $e_1e_1=e_3$  & $e_1e_2=e_4$ & $e_1e_3=(\alpha+2)e_5$ &  $e_2e_1=-e_3$ \\ && $e_2e_2=-e_4$ & $e_2e_3=-(1+\alpha)e_5$ & $e_2e_4=-e_5$  &$e_3e_1=-2e_5$\\ & &$e_3e_2=e_5$ &$e_4e_1=e_5$\\

${\rm N}_{82}^{\alpha}$ & $: $ & $e_1e_1=e_3$  & $e_1e_2=e_4$ & $e_1e_3=(\alpha+2)e_5$ &  $e_2e_1=-e_3+e_5$\\ & & $e_2e_2=-e_4$ & $e_2e_3=-(1+\alpha)e_5$ & $e_2e_4=-e_5$  &$e_3e_1=-2e_5$\\ & &$e_3e_2=e_5$ &$e_4e_1=e_5$\\

${\rm N}_{83}^{\alpha, \beta}$ & $: $ & $e_1e_1=e_3$  & $e_1e_2=e_4$ & $e_1e_3=\alpha e_5$ & $e_1e_4=(\beta-1)e_5$ \\ & &  $e_2e_1=-e_3$ & $e_2e_2=-e_4$ & $e_2e_3=-(1+\alpha)e_5$ & $e_2e_4=-\beta e_5$\\  & &$e_3e_2=e_5$ &$e_4e_1=e_5$\\

${\rm N}_{84}^{\alpha, \beta}$ & $: $ & $e_1e_1=e_3$  & $e_1e_2=e_4$ & $e_1e_3=\alpha e_5$ & $e_1e_4=(\beta-1)e_5$ \\ & &  $e_2e_1=-e_3+e_5$ & $e_2e_2=-e_4$ & $e_2e_3=-(1+\alpha)e_5$ & $e_2e_4=-\beta e_5$\\ & &$e_3e_2=e_5$ &$e_4e_1=e_5$\\
 
${\rm N}_{85}$ & $: $ & $e_1e_2=e_3$  & $e_1e_4=e_5$ & $e_2e_1=e_4$ \\
& & $e_2e_3=e_5$  &  $e_3e_2=-e_5$ & $e_4e_1=-e_5$ \\

${\rm N}_{86}$ & $: $ & $e_1e_1=e_5$  & $e_1e_2=e_3$  & $e_1e_4=e_5$ & $e_2e_1=e_4$ \\
& & $e_2e_3=e_5$  &  $e_3e_2=-e_5$   & $e_4e_1=-e_5$ \\

${\rm N}_{87}^{\alpha}$ & $: $ & $e_1e_1=\alpha e_5$  & $e_1e_2=e_3$  & $e_1e_4=e_5$ & $e_2e_1=e_4$ \\
& & $e_2e_2=e_5$ & $e_2e_3=e_5$   &  $e_3e_2=-e_5$ & $e_4e_1=-e_5$ \\

${\rm N}_{88}^{\alpha,\beta}$ & $: $ & $e_1e_2=e_3$  &  $e_1e_3=e_5$  & $e_1e_4=\alpha e_5$ & $e_2e_1=e_4$ \\
& & $e_2e_3=\beta e_5$  & $e_2e_4=e_5$ & $e_3e_2=-\beta e_5$ & $e_4e_1=-\alpha e_5$ \\

${\rm N}_{89}^{\alpha}$ & $: $ & $e_1e_2=e_3$  & $e_2e_1=e_4$ & $e_2e_3=\alpha e_5$ \\
&  & $e_2e_4=e_5$ & $e_3e_2=-\alpha e_5$ \\

${\rm N}_{90}^{\alpha}$ & $: $ & $e_1e_1=e_5$ & $e_1e_2=e_3$  & $e_2e_1=e_4$ \\
& & $e_2e_3=\alpha e_5$  & $e_2e_4=e_5$   &   $e_3e_2=-\alpha e_5$ \\

${\rm N}_{91}^{\alpha\neq0}$ & $: $ & $e_1e_2=e_3$  & $e_1e_4=e_5$ & $e_2e_1=e_4$ & $e_2e_3=\alpha e_5$  \\
& & $e_2e_4=e_5$    & $e_3e_2=-\alpha e_5$ & $e_4e_1=-e_5$ \\

${\rm N}_{92}^{\alpha\neq0}$ & $: $ & $e_1e_1=e_5$ & $e_1e_2=e_3$  & $e_1e_4=e_5$ & $e_2e_1=e_4$ \\
& & $e_2e_3=\alpha e_5$  & $e_2e_4=e_5$   & $e_3e_2=-\alpha e_5$ & $e_4e_1=-e_5$ \\

${\rm N}_{93}$ & $: $ & $e_1e_1=e_4$  & $e_1e_2=e_3$ & $e_1e_3=-e_5$ & $e_1e_4=2e_5$ \\ & & $e_2e_1=-e_3$ & $e_2e_2=2e_3+e_4$ & $e_2e_4=e_5$ \\

${\rm N}_{94}^{\alpha}$ & $: $ & $e_1e_1=e_4$  & $e_1e_2=e_3$ & $e_1e_3=-\alpha e_5$ & $e_1e_4=(2\alpha+1)e_5$ \\ & & $e_2e_1=-e_3$ & $e_2e_2=2e_3+e_4$ & $e_2e_3=e_5$ & $e_2e_4=\alpha e_5$ \\

${\rm N}_{95}$ & $: $ & $e_1e_1=e_4$  & $e_1e_2=e_3$ & $e_1e_3= e_5$ & $e_1e_4=-e_5$ \\ & & $e_2e_1=-e_3+e_5$ & $e_2e_2=2e_3+e_4$ & $e_2e_3=e_5$ & $e_2e_4=- e_5$ \\

${\rm N}_{96}^{\alpha}$ & $: $ & $e_1e_1=e_4$  & $e_1e_2=e_3$ & $e_1e_3= - e_5$ & $e_1e_4=\alpha e_5$ \\ & & $e_2e_1=-e_3+e_5$ & $e_2e_2=2e_3+e_4$ & $e_2e_3=(2+\alpha)e_5$ & $e_2e_4=- e_5$ \\
& & $e_3e_1=e_5$ & $e_3e_2=-(2+\alpha)e_5$ & $e_4e_1= \alpha e_5$ & $e_4e_2=e_5$  \\

${\rm N}_{97}$ & $: $ & $e_1e_1=e_4$  & $e_1e_2=e_3$ & $e_1e_3=-e_5$ \\ & &
$e_2e_1=-e_3$ & $e_2e_2=2e_3+e_4 + e_5$  & $e_2e_3=-2e_5$ & $e_2e_4=-e_5$\\
& &
$e_3e_1=e_5$ & $e_4e_1=-2e_5$ & $e_4e_2=e_5$\\

${\rm N}_{98}^{\alpha, \beta, \gamma}$ & $: $ & $e_1e_1=e_4$  & $e_1e_2=e_3$ & $e_1e_3=-(\beta +2)e_5$ & $e_1e_4=(\alpha+\beta -2\gamma)e_5$ \\ & &
$e_2e_1=-e_3$ & $e_2e_2=2e_3+e_4$ &
$e_2e_3=\alpha e_5$ &
$e_2e_4= \beta e_5$ \\
& &
$e_3e_1=e_5$ &
$e_3e_2= - (\gamma +2)e_5$ & $e_4e_1=\gamma e_5$ &
$e_4e_2= e_5$\\
 
${\rm N}_{99}^{\alpha, \beta}$ & $: $ & 
$e_1e_1=e_4$  & $e_1e_2=e_3$ &  $e_2e_2=2e_3+e_4+e_5$ & \\ &&
$e_1e_3=-(\alpha +2)e_5$ & $e_2e_1=-e_3$  & 
\multicolumn{2}{l}{$e_1e_4=(\sqrt{-2\alpha\beta-2\alpha-2\beta-1} - \beta) e_5$}  \\
&& $e_2e_4= \alpha e_5$  & $e_3e_1=e_5$  & 
\multicolumn{2}{l}{$e_2e_3=(\beta-\alpha+ \sqrt{-2\alpha\beta-2\alpha-2\beta-1}) e_5$} \\& &
$e_3e_2= - (\beta +2)e_5$ & $e_4e_1=\beta e_5$ &
$e_4e_2= e_5$\\

${\rm N}_{100}^{\alpha, \beta}$ & $: $ & $e_1e_1=e_4$  & $e_1e_2=e_3$ &  $e_2e_2=2e_3+e_4+e_5$ & \\ &&
$e_1e_3=-(\alpha +2)e_5$ & $e_2e_1=-e_3$ &  
\multicolumn{2}{l}{$e_1e_4=-(\sqrt{-2\alpha\beta-2\alpha-2\beta-1} + \beta) e_5$} \\
&& $e_2e_4= \alpha e_5$  & $e_3e_1=e_5$ &  
\multicolumn{2}{l}{$e_2e_3=(\beta-\alpha - \sqrt{-2\alpha\beta-2\alpha-2\beta-1}) e_5$}  \\&&
$e_3e_2= - (\beta +2)e_5$ & $e_4e_1=\beta e_5$ &
$e_4e_2= e_5$\\

${\rm N}_{101}^{\alpha, \beta}$ & $: $ &  $e_1e_2=e_4$ &  $e_1e_3= \alpha e_5$
&  $e_1e_4= e_5$ & $e_2e_2=e_3$  \\
&& $e_2e_3=e_5$
    & $e_2e_4=(\alpha - \beta) e_5$ & $e_4e_2=\beta e_5$\\

${\rm N}_{102}^{\alpha}$ & $: $ &  $e_1e_2=e_4$ &  $e_1e_3= e_5$
&  $e_1e_4= e_5$ & $e_2e_1=e_5$  \\
&& $e_2e_2=e_3$
    &  $e_2e_4=(1 - \alpha) e_5$ & $e_4e_2=\alpha e_5$\\

${\rm N}_{103}^{\alpha}$ & $: $ &  $e_1e_2=e_4$ &  $e_1e_3= e_5$
&  $e_1e_4= e_5$ & $e_2e_2=e_3$
\\  &   &  $e_2e_4=(1 - \alpha) e_5$ & $e_4e_2=\alpha e_5$\\

${\rm N}_{104}$ & $: $ & $e_1e_1=e_5$ & $e_1e_2=e_4$ &  $e_1e_3= e_5$\\ &
& $e_2e_1=e_4$ & $e_2e_2=e_3$ &   &  $e_2e_3=e_5$\\ 
& & $e_2e_4= e_5$ & $e_3e_1= e_5$ & $e_4e_2= e_5$ \\

${\rm N}_{105}$ & $: $ & $e_1e_1=e_5$ & $e_1e_2=e_4$ &  $e_1e_3= e_5$
& $e_2e_1=-e_4+e_5$\\ & & $e_2e_2=e_3$    &  $e_2e_4= - e_5$ & $e_3e_1= - e_5$ & $e_4e_2= e_5$ \\

${\rm N}_{106}^{\alpha}$ & $: $ & $e_1e_2=e_4$ & $e_1e_3=e_5$ & $e_2e_1=-2e_4$ & $e_2e_2=e_3$ \\ && $e_2e_3=e_5$ & $e_2e_4=e_5$ & $e_3e_2=\alpha e_5$ \\

${\rm N}_{107}^{\alpha}$ & $: $ & $e_1e_1=e_5$ & $e_1e_2=e_4$ & $e_1e_3= \frac 1 2 e_5$ & $e_2e_1= - \frac 1 2 e_4$\\ & & $e_2e_2=e_3$  & $e_2e_3=\alpha e_5$ & $e_2e_4= - e_5$ \\ && $e_3e_1= - \frac 1 2  e_5$  & $e_3e_2= e_5$  & $e_4e_2= e_5$\\

${\rm N}_{108}^{\alpha}$ & $: $ & $e_1e_2=e_4$ & $e_1e_3= \frac 1 2 e_5$ & $e_2e_1= - \frac 1 2 e_4$\\ & & $e_2e_2=e_3$   & $e_2e_3=\alpha e_5$ & $e_2e_4= - e_5$ \\ && $e_3e_1= - \frac 1 2  e_5$  & $e_3e_2= e_5$  & $e_4e_2= e_5$\\

${\rm N}_{109}$ & $: $ & $e_1e_2=e_4$ & $e_1e_3=e_5$ & $e_2e_1=- \frac 1 2 e_4 + e_5$ & $e_2e_2=e_3$ \\ &&  $e_2e_4= - \frac 1 2 e_5$ & $e_3e_2= e_5$  & $e_3e_1= - \frac 1 2  e_5$  & $e_4e_2= e_5$\\

${\rm N}_{110}^{\alpha}$ & $: $ & $e_1e_2=e_4$ & $e_1e_3=\alpha e_5$ & $e_2e_1=- \frac 1 2 e_4 $ & $e_2e_2=e_3$ \\ &&  $e_2e_4= (\alpha - \frac 3 2) e_5$ & $e_3e_2= e_5$  & $e_3e_1= - \frac 1 2  e_5$  & $e_4e_2= e_5$\\

${\rm N}_{111}^{ \alpha \neq - \frac 1 2 }$ & $: $ & $e_1e_2=e_4$  & $e_1e_3=e_5$ & $e_2e_1=\alpha e_4+e_5$ & $e_2e_2=e_3$   \\ & & $e_2e_3= e_5$ & $e_2e_4= \alpha e_5$ & $e_3e_1= \alpha  e_5$  & $e_4e_2= e_5$\\

${\rm N}_{112}^{\alpha}$ & $: $ & $e_1e_2=e_4$  & $e_1e_3=e_5$ & $e_2e_1=\alpha e_4+e_5$ & $e_2e_2=e_3$   \\ & & $e_2e_4= \alpha e_5$ & $e_3e_1= \alpha  e_5$  & $e_4e_2= e_5$\\

${\rm N}_{113}^{\alpha}$ & $: $ & $e_1e_1=e_5$ & $e_1e_2=e_4$  & $e_1e_3= - \alpha e_5$ \\ & &   $e_2e_1=\alpha e_4$ & $e_2e_2=e_3$ &  & $e_2e_3=e_5$ \\ && $e_2e_4= - e_5$ & $e_3e_1= \alpha  e_5$  & $e_4e_2= e_5$\\ 

${\rm N}_{114}^{\alpha}$ & $: $ & $e_1e_1=e_5$ & $e_1e_2=e_4$  & $e_1e_3= - \alpha e_5$  &   $e_2e_1=\alpha e_4$ \\ && $e_2e_2=e_3$ &    $e_2e_4= - e_5$ & $e_3e_1= \alpha  e_5$  & $e_4e_2= e_5$\\ 

${\rm N}_{115}^{\alpha, \beta}$ & $: $ & $e_1e_2=e_4$  & $e_1e_3= \beta e_5$ &  $e_2e_1=\alpha e_4$ & $e_2e_2=e_3$ \\ &&  $e_2e_3=e_5$  & $e_2e_4= (\beta +\alpha -1) e_5$ &$e_3e_1= \alpha  e_5$  & $e_4e_2= e_5$\\

${\rm N}_{116}^{\alpha, \beta}$ & $: $ & $e_1e_2=e_4$  & $e_1e_3= \beta e_5$ &  $e_2e_1=\alpha e_4$ & $e_2e_2=e_3$   \\ && $e_2e_4= (\beta +\alpha -1) e_5$ &$e_3e_1= \alpha  e_5$  & $e_4e_2= e_5$\\

${\rm N}_{117}^{\alpha}$ & $: $ & $e_1e_2=e_4$  &  $e_1e_3= e_5$ & $e_2e_1=\alpha e_4$\\ & & $e_2e_2=e_3$    & $e_2e_4=e_5$ & $e_3e_2=e_5$\\

${\rm N}_{118}^{\alpha}$ & $: $ & $e_1e_2=e_4$  &  $e_1e_3= e_5$ & $e_2e_1=\alpha e_4$ \\ && $e_2e_2=e_3$   & $e_2e_4=e_5$ \\

${\rm N}_{119}$ & $: $ & $e_1e_1=e_2$  & $e_1e_3=e_5$ & $e_2e_1=e_3$ & $e_4e_4=e_5$  \\
${\rm N}_{120}$ & $: $ & $e_1e_1=e_2$  & $e_1e_3=e_5$ & $e_1e_4=e_5$ & $e_2e_1=e_3$  \\

${\rm N}_{121}$ & $: $ & $e_1e_1=e_2$  & $e_1e_2=e_3$ & $e_1e_3=2e_5$ & $e_4e_1=e_5$  \\
${\rm N}_{122}$ & $: $ & $e_1e_1=e_2$  & $e_1e_2=e_3$ & $e_1e_3=2e_5$ & $e_2e_1=e_5$  \\
${\rm N}_{123}$ & $: $ & $e_1e_1=e_2$  & $e_1e_2=e_3$ & $e_1e_3=2e_5$ \\ && $e_2e_1=e_5$ & $e_4e_1=e_5$  \\
${\rm N}_{124}$ & $: $ & $e_1e_1=e_2$  & $e_1e_2=e_3$ & $e_1e_3=2e_5$ \\ && $e_2e_1=e_5$ & $e_4e_4=e_5$  \\
${\rm N}_{125}^{\alpha\neq1}$ & $: $ & $e_1e_1=e_2$  & $e_1e_2=e_3$ & $e_1e_3=(2-\alpha)e_5$ & $e_2e_1=\alpha e_3$ \\&& $e_2e_2=\alpha e_5$ & $e_3e_1=\alpha e_5$ & $e_4e_4=e_5$\\

${\rm N}_{126}^{\alpha\neq0,1}$ & $: $ & $e_1e_1=e_2$  & $e_1e_2=e_3$ & $e_1e_3=(2-\alpha)e_5$ & $e_1e_4=e_5$ \\&& $e_2e_1=\alpha e_3$ & $e_2e_2=\alpha e_5$ & $e_3e_1=\alpha e_5$\\

${\rm N}_{127}$ & $: $ & $e_1e_1=e_2$  & $e_1e_2=e_3$ & $e_1e_3=e_5$ & $e_2e_1=e_3+e_5$ \\&& $e_2e_2=e_5$ & $e_3e_1=e_5$ & $e_4e_1=e_5$\\

${\rm N}_{128}$ & $: $ & $e_1e_1=e_2$  & $e_1e_2=e_3$ & $e_1e_3=e_5$ & $e_2e_1=e_3$ \\&& $e_2e_2=e_5$ & $e_3e_1=e_5$ & $e_4e_1=e_5$\\

${\rm N}_{129}$ & $: $ & $e_1e_1=e_2$  & $e_1e_2=e_3$ & $e_1e_3=e_5$ & $e_2e_1=e_3$ \\&& $e_2e_2=e_5$ & $e_3e_1=e_5$ & $e_4e_1=e_5$ & $e_4e_4=e_5$\\
${\rm N}_{130}^{\alpha}$ & $: $ & $e_1e_1=e_2$  & $e_1e_2=e_3$ & $e_1e_3=e_5$ & $e_2e_1=e_3+e_5$ \\&& $e_2e_2=e_5$ & $e_3e_1=e_5$ & $e_4e_1=e_5$ & $e_4e_4=\alpha e_5$\\

${\rm N}_{131}^{\alpha\neq0, \beta}$ & $: $ & $e_1e_1=e_2$  & $e_1e_2=e_4$ & $e_1e_3=e_4$ & $e_1e_4=\frac{2-\alpha}{\alpha}e_5$ \\ && 
$e_2e_1=\alpha e_4$   & $e_2e_2=e_5$  & $e_2e_3=e_5$\\ & & 
$e_3e_2=-e_5$ & $e_3e_3=\beta e_5$ & $e_4e_1=e_5$ \\

${\rm N}_{132}^{\alpha\neq0,1}$ & $: $ & $e_1e_1=e_2$  & $e_1e_2=e_4$ & $e_1e_3=e_4+e_5$  
 & $e_1e_4=\frac{2-\alpha}{\alpha}e_5$\\ & & $e_2e_1=\alpha e_4$   & $e_2e_2=e_5$  
 & $e_2e_3=e_5$ \\ && $e_3e_2=-e_5$ & $e_3e_3=\frac{1}{(\alpha-1)^2} e_5$ & $e_4e_1=e_5$ \\

${\rm N}_{133}$ & $: $ & $e_1e_1=e_2$  & $e_1e_3=e_5$ & $e_1e_4=-e_5$ & $e_2e_1=e_4$ \\ && $e_2e_3=e_4$   & $e_3e_1=e_5$ & $e_3e_2=-e_5$ & $e_4e_1=e_5$ \\
${\rm N}_{134}^{\alpha}$ & $: $ & $e_1e_1=e_2$  & $e_1e_3=e_5$ & $e_1e_4=-e_5$ & $e_2e_1=e_4$\\ & & $e_2e_3=e_4$   & $e_3e_2=-e_5$ & $e_3e_3=\alpha e_5$ & $e_4e_1=e_5.$\\

${\rm N}_{135}^{\alpha}$ & $: $ & $e_1e_1=e_2$  & $e_1e_2=e_4$ & $e_1e_4=e_5$ & $e_2e_1=\alpha e_5$ \\&& $e_3e_1=e_4+e_5$ & $e_3e_2=e_5$ & $e_3e_3=2e_5$\\

${\rm N}_{136}^{\alpha}$ & $: $ & $e_1e_1=e_2$  & $e_1e_2=e_4$ & $e_1e_4=e_5$ \\&& $e_3e_1=e_4$ & $e_3e_2=e_5$ & $e_3e_3=\alpha e_5$\\

${\rm N}_{137}^{\alpha}$ & $: $ & $e_1e_1=e_2$  & $e_1e_2=e_4$ & $e_1e_4=e_5$ & $e_2e_1=e_5$ \\&& $e_3e_1=e_4$ & $e_3e_2=e_5$ & $e_3e_3=\alpha e_5$\\

${\rm N}_{138}$ & $: $ & $e_1e_2=e_3$ & $e_1e_3=e_4$ & $e_1e_4=e_5$ \\
${\rm N}_{139}$ & $: $ & $e_1e_2=e_3$ & $e_1e_3=e_4$ & $e_1e_4=e_5$ & $e_2e_2=e_5$\\
${\rm N}_{140}$ & $: $ & $e_1e_2=e_3$ & $e_1e_3=e_4$ & $e_1e_4=e_5$ & $e_2e_1=e_5$\\
${\rm N}_{141}$ & $: $ & $e_1e_2=e_3$ & $e_1e_3=e_4$ & $e_1e_4=e_5$  \\
&& $e_2e_1=e_5$ & $e_2e_2=e_5$\\
${\rm N}_{142}$ & $: $ & $e_1e_2=e_3$ & $e_1e_3=e_4$ & $e_1e_4=e_5$ \\&& $e_2e_3=e_5$ & $e_3e_2=-e_5$\\
${\rm N}_{143}$ & $: $ & $e_1e_2=e_3$ & $e_1e_3=e_4$ & $e_1e_4=e_5$ & $e_2e_2=e_5$ \\ & & $e_2e_3=e_5$ & $e_3e_2=-e_5$\\
${\rm N}_{144}^{\alpha}$ & $: $ & $e_1e_2=e_3$ & $e_1e_3=e_4$ & $e_1e_4=e_5$ & $e_2e_1=e_5$ \\&& $e_2e_2=\alpha e_5$ & $e_2e_3=e_5$ & $e_3e_2=-e_5$\\

${\rm N}_{145}^{\alpha}$ & $: $ & $e_1e_2=e_3$ & $e_1e_3=e_4$ & $e_1e_4=e_5$ \\&& $e_2e_2=e_4$ & $e_2e_3=(1-\alpha)e_5$ & $e_3e_2=\alpha e_5$\\
${\rm N}_{146}^{\alpha}$ & $: $ & $e_1e_2=e_3$ & $e_1e_3=e_4$ & $e_1e_4=e_5$ & $e_2e_1=e_5$ \\&& $e_2e_2=e_4$ & $e_2e_3=(1-\alpha)e_5$ & $e_3e_2=\alpha e_5$\\
${\rm N}_{147}^{\alpha,\beta}$ & $: $ & $e_1e_2=e_3$ & $e_1e_3=e_4$ & $e_1e_4=e_5$ & $e_2e_1=\alpha e_5$ \\&& $e_2e_2=e_4+e_5$ & $e_2e_3=(1-\beta)e_5$ & $e_3e_2=\beta e_5$\\

${\rm N}_{148}^{\alpha}$ & $:$& $e_1 e_1 = e_2$  & $e_1e_2=\alpha e_4$  & $e_2 e_1= (\alpha+1)e_4$  & $e_3e_3=e_5$  \\

${\rm N}_{149}^{\alpha}$ & $:$& $e_1 e_1 = e_2$  & $e_1e_2=\alpha e_4+e_5$  & $e_2 e_1= (\alpha+1)e_4+e_5$  & $e_3e_3=e_5$  \\

${\rm N}_{150}^{\alpha}$ & $:$& $e_1 e_1 = e_2$  & $e_1e_2=\alpha e_4$  & $e_2 e_1= (\alpha+1)e_4$ \\ & & $e_3e_1=e_5$ & $e_3e_3=e_5$  \\

${\rm N}_{151}^{\alpha}$ & $:$& $e_1 e_1 = e_2$  & $e_1e_2=\alpha e_4+e_5$  & $e_2 e_1= (\alpha+1)e_4+e_5$ \\ & & $e_3e_1=e_5$ & $e_3e_3=e_5$  \\

${\rm N}_{152}^{\alpha}$ & $:$& $e_1 e_1 = e_2$  & $e_1e_2=\alpha e_4$  & $e_1e_3=e_4$  \\ & & $e_2 e_1= (\alpha+1)e_4$ & $e_3e_1=e_4$ & $e_3e_3=e_5$  \\

${\rm N}_{153}^{\alpha}$ & $:$& $e_1 e_1 = e_2$  & $e_1e_2=\alpha e_4 + e_5$  & $e_1e_3=e_4$  \\ & & $e_2 e_1= (\alpha+1)e_4+e_5$ & $e_3e_1=e_4$ & $e_3e_3=e_5$  \\

${\rm N}_{154}^{\alpha, \beta}$ & $:$& $e_1 e_1 = e_2$  & $e_1e_2=\alpha e_4 + \beta e_5$  & $e_1e_3=e_4$  \\ & & $e_2 e_1= (\alpha+1)e_4+\beta e_5$ & $e_3e_1=e_4+e_5$ & $e_3e_3=e_5$  \\

${\rm N}_{155}^{\alpha, \beta}$ & $:$& $e_1 e_1 = e_2$  & $e_1e_2=\alpha e_4 $  & $e_1e_3=\beta e_5$  \\ & & $e_2 e_1= (\alpha+1)e_4$ & $e_3e_1=(\beta+1) e_5$    \\

${\rm N}_{156}^{\alpha}$ & $:$& $e_1 e_1 = e_2$  & $e_1e_2=\alpha e_4 $  & $e_1e_3=e_4+\alpha e_5$  \\ & & $e_2 e_1= (\alpha+1)e_4$ & $e_3e_1=e_4+(\alpha+1) e_5$ \\

${\rm N}_{157}^{\alpha, \beta}$ & $:$& $e_1 e_1 = e_2$  & $e_1e_2=\alpha e_4 + e_5 $  & $e_1e_3=\beta e_5$  \\ & & $e_2 e_1= (\alpha+1)e_4+e_5$ & $e_3e_1=(\beta+1) e_5$ \\

${\rm N}_{158}^{\alpha, \beta}$ & $:$& $e_1 e_1 = e_2$  & $e_1e_2=\alpha e_4 $  & $e_1e_3=\beta e_5$  \\ & & $e_2 e_1= (\alpha+1)e_4$ & $e_3e_1=(\beta+1) e_5$ & $e_3e_3= e_4$\\

${\rm N}_{159}^{\alpha}$ & $:$& $e_1 e_1 = e_2$  & $e_1e_2=\alpha e_4+e_5 $  & $e_2 e_1= (\alpha+1)e_4+e_5$ \\ & & $e_3e_1=e_5$ & $e_3e_3= e_4 $\\

${\rm N}_{160}$ & $:$& $e_1 e_1 = e_2$  & $e_1e_2=e_4 $  & $e_2 e_1= e_4+e_5$ & $e_3e_1=e_5$ \\

${\rm N}_{161}^{\alpha}$ & $:$& $e_1 e_1 = e_2$  & $e_1e_2=e_4 $ & $e_1e_3=e_5$ \\ & & $e_2 e_1= e_4+e_5$ & $e_3e_1=\alpha e_5$ \\

${\rm N}_{162}$ & $:$& $e_1 e_1 = e_2$  & $e_1e_2=e_4 $ & $e_2 e_1= e_4+e_5$ & $e_3e_3= e_5$ \\

${\rm N}_{163}$ & $:$& $e_1 e_1 = e_2$  & $e_1e_2=e_4 $ & $e_1e_3=e_4 $ \\ & & $e_2 e_1= e_4+e_5$ & $e_3 e_1= e_4$ & $e_3e_3= e_5$ \\

${\rm N}_{164}^{\alpha}$ & $:$& $e_1 e_1 = e_2$  & $e_1e_2=\alpha e_4 $ & $e_1e_3=e_5 $ \\ & & $e_2 e_1= (\alpha+1)e_4$ & $e_3 e_1= e_5$ \\

${\rm N}_{165}^{\alpha}$ & $:$& $e_1 e_1 = e_2$  & $e_1e_2=\alpha e_4 $ & $e_1e_3=e_5 $ \\ & & $e_2 e_1= (\alpha+1)e_4$ & $e_3 e_1= e_5$ & $e_3 e_3= e_4$\\

${\rm N}_{166}$ & $:$& $e_1 e_1 = e_2$  & $e_1e_2= e_4 $ & $e_2 e_1= e_4$ \\ & & $e_3 e_1= e_4$ & $e_3 e_3= e_5$\\

${\rm N}_{167}^{\alpha}$ & $:$& $e_1 e_1 = e_2$  & $e_1e_2= e_4 $ & $e_1 e_3= \alpha e_5$ \\ & & $e_2 e_1= e_4$  & $e_3 e_1= (\alpha+1)e_5$ & $e_3 e_3= e_4$\\

${\rm N}_{168}$ & $:$& $e_1 e_1 = e_2$  & $e_1e_2= e_4 +e_5 $ & $e_2 e_1= e_4+e_5$  \\ & & $e_3 e_1= e_4$ & $e_3 e_3= e_5$\\

${\rm N}_{169}^{\alpha}$ & $:$& $e_1 e_1 = e_2$  & $e_1e_2= e_4$ & $e_1e_3= \alpha e_5$ \\ & & $e_2 e_1= e_4$  & $e_3 e_1= (\alpha+1)e_5$\\

${\rm N}_{170}$ & $:$& $e_1 e_1 = e_2$  & $e_1e_2= e_4$ & $e_2 e_1= e_4$ \\ & & $e_3 e_1= e_5$ & $e_3 e_3= e_5$\\

${\rm N}_{171}$ & $:$& $e_1 e_1 = e_2$  & $e_1e_2= e_4$ & $e_1 e_3= e_5$ \\ & & $e_2 e_1= e_4$ & $e_3 e_1= e_4+ e_5$ \\

${\rm N}_{172}$ & $:$& $e_1 e_1 = e_2$  & $e_1e_2= e_4$ & $e_1 e_3= e_5$ \\ & & $e_2 e_1= e_4$ & $e_3 e_1= e_4+ e_5$ & $e_3 e_3= e_4$ \\

${\rm N}_{173}$ & $:$& $e_1 e_2 = e_3$  & $e_1e_3=e_4$  & $e_2 e_3= e_5$  & $e_3e_2=-e_5$  \\

${\rm N}_{174}$ & $:$ & $e_1 e_1 = e_5$  & $e_1 e_2 = e_3$  & $e_1e_3=e_4$  \\
& & $e_2 e_3= e_5$  & $e_3e_2=-e_5$  \\

${\rm N}_{175}$ & $:$ &  $e_1 e_2 = e_3$  & $e_1e_3=e_4$ & $e_2 e_2 = e_5$  \\
& &  $e_2 e_3= e_5$  & $e_3e_2=-e_5$  \\

${\rm N}_{176}$ & $:$ & $e_1 e_1 = e_5$ &  $e_1 e_2 = e_3$  & $e_1e_3=e_4$ \\ & & $e_2 e_2 = e_5$  &  $e_2 e_3= e_5$  & $e_3e_2=-e_5$  \\

${\rm N}_{177}$ & $:$ &  $e_1 e_2 = e_3$  & $e_1e_3=e_4$ & $e_2 e_2 = e_4$  \\
& &  $e_2 e_3= e_5$  & $e_3e_2=-e_5$  \\

${\rm N}_{178}$ & $:$ & $e_1 e_1 = e_5$ &  $e_1 e_2 = e_3$  & $e_1e_3=e_4$ \\ & & $e_2 e_2 = e_4$  &  $e_2 e_3= e_5$  & $e_3e_2=-e_5$  \\

${\rm N}_{179}^{\alpha}$ & $:$ & $e_1 e_1 = \alpha e_5$ &  $e_1 e_2 = e_3$  & $e_1e_3=e_4$ \\ & & $e_2 e_2 = e_4 + e_5$  &  $e_2 e_3= e_5$  & $e_3e_2=-e_5$  \\

${\rm N}_{180}^{\alpha}$ & $:$ & $e_1 e_2 = e_3$  & $e_1e_3=e_4$
& $e_2 e_1 = e_4$ &
\\ & & $e_2 e_2 = \alpha e_5$  &  $e_2 e_3= e_5$  & $e_3e_2=-e_5$  \\

${\rm N}_{181}^{\alpha}$ & $:$ &  $e_1 e_1 = e_5$ & $e_1 e_2 = e_3$  & $e_1e_3=e_4$
& $e_2 e_1 = e_4$  
\\ & & $e_2 e_2 = \alpha e_5$  &  $e_2 e_3= e_5$  & $e_3e_2=-e_5$  \\

${\rm N}_{182}^{\alpha, \beta}$ & $:$ &  $e_1 e_1 = \alpha e_5$ & $e_1 e_2 = e_3$  & $e_1e_3=e_4$
& $e_2 e_1 = e_4$  
\\ & & $e_2 e_2 = e_4+ \beta e_5$  &  $e_2 e_3= e_5$  & $e_3e_2=-e_5$  \\

${\rm N}_{183}$ & $:$ & $e_1 e_1 = e_4$  & $e_1 e_2 = e_3$  & $e_1e_3=e_4$   \\
&& $e_2 e_3= e_5$  & $e_3e_2=-e_5$  \\

${\rm N}_{184}$ & $:$ & $e_1 e_1 = e_4$  & $e_1 e_2 = e_3$  & $e_1e_3=e_4$ \\ & & $e_2 e_2= e_5$ & $e_2 e_3= e_5$  & $e_3e_2=-e_5$  \\

${\rm N}_{185}^{\alpha}$ & $:$ & $e_1 e_1 = e_4+e_5$  & $e_1 e_2 = e_3$  & $e_1e_3=e_4$ \\ & & $e_2 e_2= \alpha e_5$ & $e_2 e_3= e_5$  & $e_3e_2=-e_5$  \\

${\rm N}_{186}^{\alpha, \beta}$ & $:$ & $e_1 e_1 = e_4+\alpha e_5$  & $e_1 e_2 = e_3$  & $e_1e_3=e_4$ \\ & & $e_2 e_2= e_4+ \beta e_5$ & $e_2 e_3= e_5$  & $e_3e_2=-e_5$  \\

${\rm N}_{187}^{\alpha, \beta, \gamma}$ & $:$ & $e_1 e_1 = e_4+\alpha e_5$  & $e_1 e_2 = e_3$  & $e_1e_3=e_4$ & $e_2e_1=e_4$ \\ & & $e_2 e_2= \beta e_4+  \gamma e_5$ & $e_2 e_3= e_5$  & $e_3e_2=-e_5$  \\

${\rm N}_{188}$ & $:$& $e_1 e_2 = e_3$  & $e_1e_3=e_4$  & $e_2 e_2= e_5$  \\

${\rm N}_{189}$ & $:$ & $e_1 e_1 = e_5$ & $e_1 e_2 = e_3$  & $e_1e_3=e_4$  & $e_2 e_2= e_5$  \\

${\rm N}_{190}$ & $:$ & $e_1 e_1 = e_5$ & $e_1 e_2 = e_3$  & $e_1e_3=e_4$  \\
&& $e_2 e_1= e_5$ & $e_2 e_2= e_5$  \\

${\rm N}_{191}^{\alpha, \beta}$ & $:$ & $e_1 e_1 = \alpha e_5$ & $e_1 e_2 = e_3$  & $e_1e_3=e_4$ & $e_2 e_1= \beta e_5$ \\ & & $e_2 e_2= e_5$ & $e_2 e_3= e_4$ & $e_3 e_2= - e_4$  \\

${\rm N}_{192}$ & $:$& $e_1 e_2 = e_3$  & $e_1e_3=e_4$ & $e_2 e_1= e_4$ & $e_2 e_2= e_5$  \\

${\rm N}_{193}$ & $:$& $e_1 e_1 = e_5$ & $e_1 e_2 = e_3$  & $e_1e_3=e_4$ \\
& & $e_2 e_1= e_4$ & $e_2 e_2= e_5$  \\

${\rm N}_{194}^{\alpha}$ & $:$& $e_1 e_1 = \alpha e_5$ & $e_1 e_2 = e_3$  & $e_1e_3=e_4$  \\
&& $e_2 e_1= e_4+e_5$ & $e_2 e_2= e_5$  \\

${\rm N}_{195}^{\alpha, \beta }$ & $:$& $e_1 e_1 = \alpha e_5$ & $e_1 e_2 = e_3$  & $e_1e_3=e_4$ & $e_2 e_1= e_4+\beta e_5$ \\ & & $e_2 e_2= e_5$ & $e_2 e_3= e_4$ & $e_3 e_2= - e_4$ \\



${\rm N}_{196}$ & $:$& $e_1e_1=e_5$ & $e_1 e_2 = e_3$  & $e_1e_3=e_4$  & $e_2 e_1= e_5$   \\

${\rm N}_{197}^{\alpha}$ & $:$& $e_1e_1= \alpha e_5$ & $e_1 e_2 = e_3$  & $e_1e_3=e_4$ \\ & & $e_2 e_1= e_5$  & $e_2 e_3= e_4$ & $e_3 e_2= - e_4$ \\

${\rm N}_{198}^{\alpha}$ & $:$& $e_1e_1= \alpha e_5$ & $e_1 e_2 = e_3$  & $e_1e_3=e_4$ & $e_2 e_1= e_5$ \\ &  & $e_2 e_2= e_4$ & $e_2 e_3= e_4$ & $e_3 e_2= - e_4$ \\

${\rm N}_{199}^{\alpha}$ & $:$& $e_1e_1= \alpha e_5$ & $e_1 e_2 = e_3$  & $e_1e_3=e_4$ & $e_2 e_1= e_4+e_5$ \\ &  & $e_2 e_2= e_4$ & $e_2 e_3= e_4$ & $e_3 e_2= - e_4$ \\

${\rm N}_{200}$ & $:$& $e_1 e_1 = e_5$ & $e_1 e_2 = e_3$  & $e_1e_3=e_4$ \\

${\rm N}_{201}$ & $:$& $e_1 e_1 = e_5$ & $e_1 e_2 = e_3$  & $e_1e_3=e_4$ & $e_2e_1=e_4$\\

${\rm N}_{202}$ & $:$& $e_1 e_1 = e_5$ & $e_1 e_2 = e_3$  & $e_1e_3=e_4$ & $e_2e_2=e_4$\\

${\rm N}_{203}$ & $:$& $e_1 e_1 = e_5$ & $e_1 e_2 = e_3$  & $e_1e_3=e_4$  \\
&& $e_2e_1=e_4$ & $e_2e_2=e_4$\\

${\rm N}_{204}$ & $:$& $e_1 e_1 = e_5$ & $e_1 e_2 = e_3$  & $e_1e_3=e_4$  \\
&& $e_2e_3=e_4$ & $e_3e_2= - e_4$\\

${\rm N}_{205}$ & $:$& $e_1 e_1 = e_5$ & $e_1 e_2 = e_3$  & $e_1e_3=e_4$ \\ & &  $e_2e_2=e_4$  & $e_2e_3=e_4$ & $e_3e_2= - e_4$\\

${\rm N}_{206}$ & $:$& $e_1 e_2 = e_3$  & $e_2e_2=e_5$  & $e_2e_3=e_4$ & $e_3e_2= - e_4$\\

${\rm N}_{207}$ & $:$& $e_1 e_1 = e_5$ & $e_1 e_2 = e_3$  & $e_2e_2=e_5$  \\
& & $e_2e_3=e_4$ & $e_3e_2= - e_4$\\

${\rm N}_{208}^{\alpha}$ & $:$& $e_1 e_1 = \alpha e_5$ & $e_1 e_2 = e_3$  & $e_2e_1=e_5$ \\ &  & $e_2e_2=e_5$  & $e_2e_3=e_4$ & $e_3e_2= - e_4$\\


${\rm N}_{209}$ & $:$& $e_1 e_1 = e_4+e_5$ & $e_1 e_2 = e_3$  & $e_2e_2=e_5$   \\
&& $e_2e_3=e_4$ & $e_3e_2= - e_4$\\

${\rm N}_{210}^{\alpha}$ & $:$& $e_1 e_1 = e_4+\alpha e_5$ & $e_1 e_2 = e_3$  & $e_2e_1=e_5$ \\ & & $e_2e_2=e_5$  & $e_2e_3=e_4$ & $e_3e_2= - e_4$\\

${\rm N}_{211}$ & $:$& $e_1 e_2 = e_3$  & $e_2e_1=e_5$  & $e_2e_3=e_4$ & $e_3e_2= - e_4$\\

${\rm N}_{212}$ & $:$& $e_1 e_1 = e_4$ & $e_1 e_2 = e_3$  & $e_2e_1=e_5$  \\
& & $e_2e_3=e_4$ & $e_3e_2= - e_4$\\

${\rm N}_{213}$ & $:$& $e_1 e_1 = e_5$ & $e_1 e_2 = e_3$  & $e_2e_1=e_5$   \\
&& $e_2e_3=e_4$ & $e_3e_2= - e_4$\\

${\rm N}_{214}$ & $:$& $e_1 e_2 = e_3$  & $e_2e_1=e_5$  & $e_2e_2=e_4$   \\
&& $e_2e_3=e_4$ & $e_3e_2= - e_4$\\

${\rm N}_{215}$ & $:$& $e_1 e_1 = e_4$ & $e_1 e_2 = e_3$  & $e_2e_1=e_5$  \\ & & $e_2e_2=e_4$  & $e_2e_3=e_4$ & $e_3e_2= - e_4$\\

${\rm N}_{216}$ & $:$& $e_1 e_1 = e_5$ & $e_1 e_2 = e_3$  & $e_2e_1=e_5$  \\ & & $e_2e_2=e_4$  & $e_2e_3=e_4$ & $e_3e_2= - e_4$\\

${\rm N}_{217}$ & $:$& $e_1 e_1 = e_5$ & $e_1 e_2 = e_3$  &  $e_2e_3=e_4$ & $e_3e_2= - e_4$\\

${\rm N}_{218}$ & $:$& $e_1 e_1 = e_5$ & $e_1 e_2 = e_3$ & $e_2e_2=e_4$  \\
&&  $e_2e_3=e_4$ & $e_3e_2= - e_4$\\

\end{longtable}}

Note that 

\begin{center}

${\rm N}_{12}^{\frac 1 4} \simeq {\rm N}_{11}^{\frac 1 4},$
${\rm N}_{16}^{\alpha, \beta} \simeq {\rm N}_{16}^{\beta, \alpha},$
${\rm N}_{35}^{\alpha} \simeq {\rm N}_{35}^{-\alpha},$
${\rm N}_{37}^{\alpha} \simeq {\rm N}_{37}^{-\alpha},$
${\rm N}_{42}^{\alpha,\beta} \simeq {\rm N}_{42}^{\alpha,-\beta},$
${\rm N}_{60}^{0,0} \simeq {\rm N}_{213}^{0},$
${\rm N}_{61}^{0,0, -1} \simeq {\rm N}_{210}^{0},$
${\rm N}_{61}^{0,0,  \gamma \neq -1 } \simeq {\rm N}_{191}^{0, -\frac {1}{\gamma+1}},$
${\rm N}_{63}^{0} \simeq {\rm N}_{194}^{0},$
${\rm N}_{64}^{0, 0} \simeq {\rm N}_{150}^{-1},$  
${\rm N}_{87}^{\alpha} \simeq {\rm N}_{87}^{\frac 1 {\alpha}},  $  
${\rm N}_{88}^{\alpha,\beta} \simeq {\rm N}_{88}^{\beta,\alpha},$
${\rm N}_{91}^{0} \simeq {\rm N}_{197}^{0},$ ${\rm N}_{92}^{0} \simeq {\rm }_{198}^{0},$
${\rm N}_{94}^{\alpha} \simeq {\rm N}_{94}^{\frac{1}{\alpha}},$ 
${\rm N}_{96}^{\alpha} \simeq {\rm N}_{96}^{\frac{1}{2+\alpha}},$ 
${\rm N}_{98}^{\alpha,\beta,\gamma} \simeq {\rm N}_{98}^{\frac{2+\beta}{2+\gamma},\frac{\alpha-2(2+\gamma)}{2+\gamma},-\frac{3+2\gamma}{2+\gamma}},$
${\rm N}_{111}^{-\frac 1 2} \simeq {\rm N}_{112}^{-\frac 1 2},$ 

${\rm N}_{125}^{1}$ and ${\rm N}_{126}^{1}$ are commutative-associative algebras,

${\rm N}_{126}^{0}$ is a split algebra.

\end{center}

\end{theoremA}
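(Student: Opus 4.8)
The plan is to realize every such ${\mathbb N}$ as a central extension of a lower-dimensional nilpotent Novikov algebra with nonzero product, and then to run the Procedure over a finite list of base algebras. First I would note that since ${\mathbb N}$ is non-split its annihilator is nonzero, so by the first Lemma one has ${\mathbb N}\cong {\bf A}'_{\theta}$ for a uniquely determined nilpotent Novikov algebra ${\bf A}'$ of dimension $5-\dim\operatorname{Ann}({\mathbb N})$ and a cocycle $\theta\in{\rm Z}^2({\bf A}',{\mathbb V})$ with $\operatorname{Ann}({\bf A}')\cap\operatorname{Ann}(\theta)=0$; moreover ${\bf A}'\cong {\mathbb N}/\operatorname{Ann}({\mathbb N})$ is nilpotent, has nonzero product (else ${\mathbb N}$ would be $2$-step nilpotent), and is neither one-generated nor commutative (else ${\mathbb N}$ would inherit these properties). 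A count of generators together with a comparison of nilpotency steps shows that $\dim\operatorname{Ann}({\mathbb N})\in\{1,2\}$ and restricts ${\bf A}'$ to one of three kinds: a $4$-dimensional $2$-step nilpotent Novikov algebra, a two-generated $4$-dimensional $3$-step nilpotent Novikov algebra (both giving a $1$-dimensional central extension), or a two-generated $3$-dimensional nilpotent Novikov algebra (giving a $2$-dimensional central extension). The classifications of these base algebras are available from the cited literature and are recalled at the beginning of each subsection.

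For each base algebra ${\bf A}'$ I would then carry out the three steps of the Procedure: determine ${\rm H}^2({\bf A}',{\mathbb C})$ with an explicit basis of classes $\nabla_1,\nabla_2,\dots$; determine $\operatorname{Aut}({\bf A}')$ as an explicit group of matrices; compute the induced action of $\operatorname{Aut}({\bf A}')$ on $\langle\sum_i\alpha_i\nabla_i\rangle$ by evaluating $\phi^{T}\,(\cdot)\,\phi$ on the associated bilinear form; and enumerate the $\operatorname{Aut}({\bf A}')$-orbits on ${\bf T}_s({\bf A}')$ (with $s=1$ in the two $4$-dimensional cases and $s=2$ in the $3$-dimensional case), restricting throughout to cocycles whose annihilator meets $\operatorname{Ann}({\bf A}')$ trivially and whose classes are linearly independent in ${\rm H}^2$ (so that the extension has no annihilator component), and discarding those that produce one-generated or commutative algebras. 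Each orbit representative yields one multiplication table, and by the $\operatorname{Orb}$-invariance Lemma distinct orbits of a fixed ${\bf A}'$ give non-isomorphic $5$-dimensional algebras. This step is the bulk of the argument, but it is a finite and largely mechanical sequence of case analyses of exactly the form already carried out for ${\mathfrak N}_{01},\dots,{\mathfrak N}_{15}$, for the algebras ${\mathcal N}^4_i$, and for ${\mathcal N}^{3*}_{01}$ and ${\mathcal N}^{3*}_{04}(0)$.

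The genuinely delicate point is the final consolidation across different base algebras: one $5$-dimensional algebra may arise as a central extension of several of the ${\bf A}'$, and there may be coincidences between distinct parameter families as well as overlaps with the split, one-generated and commutative algebras excluded by hypothesis. I would resolve this by first separating the accumulated representatives using isomorphism invariants — $\dim\operatorname{Ann}({\mathbb N})$, the dimensions of ${\mathbb N}^2$ and $({\mathbb N}^2)^2$, the nilpotency step, the number of generators, and the linear-algebraic type of the left and right multiplication operators — and then disposing of the finitely many remaining ambiguities by explicit changes of basis, thereby obtaining all identifications (for instance ${\rm N}_{60}^{0,0}\simeq{\rm N}_{213}^{0}$, ${\rm N}_{64}^{0,0}\simeq{\rm N}_{150}^{-1}$, the splitting of ${\rm N}_{126}^{0}$, the commutativity of ${\rm N}_{125}^{1}$ and ${\rm N}_{126}^{1}$, and parameter symmetries such as ${\rm N}_{16}^{\alpha,\beta}\simeq{\rm N}_{16}^{\beta,\alpha}$, ${\rm N}_{87}^{\alpha}\simeq{\rm N}_{87}^{1/\alpha}$ and ${\rm N}_{98}^{\alpha,\beta,\gamma}\simeq{\rm N}_{98}^{(2+\beta)/(2+\gamma),(\alpha-2(2+\gamma))/(2+\gamma),-(3+2\gamma)/(2+\gamma)}$), and confirming that the stated parameter ranges enumerate each isomorphism class exactly once. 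I expect this deduplication, rather than any individual orbit computation, to be the main obstacle; once it is completed, the algebras listed in the statement are precisely those produced, which is the assertion of Theorem~A.
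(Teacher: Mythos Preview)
Your overall strategy is exactly the paper's: reduce to central extensions via the Skjelbred--Sund procedure, split into $1$-dimensional extensions of $4$-dimensional bases and $2$-dimensional extensions of $3$-dimensional bases, run the orbit analysis, and then deduplicate. The breakdown $\dim\operatorname{Ann}({\mathbb N})\in\{1,2\}$ and the three families of base algebras are correct, and so is the observation that for a non-split extension the minimal number of generators of ${\mathbb N}$ equals that of ${\bf A}'$.

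There is, however, one genuine error in your reduction. You assert that ${\bf A}'$ must be non-commutative ``else ${\mathbb N}$ would inherit'' commutativity. This implication is false: in ${\mathbb N}={\bf A}'_{\theta}$ the product is $xy+\theta(x,y)$, so if ${\bf A}'$ is commutative but $\theta$ is not symmetric then ${\mathbb N}$ is non-commutative. The paper does \emph{not} exclude commutative bases; on the contrary, several of them contribute large portions of the final list. For instance ${\mathfrak N}_{01}$ ($e_1e_1=e_2$) and ${\mathfrak N}_{02}$ ($e_1e_1=e_3$, $e_2e_2=e_4$) are commutative and yield ${\rm N}_{01}$--${\rm N}_{16}$; ${\mathcal N}^4_{02}(1)$ is commutative and yields ${\rm N}_{127}$--${\rm N}_{130}$; ${\mathcal N}^{3*}_{01}$ is commutative and yields ${\rm N}_{148}$--${\rm N}_{172}$. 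What the paper does in these cases is record ${\rm H}^2_{com}$ separately and then work with cocycles having a nontrivial component in the complement, so that the resulting extension is non-commutative. If you actually discarded commutative ${\bf A}'$ as stated, you would miss roughly a quarter of the list. (Your later reference to the computations ``already carried out for \dots\ ${\mathcal N}^{3*}_{01}$'' suggests you may not intend this exclusion in practice, but the stated argument needs correcting.)

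A minor remark: your expectation that deduplication is the main obstacle is the reverse of the paper's reality. The cross-base identifications are few and are disposed of in a short ``Note that'' list; the orbit computations for each base algebra, by contrast, occupy the bulk of the work and are where the care is needed.
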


\end{document}